\numberwithin{equation}{section}
\def\today{\number\day\space\ifcase\month\or   January\or February\or
   March\or April\or May\or June\or   July\or August\or September\or
   October\or November\or December\fi\   \number\year}
\theoremstyle{definition}
\newtheorem{thm}{Theorem}[section]
\newtheorem{lem}[thm]{Lemma}
\newtheorem{prp}[thm]{Proposition}
\newtheorem{dfn}[thm]{Definition}
\newtheorem{cor}[thm]{Corollary}
\newtheorem{rmk}[thm]{Remark}
\newtheorem{ntn}[thm]{Notation}
\newtheorem{exa}[thm]{Example}
\newtheorem{qst}[thm]{Question}
\newcommand{\beq}{\begin{equation}}
\newcommand{\eeq}{\end{equation}}
\newcommand{\beqr}{\begin{eqnarray*}}
\newcommand{\eeqr}{\end{eqnarray*}}
\newcommand{\bal}{\begin{align*}}
\newcommand{\eal}{\end{align*}}
\newcommand{\bei}{\begin{itemize}}
\newcommand{\eei}{\end{itemize}}
\newcommand{\limi}[1]{\lim_{{#1} \to \infty}}
\newcommand{\af}{\alpha}
\newcommand{\bt}{\beta}
\newcommand{\gm}{\gamma}
\newcommand{\dt}{\delta}
\newcommand{\ep}{\varepsilon}
\newcommand{\zt}{\zeta}
\newcommand{\et}{\eta}
\newcommand{\io}{\iota}
\newcommand{\te}{\theta}
\newcommand{\ld}{\lambda}
\newcommand{\sm}{\sigma}
\newcommand{\ph}{\varphi}
\newcommand{\ps}{\psi}
\newcommand{\rh}{\rho}
\newcommand{\om}{\omega}
\newcommand{\ta}{\tau}
\newcommand{\Dt}{\Delta}
\newcommand{\Ph}{\Phi}
\newcommand{\Q}{{\mathbb{Q}}}
\newcommand{\Z}{{\mathbb{Z}}}
\newcommand{\R}{{\mathbb{R}}}
\newcommand{\C}{{\mathbb{C}}}
\newcommand{\N}{{\mathbb{Z}}_{> 0}}
\newcommand{\Nz}{{\mathbb{Z}}_{\geq 0}}
\newcommand{\spn}{{\operatorname{span}}}
\newcommand{\Isom}{{\operatorname{Isom}}}
\newcommand{\cF}{{\mathcal{F}}}
\newcommand{\andeqn}{\,\,\,\,\,\, {\mbox{and}} \,\,\,\,\,\,}
\newcommand{\Wolog}{Without loss of generality}
\newcommand{\ifo}{if and only if}
\newcommand{\ca}{C*-algebra}
\newcommand{\hm}{homomorphism}
\newcommand{\ct}{continuous}
\newcommand{\hme}{homeomorphism}
\renewcommand{\S}{\subset}
\newcommand{\ov}{\overline}
\newcommand{\SM}{\setminus}
\newcommand{\I}{\infty}
\newcommand{\Def}[1]{Definition~\ref{#1}}
\title[Reduced group Banach algebras]{Simplicity
 of reduced group Banach algebras}
\author{N.~Christopher Phillips}
\date{24~September 2019}
\address{Department of Mathematics, University  of Oregon,
       Eugene OR 97403-1222, USA.}
\email[]{ncp@darkwing.uoregon.edu}
\subjclass[2000]{Primary 22D15, 46H20;
 Secondary 22D12, 43A15.}
\thanks{This material is based upon work supported by the
  US National Science Foundation under
  Grant DMS-1501144 and by the Simons Foundation Collaboration Grant
  for Mathematicians \#587103.}
\begin{document}

\begin{abstract}
Let $G$ be a discrete group.
Suppose that the reduced group C*-algebra $C^*_{\mathrm{r}} (G)$
is simple.
We use results of Kalantar-Kennedy and Haagerup,
and Banach space interpolation,
to prove that, for $p \in (1, \infty)$,
the reduced group $L^p$~operator algebra $F^p_{\mathrm{r}} (G)$
and its *-analog $B^{p, *}_{\mathrm{r}} (G)$ are simple.
If $G$ is countable, we prove that the Banach algebras
generated by the left regular representations
on reflexive Orlicz sequence spaces
and certain Lorentz sequence spaces are also simple.
We prove analogous results with simplicity
replaced by the unique trace property.
For use in the Orlicz sequence space case,
we prove that if $p \in (1, \I)$,
then any reflexive Orlicz sequence space is isomorphic
(not necessarily isometrically)
to a space gotten by interpolation
between $l^p$ and some other Orlicz sequence space.
\end{abstract}

\maketitle

\indent
We use Banach space interpolation
to show that the recent Kalantar-Kennedy and Haagerup
results on simplicity
of the reduced \ca{} of a group~$G$
also imply, for $p \in (1, \infty)$,
simplicity of its $L^p$~analog $F^p_{\mathrm{r}} (G)$
as in~\cite{Ph-Lp3},
as well as simplicity
of the $*$-algebra relative $B^{p, *}_{\mathrm{r}} (G)$
defined by Liao and Yu in~\cite{LiYu}.
(The algebra $F^1_{\mathrm{r}} (G)$ is never simple
unless $G$ has only one element.)
We further show that if $C^*_{\mathrm{r}} (G)$
has a unique tracial state,
then $F^p_{\mathrm{r}} (G)$ and $B^{p, *}_{\mathrm{r}} (G)$
each have a unique unital trace.

This paper was written in response to a question of Guoliang Yu,
about
simplicity of $F^p_{\mathrm{r}} (G)$ and $B^{p, *}_{\mathrm{r}} (G)$.
As will be seen, it is very easy to prove that
simplicity of $C^*_{\mathrm{r}} (G)$
implies simplicity of $F^p_{\mathrm{r}} (G)$,
and only slightly harder to handle $B^{p, *}_{\mathrm{r}} (G)$.
We put the problem in a general framework,
which can be used to prove simplicity
and the unique trace property for algebras
obtained from regular representations on many other Banach spaces.
We don't give a thorough investigation;
rather, we examine two kinds of examples,
Orlicz sequence spaces (as in Sections 4.a--4.c of~\cite{LndTzf1})
and Lorentz sequence spaces (as in Section~4.e of~\cite{LndTzf1}).
In both cases,
we assume for convenience that $G$ is countable.
For each of these spaces,
permutations of $\N$ define isometric operators on the space,
so we can index the sequences by $G$ instead of~$\N$,
define a left regular representation of $G$ using
isometric operators on the space,
and consider the Banach algebra generated by this representation.
For reflexive Orlicz sequence spaces,
and for the Lorentz spaces
$l^{p, r} (G)$
(analogs of the more commonly used spaces $L^{p, r} (\R^n)$)
when $1 < r < p < \I$,
we prove that simplicity of $C^*_{\mathrm{r}} (G)$
implies simplicity of the Banach algebras generated by
the left regular representations
on these spaces,
and similarly for the unique trace property.

In outline,
interpolation starts with two Banach spaces
(or, in a generality we don't use, complete quasinormed
vector spaces) $E_0$ and $E_1$
with a common dense subspace~$E$,
and constructs interpolated spaces $E_{\te}$ for $\te \in (0, 1)$.
The basic example is $E_j = L^{p_j} (X, \mu)$,
with $E_{\te} = L^{p_{\te}} (X, \mu)$ for suitable~$p_{\te}$,
determined by
\[
\frac{1}{p_{\te}} = \frac{1 - \te}{p_0} + \frac{\te}{p_1}.
\]
Moreover, if $(F_0, F_1)$ is another such pair,
with common dense subspace~$F$,
and $T \colon E \to F$ is linear
(sometimes, in generality we don't need,
satisfying weaker conditions,
such as quasilinearity on a larger space)
and extends to bounded linear operators
$T_j \colon E_j \to F_j$ for $j = 0, 1$,
then $T$ extends to bounded linear operators
$T_{\te} \colon E_{\te} \to F_{\te}$ for $\te \in (0, 1)$,
with estimates on $\| T_{\te} \|$
(different for different interpolation theorems).
For example,
in the Riesz-Thorin Interpolation Theorem
(Theorem 6.27 of~\cite{Fld2}),
one gets
\begin{equation}\label{Eq_9910_RT}
\| T_{\te} \| \leq \| T_{0} \|^{1 - \te} \| T_{1} \|^{\te}.
\end{equation}
Other interpolation theorems have different estimates.

Interpolation can be used to prove simplicity
via the condition we abstract as the Powers property
(Definition~\ref{D_8X29_Powers};
the connection with simplicity is given in
Theorem~\ref{T_8Y03_Simple}, from~\cite{Hgrp},
and Proposition~\ref{L_6Z16_PPImpSimple},
a general form of a standard argument,
originally due to Powers~\cite{Pwr}).
This property
asserts the existence of certain convex combinations
of images of the group elements which have small norm.
Since we consider representations via isometries,
such convex combinations always have norm at most~$1$.
An estimate of the form~(\ref{Eq_9910_RT})
is thus useful as long as the Powers property
holds at one endpoint.

The description above is nearly a complete proof
that simplicity of $C^*_{\mathrm{r}} (G)$ and $p \in (1, \infty)$
imply simplicity of $F^p_{\mathrm{r}} (G)$:
for example, for $p \in (1, 2)$
one interpolates the Powers property
between $p = 1$ and $p = 2$.
For $B^{p, *}_{\mathrm{r}} (G)$
one needs to work a little harder,
because the norm is more complicated.
For Orlicz sequence spaces and $l^{p, r} (G)$,
we use known interpolation theorems
in which the estimates are not quite as good as~(\ref{Eq_9910_RT})
but still good enough.
The Lorentz spaces $l^{p, r} (G)$ we use are particular examples of
Lorentz sequence spaces,
and the result should be true
for much more general Lorentz sequence spaces.
This seems to require going beyond the well known
interpolation theorems;
since our purpose is just to exhibit possibilities,
we don't investigate further.
For Orlicz sequence spaces,
we can use a well known interpolation theorem,
but we need significant work to produce a space to use in this theorem.
We prove that if $p \in (1, \I)$,
then any reflexive Orlicz sequence space is isomorphic
(not necessarily isometrically)
to a space gotten by interpolation
between $l^p$ and some other Orlicz sequence space.
The proof of this fact seems to require a direct construction.

The Orlicz sequence space result actually implies the results
for $F^p_{\mathrm{r}} (G)$,
so, in principle, the corresponding parts
of Sections \ref{Sec_Simp} and~\ref{Sec_Uniq} could be omitted.
However, the proofs given in those sections are much simpler
and are a good illustration of the general method,
and the Orlicz sequence space results
do not help with the algebras $B^{p, *}_{\mathrm{r}} (G)$.
The results on Lorentz spaces do depend on the results
for $F^p_{\mathrm{r}} (G)$.

The unique trace property is handled similarly,
using a different version of the Powers property;
see Definition~\ref{D_8Y02_gPowers}.

We do not address the reverse implications.
For example,
interpolation can be used to show that
if $F^p_{\mathrm{r}} (G)$ has the Powers property
for some $p \in (1, \I)$,
then $C^*_{\mathrm{r}} (G)$ has the Powers property
and is therefore simple.
However, we do not know whether simplicity of $F^p_{\mathrm{r}} (G)$
implies the Powers property,
and similarly for the other algebras we consider.

This paper is organized as follows.
In Section~\ref{Sec_ReducedGpAlgs}, we define
a reduced group Banach algebra for a group~$G$;
this is the general framework we use.
The definition is independent of any representation of~$G$
on a Banach space.
We then give the basic examples:
$C^*_{\mathrm{r}} (G)$,
$F^p_{\mathrm{r}} (G)$, and $B^{p, *}_{\mathrm{r}} (G)$.
In Section~\ref{Sec_Simp},
we define the Powers property for a reduced group Banach algebra,
show that it implies simplicity of the algebra,
and prove that if $C^*_{\mathrm{r}} (G)$ is simple
then so are $F^p_{\mathrm{r}} (G)$ and $B^{p, *}_{\mathrm{r}} (G)$.
Section~\ref{Sec_Uniq} is the analog for the unique trace property.
Readers interested
only in $F^p_{\mathrm{r}} (G)$ and $B^{p, *}_{\mathrm{r}} (G)$
can stop here.
In Section~\ref{Sec_Orlicz},
we introduce reduced group algebras on Orlicz sequence spaces,
and prove the analogous simplicity and unique trace results.
Most of the proof of the existence of a suitable space
to use in the interpolation argument
is postponed to Section~\ref{Sec_OrliczInt}.
Section~\ref{Sec_Other}
contains the results
on reduced group algebras on Lorentz sequence spaces.

All groups will be assumed discrete.
All Banach algebras are over~$\C$.
We will use the following terminology and notation.

\begin{dfn}\label{D_8X29_Trace}
Let $A$ be a unital Banach algebra.
A {\emph{unital trace}} on~$A$
is a \ct{} linear functional $\ta \colon A \to \C$
such that $\ta (b a) = \ta (a b)$ for all $a, b \in A$
and $\ta (1) = 1$.
\end{dfn}

A linear functional $\om$ on a unital Banach algebra
is called a state if $\| \om \| = 1$ and $\om (1) = 1$.
So a tracial state is a unital trace.
We don't need to require our traces to have norm~$1$,
only that they be bounded.

\begin{ntn}\label{N_8X29_Isometries}
Let $A$ be a unital Banach algebra.
Then we denote by $\Isom (A)$
the group of invertible isometries in~$A$,
that is,
\[
\Isom (A)
 = \bigl\{ s \in A \colon {\mbox{$s$ is invertible, $\| s \| = 1$,
      and $\| s^{-1} \| = 1$}} \bigr\}.
\]
\end{ntn}

We will use both adjoints and Banach space duality
(for operators as well as for spaces).
To distinguish them, we use the following convention.

\begin{ntn}\label{N_8Y03_Duality}
If $E$ is a Banach space,
we denote its dual by~$E'$.
If $F$ is another Banach space and $a \in L (E, F)$,
we let $a' \in L (F', E')$
be the dual (transpose) operator,
given by $a' (\om) (\xi) = \om (a \xi)$
for $\om \in F'$ and $\xi \in E$.
For $p \in [1, \I)$ and with $q \in (1, \I]$
chosen so that $\frac{1}{p} + \frac{1}{q} = 1$,
for any set $S$
we identify $l^p (S)'$ with $l^q (S)$ in the standard way.
\end{ntn}

We thank Marcin Bownik and Bill Johnson for useful answers
to the question,
``What other interesting Banach spaces of sequences are there?''
We also thank Sanaz Pooya for helpful comments
on earlier drafts of this paper.

\section{Reduced group Banach algebras}\label{Sec_ReducedGpAlgs}

In this section,
we give a general framework which covers
both $F^p_{\mathrm{r}} (G)$ and $B^{p, *}_{\mathrm{r}} (G)$,
as well as examples
constructed from Orlicz sequence and Lorentz sequence spaces.

\begin{dfn}\label{N_8X29_AbstractRedG}
Let $G$ be a group
(taken with the discrete topology).
An {\emph{(abstract) reduced group Banach algebra for~$G$}}
is a triple $(A, w, \ta)$
in which $A$ is a unital Banach algebra,
$w \colon G \to \Isom (A)$
(see Notation~\ref{N_8X29_Isometries})
is a group \hm,
and $\ta \colon A \to \C$
is a unital trace (\Def{D_8X29_Trace}),
such that the following conditions hold:
\begin{enumerate}
%
\item\label{Item_8X29_AbstractRedG_Density}
$A = {\overline{\spn}}
   \bigl( \bigl\{ w (g) \colon g \in G \bigr\} \bigr)$.
\item\label{Item_8X29_AbstractRedG_Inj}
$\ta (w (g)) = 0$ for all $g \in G \setminus \{ 1 \}$.
\item\label{Item_8X29_AbstractRedG_TrFaithful}
If $a \in A$ and $\ta (a w (g)) = 0$ for all $g \in G$,
then $a = 0$.
\end{enumerate}
\end{dfn}

Of course,
to check that $w (g) \in \Isom (A)$ for all $g \in G$,
it suffices to check that $\| w (g) \| \leq 1$ for all $g \in G$.

Condition~(\ref{Item_8X29_AbstractRedG_Density})
is a density condition,
Condition~(\ref{Item_8X29_AbstractRedG_Inj})
is a strong form of injectivity on the \hm~$w$,
and Condition~(\ref{Item_8X29_AbstractRedG_TrFaithful})
says that $\ta$ is faithful in some sense.

The standard example is as follows.

\begin{exa}\label{Ex_9217_CStar}
Let $G$ be a discrete group,
let $w$ be the standard \hm{}
from $G$ to the unitary group of $C^*_{\mathrm{r}} (G)$,
and let $\ta$ be the standard tracial state on $C^*_{\mathrm{r}} (G)$.
Then $\bigl( C^*_{\mathrm{r}} (G), w, \ta \bigr)$
is a reduced group Banach algebra for~$G$.
\end{exa}

The conditions in Definition~\ref{N_8X29_AbstractRedG} are independent.
If $G$ is a discrete group which is not amenable,
then $C^* (G)$,
with the obvious choices of~$w$ and the usual choice of~$\ta$,
satisfies (\ref{Item_8X29_AbstractRedG_Density})
and (\ref{Item_8X29_AbstractRedG_Inj})
but not~(\ref{Item_8X29_AbstractRedG_TrFaithful}).
If $G = \Z$,
$A = \C$,
$w (g) = 1$ for all $g \in G$,
and $\ta \colon A \to \C$
is the identity map,
then $(A, w, \ta)$
satisfies (\ref{Item_8X29_AbstractRedG_Density})
and (\ref{Item_8X29_AbstractRedG_TrFaithful})
but not~(\ref{Item_8X29_AbstractRedG_Inj}).
If $G$ is any ICC group,
then its group von Neumann algebra,
with the obvious choices of $w$ and~$\ta$,
satisfies (\ref{Item_8X29_AbstractRedG_Inj})
and~(\ref{Item_8X29_AbstractRedG_TrFaithful})
but not~(\ref{Item_8X29_AbstractRedG_Density}).

Injectivity of~$w$
does not imply~(\ref{Item_8X29_AbstractRedG_Inj}),
even in the presence of (\ref{Item_8X29_AbstractRedG_Density})
and~(\ref{Item_8X29_AbstractRedG_TrFaithful}):
take $G = \Z$ and $A = \C$,
fix any $\te \in \R \SM \Q$,
and define $w (n) = e^{2 \pi i n \te}$ for $n \in \Z$.

We recall the reduced group $L^p$~operator algebra
$F^p_{\mathrm{r}} (G)$ from~\cite{Ph-Lp3};
it has appeared in earlier work.
For use in interpolation arguments,
we include the case $p = \I$.

\begin{dfn}\label{D_8Y02_FprG}
Let $G$ be a group
(taken with the discrete topology).
Let $\C [G]$ be the usual complex group ring of~$G$,
and write its elements as sums $a = \sum_{g \in G} a_g u_g$
with $a_g \in \C$ for all $g \in G$ and $a_g = 0$
for all but finitely many $g \in G$.

Let $p \in [1, \I]$.
Let $w_p \colon G \to L (l^p (G))$
be the left regular representation of $G$ on $l^p (G)$.
With $\dt_{p, g} \in l^p (G)$ being the standard basis vector
corresponding to $g \in G$,
it is determined by $w_p (g) ( \dt_{p, h}) = \dt_{p, g h}$
for $g, h \in G$.
Let $\rh_p \colon \C [G] \to L (l^p (G))$
be the unital algebra \hm{}
satisfying
\[
\rh_p \Biggl( \sum_{g \in G} a_g u_g \Biggr)
 = \sum_{g \in G} a_g w_p (g)
\]
for $a = \sum_{g \in G} a_g u_g$ as above.
Define
\[
F^p_{\mathrm{r}} (G)
 = {\overline{ \rh_p ( \C [G])}}
 \S L (l^p (G)).
\]
Further define $\ta_p \colon F^p_{\mathrm{r}} (G) \to \C$
by taking $\ta_p (a)$ to be the coordinate of $a (\dt_{p, 1})$
at the identity of the group,
that is,
if $a (\dt_{p, 1}) = (\xi_g)_{g \in G}$,
then $\ta_p (a) = \xi_1$.
\end{dfn}

\begin{rmk}\label{R_9224_CStarR}
If $p = 2$ in Definition~\ref{D_8Y02_FprG},
one gets $C^*_{\mathrm{r}} (G)$
as in Example~\ref{Ex_9217_CStar}.
\end{rmk}

\begin{lem}\label{L_8Y02_FprGIsRedAlg}
Let the notation be as in Definition~\ref{D_8Y02_FprG},
with $p \in [1, \I]$.
Then $( F^p_{\mathrm{r}} (G), \, w_p, \, \ta_p)$
is a reduced group Banach algebra for~$G$.
\end{lem}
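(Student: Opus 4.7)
The plan is to verify, in turn, that $w_p$ takes values in $\Isom(F^p_{\mathrm{r}}(G))$, that $\ta_p$ is a unital trace, and then each of the three conditions of Definition~\ref{N_8X29_AbstractRedG}. Most of this is essentially formal; the one step requiring a genuine idea is condition~(\ref{Item_8X29_AbstractRedG_TrFaithful}).

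Each $w_p(g)$ permutes the standard basis vectors of $l^p(G)$, so it is an invertible isometry with inverse $w_p(g^{-1})$; hence $w_p$ is a group homomorphism into $\Isom(F^p_{\mathrm{r}}(G))$, and $w_p(1) = I$ is the unit of $F^p_{\mathrm{r}}(G)$. Condition~(\ref{Item_8X29_AbstractRedG_Density}) is immediate, as $\rh_p(\C[G]) = \spn \{ w_p(g) : g \in G \}$ is dense in $F^p_{\mathrm{r}}(G)$ by definition. The estimate $|\ta_p(a)| \leq \|a \dt_{p, 1}\|_p \leq \|a\|$ shows that $\ta_p$ is bounded, and $\ta_p(I) = 1$. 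For the trace identity $\ta_p(ab) = \ta_p(ba)$, bilinearity and norm continuity reduce matters to the case $a = w_p(g)$, $b = w_p(h)$, where both sides equal $1$ if $gh = 1$ (equivalently $hg = 1$) and $0$ otherwise. The same computation yields condition~(\ref{Item_8X29_AbstractRedG_Inj}): $\ta_p(w_p(g))$ is the coordinate of $\dt_{p, g}$ at the identity of $G$, which vanishes for $g \neq 1$.

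The substantive step is condition~(\ref{Item_8X29_AbstractRedG_TrFaithful}). For each $s \in G$, the right translation $V_s \in L(l^p(G))$ defined by $(V_s \xi)(h) = \xi(hs)$ is an isometry commuting with every $w_p(g)$, and hence, by norm continuity, with every element of $F^p_{\mathrm{r}}(G)$. Given $a \in F^p_{\mathrm{r}}(G)$, set $\xi = a \dt_{p, 1}$; since $V_{g^{-1}} \dt_{p, 1} = \dt_{p, g}$, one obtains $a \dt_{p, g} = V_{g^{-1}} \xi$, and the identity coordinate of this vector is $\xi(g^{-1})$. Thus $\ta_p(a w_p(g)) = \xi(g^{-1})$, so the hypothesis that this vanishes for all $g \in G$ forces $\xi = 0$, and hence $a$ annihilates every $\dt_{p, g}$. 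For $p \in [1, \I)$, density of $c_{00}(G)$ in $l^p(G)$ now gives $a = 0$. The one case needing extra care is $p = \I$, where $c_{00}(G)$ is merely weak-$*$ dense in $l^{\I}(G)$; here I would use that every element of $F^{\I}_{\mathrm{r}}(G)$ is the Banach space transpose of an element of $F^1_{\mathrm{r}}(G)$, hence is weak-$*$ continuous, to again conclude $a = 0$. This $p = \I$ case is the one place where I expect anything beyond routine extension-by-continuity to be required.
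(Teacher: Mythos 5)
Your proof is correct and follows essentially the same route as the paper's: verify the formal conditions, prove faithfulness by showing that $a$ annihilates every basis vector $\dt_{p, g}$, and handle $p = \I$ by realizing elements of $F^{\I}_{\mathrm{r}} (G)$ as transposes of elements of $F^{1}_{\mathrm{r}} (G)$ and invoking weak* continuity. The only cosmetic difference is that you reduce everything to $\xi = a \dt_{p, 1}$ via the commuting right translations $V_s$, whereas the paper computes the coordinates of $a \dt_{p, h}$ directly from the trace property of $\ta_p$; the two computations are equivalent.
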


For $p \neq \I$,
the algebra $F^p_{\mathrm{r}} (G)$
is in Definition 3.3(2) of~\cite{Ph-Lp3},
with $A$ there taken to be $\C$ with the trivial action.
Most of Lemma~\ref{L_8Y02_FprGIsRedAlg},
under the additional assumption that $G$ is countable,
is then a special case of results in~\cite{Ph-Lp3}.
In particular,
existence and faithfulness of~$\ta$
is a special case of existence and faithfulness
of the standard conditional expectation
$E \colon F^p_{\mathrm{r}} (G, A, \af) \to A$,
as in Definition~4.11,
Proposition~4.8, and Proposition 4.9(1) of~\cite{Ph-Lp3}.
We give a selfcontained proof,
since we will refer to the argument later.

\begin{proof}[Proof of Lemma~\ref{L_8Y02_FprGIsRedAlg}]
It is immediate that $w_p (g)$ is an isometry for all $g \in G$.
The set $F^p_{\mathrm{r}} (G)$
is a Banach algebra because it is a closed subalgebra
of $L (l^p (G))$.

There is a bounded linear functional
$\om \colon l^p (G) \to \C$
such that if $\xi = (\xi_g)_{g \in G} \in l^p$,
then $\om (\xi) = \xi_1$;
moreover, $\| \om \| = 1$.
The functional $\ta_p$ is \ct{}
because it is given by the formula
$\ta_p (a) = \om ( a \dt_{p, 1} )$
for $a \in F^p_{\mathrm{r}} (G)$.
It is obviously unital.
A direct computation shows that
for $g, h \in G$
we have
$\ta_p \bigl( w_p (g) w_p (h) \bigr)
 = \ta_p \bigl( w_p (h) w_p (g) \bigr)$
(it is $1$ if $g = h^{- 1}$ and $0$ otherwise),
and the trace property for~$\ta_p$
follows by linearity and continuity.

Condition~(\ref{Item_8X29_AbstractRedG_Density})
of Definition~\ref{N_8X29_AbstractRedG}
holds by construction,
and Condition~(\ref{Item_8X29_AbstractRedG_Inj})
there is immediate.

For
Condition~(\ref{Item_8X29_AbstractRedG_TrFaithful})
of Definition~\ref{N_8X29_AbstractRedG},
let $a \in F^{p}_{\mathrm{r}} (G)$,
and suppose that $\ta_{p} ( a w_{p} (g) ) = 0$
for all $g \in G$.

We claim that if $h \in G$,
then $a \dt_{p, h} = 0$.
To see this,
write $a \dt_{p, h} = (\et_g)_{g \in G}$.
Then for $g \in G$ we have,
using the trace property for~$\ta_p$ at the fourth step,
\begin{align*}
\et_g
& = \om \bigl( w_{p} (g^{-1}) a \dt_{p, h} \bigr)
\\
& = \om \bigl( w_{p} (g^{-1}) a w_{p} (h) \dt_{p, 1} \bigr)
  = \ta_p \bigl( w_{p} (g^{-1}) a w_{p} (h) \bigr)
  = \ta_p \bigl( a w_{p} (h g^{-1}) \bigr)
  = 0.
\end{align*}
Since this is true for all $g \in G$,
the claim follows.

If $p \neq \I$,
the set
$\{ \dt_{p, h} \colon h \in G \}$
spans a norm dense subspace of $l^p (G)$,
so it follows that $a = 0$.

For $p = \I$,
give $l^{\I} (G)$ the weak* topology it gets
as the dual of $l^1 (G)$.
One checks that if $g \in G$ then
$w_{\I} (g)$ is the dual of the operator
$w_{1} (g^{-1}) \in L (l^1 (G))$.
Since dual operators have the same norm,
it follows that
\[
F^{\I}_{\mathrm{r}} (G)
 = \bigr\{ b' \colon b \in F^{1}_{\mathrm{r}} (G) \bigr\}.
\]
Therefore the operator $a$ above is weak* to weak* continuous.
Since $\{ \dt_{\I, h} \colon h \in G \}$
spans a weak* dense subspace of $l^{\I} (G)$,
we again get $a = 0$.
\end{proof}

The following definition is from the beginning of
Section~2 of~\cite{LiYu}.

\begin{dfn}[\cite{LiYu}]\label{D_8Y02_SAFp}
Let $G$ and $\C [G]$ be as in Definition~\ref{D_8Y02_FprG}.
Let $p \in [1, \I]$,
and let $w_p$ and $\rh_p$ be as in Definition~\ref{D_8Y02_FprG}.

For $a = \sum_{g \in G} a_g u_g \in \C [G]$
as in Definition~\ref{D_8Y02_FprG},
set $a^* = \sum_{g \in G} {\overline{a_g}} u_{g^{-1}}$.
Define an algebra norm
$\| \cdot \|_{p, *}$ on $\C [G]$
by
\[
\| a \|_{p, *}
 = \max \bigl( \| \rh_p (a) \|, \, \| \rh_p (a^*) \| \bigr)
\]
for $a \in \C [G]$.
Let $B^{p, *}_{\mathrm{r}} (G)$ be the completion
of $\C [G]$ in this norm.

We let $w_{p, *} \colon G \to \Isom ( B^{p, *}_{\mathrm{r}} (G) )$
be the composition of $g \mapsto u_g$
with the obvious map $\C [G] \to B^{p, *}_{\mathrm{r}} (G)$.
There is a contractive \hm{}
$\io_p \colon B^{p, *}_{\mathrm{r}} (G) \to F^p_{\mathrm{r}} (G)$
coming from the inequality $\| \rh_p (a) \| \leq \| a \|_{p, *}$
for $a \in \C [G]$,
and we define $\ta_{p, *} \colon B^{p, *}_{\mathrm{r}} (G) \to \C$
by $\ta_{p, *} = \ta_p \circ \io_p$.
\end{dfn}

In general,
there seems to be no reason for $B^{p, *}_{\mathrm{r}} (G)$
to be an $L^p$~operator algebra.
Proposition~2.2 of~\cite{LiYu}
shows that $B^{p, *}_{\mathrm{r}} (G)$
need not equal $F^p_{\mathrm{r}} (G)$.

\begin{lem}\label{L_8Y02_BpStarsRedAlg}
Let the notation be as in Definition~\ref{D_8Y02_SAFp},
with $p \in (1, \I)$.
Then:
\begin{enumerate}
%
\item\label{L_8Y02_BpStarsRedAlg_StarA}
$B^{p, *}_{\mathrm{r}} (G)$ is a unital Banach *-algebra.
\item\label{L_8Y02_BpStarsRedAlg_Red}
$( B^{p, *}_{\mathrm{r}} (G), \, w_{p, *}, \, \ta_{p, *})$
is a reduced group Banach algebra for~$G$.
\item\label{L_8Y02_BpStarsRedAlg_Tau}
$\ta_{p, *}$ is selfadjoint and has norm~$1$.
\end{enumerate}
\end{lem}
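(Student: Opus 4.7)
The plan is to deduce all three conclusions from the corresponding facts for $(F^p_{\mathrm{r}}(G), w_p, \ta_p)$ in \Lem{L_8Y02_FprGIsRedAlg}, transported through the contractive \hm{} $\io_p \colon B^{p, *}_{\mathrm{r}}(G) \to F^p_{\mathrm{r}}(G)$; the one nontrivial point is faithfulness of~$\ta_{p, *}$, which requires also using the involution.

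Part~(\ref{L_8Y02_BpStarsRedAlg_StarA}) reduces to checking on the dense subalgebra $\C[G]$ that $*$ satisfies the usual involution identities (immediate from $a^* = \sum_g \ov{a_g} u_{g^{-1}}$), that $\|\cdot\|_{p, *}$ is submultiplicative (which follows from submultiplicativity of the operator norm applied to $\rh_p(a b)$ and to $\rh_p((a b)^*) = \rh_p(b^*) \rh_p(a^*)$), and that $*$ is isometric (because the defining maximum is symmetric in $a$ and $a^*$). These properties pass to the completion. For part~(\ref{L_8Y02_BpStarsRedAlg_Tau}), direct computation on $\C[G]$ gives $\ta_{p, *}(a) = a_1$ and $\ta_{p, *}(a^*) = \ov{a_1}$, so $\ta_{p, *}$ is selfadjoint on the dense subalgebra, hence everywhere by continuity of $\ta_{p, *}$ and of $*$; the bound $|\ta_{p, *}(a)| \le \|\ta_p\| \cdot \|\io_p(a)\| \le \|a\|_{p, *}$, combined with $\ta_{p, *}(1) = 1$, yields $\|\ta_{p, *}\| = 1$ (using $\|\ta_p\| = 1$, which follows from the formula $\ta_p(b) = \om(b \dt_{p, 1})$ in the proof of \Lem{L_8Y02_FprGIsRedAlg}).

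For part~(\ref{L_8Y02_BpStarsRedAlg_Red}), density holds by construction, $\|u_g\|_{p, *} = \max(\|w_p(g)\|, \|w_p(g^{-1})\|) = 1$ with inverse $u_{g^{-1}}$ places $w_{p, *}$ inside $\Isom(B^{p, *}_{\mathrm{r}}(G))$, and both the trace property and the vanishing $\ta_{p, *}(w_{p, *}(g)) = 0$ for $g \ne 1$ are inherited through $\io_p$ from the corresponding properties of $\ta_p$. The main obstacle is faithfulness. Given $\ta_{p, *}(a w_{p, *}(g)) = 0$ for all $g \in G$, applying $\io_p$ and using $\ta_p \circ \io_p = \ta_{p, *}$ reduces the hypothesis to $\ta_p(\io_p(a) w_p(g)) = 0$ for all $g$, so $\io_p(a) = 0$ by faithfulness of $\ta_p$. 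Selfadjointness of $\ta_{p, *}$ together with the trace property then give $\ta_{p, *}(a^* w_{p, *}(g)) = 0$ for all $g$, so the same argument yields $\io_p(a^*) = 0$. To conclude $a = 0$, I would observe that the map $a \mapsto (\rh_p(a), \rh_p(a^*))$ is by construction an isometry from $(\C[G], \|\cdot\|_{p, *})$ into $F^p_{\mathrm{r}}(G) \times F^p_{\mathrm{r}}(G)$ with the maximum norm, and extends uniquely by continuity to an isometry whose value on a general $a \in B^{p, *}_{\mathrm{r}}(G)$ is $(\io_p(a), \io_p(a^*))$. Hence $\|a\|_{p, *} = \max(\|\io_p(a)\|, \|\io_p(a^*)\|) = 0$. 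The only delicacy is that this extension is only $\R$-linear, since $a \mapsto a^*$ is conjugate-linear, but only the norm identity is needed.
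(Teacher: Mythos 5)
Your proposal is correct and follows essentially the same route as the paper: reduce everything to the corresponding properties of $(F^p_{\mathrm{r}} (G), w_p, \ta_p)$ via $\io_p$, and for faithfulness use $\io_p (a) = \io_p (a^*) = 0$ together with the identity $\| a \|_{p, *} = \max \bigl( \| \io_p (a) \|, \| \io_p (a^*) \| \bigr)$ on the completion. The only (harmless) variation is that you obtain $\ta_{p, *} ( a^* w_{p, *} (g) ) = 0$ from selfadjointness of $\ta_{p, *}$ and the trace property, whereas the paper verifies the identity $\ta_p \bigl( \io_p (a^*) w_p (g) \bigr) = {\overline{\ta_{p, *} ( a w_{p, *} (g^{-1}) )}}$ directly on $\C [G]$ and extends by continuity; your care about the $\R$-linear extension of $a \mapsto \io_p (a^*)$ is a point the paper leaves implicit.
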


We won't use part~(\ref{L_8Y02_BpStarsRedAlg_Tau})
or the fact that $B^{p, *}_{\mathrm{r}} (G)$ is a *-algebra,
but these are properties one wants to have
and which are not explicit in~\cite{LiYu}.

\begin{proof}[Proof of Lemma~\ref{L_8Y02_BpStarsRedAlg}]
Throughout,
we identify $\C [G]$ with its image in $B^{p, *}_{\mathrm{r}} (G)$,
and we let
$\io_p \colon B^{p, *}_{\mathrm{r}} (G) \to F^p_{\mathrm{r}} (G)$
be as in Definition~\ref{D_8Y02_SAFp}.
Thus, for $a \in B^{p, *}_{\mathrm{r}} (G)$
we have
\begin{equation}\label{Eq_8Y03_aaStar}
\| a \|_{p, *}
 = \max \bigl( \| \io_p (a) \|, \, \| \io_p (a^*) \| \bigr).
\end{equation}

For~(\ref{L_8Y02_BpStarsRedAlg_StarA}),
one checks that $a \mapsto a^*$
is a conjugate linear multiplication reversing involution on $\C [G]$.
It follows that
$\| \rh_p ( (a b)^*) \| \leq \| \rh_p ( a^*) \| \| \rh_p ( b^*) \|$
for $a, b \in \C [G]$.
Part~(\ref{L_8Y02_BpStarsRedAlg_StarA})
now follows easily.

For~(\ref{L_8Y02_BpStarsRedAlg_Red}),
we verify the conditions of Definition~\ref{N_8X29_AbstractRedG}.
It is immediate that
$w_{p, *}$ is a group \hm.
For $g \in G$ we have $\| w_{p, *} (g) \| = 1$
because $\| w_p (g) \| = 1$ and $\| w_{p} (g^{-1}) \| = 1$.
The map $\ta_{p, *}$ is a unital trace because
$\ta_p$ is one and $\io_p$ is a \ct{} unital \hm.

Condition~(\ref{Item_8X29_AbstractRedG_Density})
of Definition~\ref{N_8X29_AbstractRedG}
holds by construction.

We claim that for $a \in B^{p, *}_{\mathrm{r}} (G)$
and $g \in G$,
we have
\begin{equation}\label{Eq_8Y03_Traces}
\ta_p \bigl( \io_p (a) w_p (g) \bigr) = \ta_{p, *} ( a w_{p, *} (g) )
\andeqn
\ta_p \bigl( \io_p (a^*) w_p (g) \bigr)
 = {\overline{\ta_{p, *} ( a w_{p, *} (g^{-1}) )}}.
\end{equation}
Both are computations when $a \in \C [G]$:
if $a = \sum_{g \in G} a_g u_g$
as in Definition~\ref{N_8X29_AbstractRedG},
the common values are $a_{g^{-1}}$ in the first case
and ${\overline{a_g}}$ in the second case.
The claim then follows by continuity.

To prove Condition~(\ref{Item_8X29_AbstractRedG_Inj})
of Definition~\ref{N_8X29_AbstractRedG},
for $g \in G \setminus \{ 1 \}$ we use the first part
of~(\ref{Eq_8Y03_Traces}) to get
$\ta_{p, *} (w_{p, *} (g)) = \ta_p (w_p (g)) = 0$.

For
Condition~(\ref{Item_8X29_AbstractRedG_TrFaithful})
of Definition~\ref{N_8X29_AbstractRedG},
let $a \in B^{p, *}_{\mathrm{r}} (G)$,
and suppose that $\ta_{p, *} ( a w_{p, *} (g) ) = 0$
for all $g \in G$.
By~(\ref{Eq_8Y03_Traces}),
we get
\[
\ta_p \bigl( \io_p (a) w_p (g) \bigr) = 0
\andeqn
\ta_p \bigl( \io_p (a^*) w_p (g) \bigr) = 0
\]
for all $g \in G$.
Lemma~\ref{L_8Y02_FprGIsRedAlg}
implies that $\ta_p$
is faithful,
so $\io_p (a) = 0$ and $\io_p (a^*) = 0$.
Therefore $a = 0$ by~(\ref{Eq_8Y03_aaStar}).

Finally, we prove~(\ref{L_8Y02_BpStarsRedAlg_Tau}).
We have $\ta_{p, *} (a^*) = {\overline{ \ta_{p, *} (a) }}$
for $a \in \C [G]$,
and selfadjointness of~$\ta_{p, *}$ follows by continuity.
We have $\| \ta_{p, *} \| \leq 1$
because $\| \io_p \| \leq 1$ and $\| \ta_p \| \leq 1$,
and $\| \ta_{p, *} \| \geq 1$ because $\ta_{p, *} (1) = 1$.
\end{proof}

\section{Simplicity of $F^p_{\mathrm{r}} (G)$
  and $B^{p, *}_{\mathrm{r}} (G)$}\label{Sec_Simp}

\indent
We define the Powers property
for a reduced group Banach algebra.
It is an abstraction of the property
Powers used in~\cite{Pwr}
to prove simplicity of $C^*_{\mathrm{r}} (F_2)$.
The unital trace doesn't appear in the definition,
so it makes formal sense
for just a pair consisting of a unital Banach algebra~$A$
and a group \hm{} $w \colon G \to \Isom (A)$.

\begin{dfn}\label{D_8X29_Powers}
Let $G$ be a group,
and let $(A, w, \ta)$
be a reduced group Banach algebra for~$G$
(Definition~\ref{N_8X29_AbstractRedG}).
We say that $(A, w, \ta)$
has the {\emph{Powers property}}
if for every finite set $S \S G \SM \{ 1 \}$
and every $\ep > 0$,
there are $n \in \N$ and $h_1, h_2, \ldots, h_n \in G$
such that for all $g \in S$ we have
\[
\Biggl\| \frac{1}{n} \sum_{j = 1}^n w (h_j g h_j^{-1}) \Biggr\| < \ep.
\]
\end{dfn}

This property is not the same as
$G$ being a Powers group
as in Definition~2.5 of~\cite{HjzPya}.

The importance of the Powers property comes from
the following result of Haagerup, based on
the Kalantar-Kennedy characterization
of groups for which $C^*_{\mathrm{r}} (G)$ is simple~\cite{KlKnn}.

\begin{thm}[Haagerup]\label{T_8Y03_Simple}
Let $G$ be a group
(taken with the discrete topology).
Suppose $C^*_{\mathrm{r}} (G)$ is simple.
Then, following the notation from Definition~\ref{D_8Y02_FprG},
$( C^*_{\mathrm{r}} (G), \, w_2, \, \ta_2)$
has the Powers property.
\end{thm}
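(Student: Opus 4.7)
The plan is to combine the Kalantar--Kennedy characterisation of C*-simple groups with Haagerup's boundary averaging argument. By the Kalantar--Kennedy theorem~\cite{KlKnn}, simplicity of $C^*_{\mathrm{r}}(G)$ is equivalent to the action of $G$ on its Furstenberg boundary $\partial_F G$ being topologically free, giving a compact $G$-space which is both minimal and strongly proximal, and on which no nontrivial group element fixes a nonempty open set.

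The equivariant embedding $C^*_{\mathrm{r}}(G) \hookrightarrow C(\partial_F G) \rtimes_{\mathrm{r}} G$, coming from the inclusion $\C \hookrightarrow C(\partial_F G)$ as constants, transports the Powers estimate into the crossed product. There the target inequality acquires a geometric meaning: a convex combination of conjugates $\frac{1}{n} \sum_j u_{h_j} u_g u_{h_j^{-1}}$ has small norm precisely when the associated family of boundary translates disperses suitably in the weak-$*$ topology.

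Next I would invoke Haagerup's averaging lemma from~\cite{Hgrp}. For a single $g \in G \SM \{ 1 \}$, strong proximality combined with topological freeness produces a tuple $h_1, \ldots, h_n$ with $\bigl\| \tfrac{1}{n} \sum_j w_2(h_j g h_j^{-1}) \bigr\| < \ep$. To treat a finite set $S = \{ g_1, \ldots, g_k \}$ simultaneously, enumerate its elements and iterate: apply the single-element procedure to~$g_1$, then compose with an averaging tuple for~$g_2$, and so on. The key observation is that each new averaging is a convex combination in the unit ball of $C^*_{\mathrm{r}}(G)$ (since $w_2$ takes values in $\Isom(C^*_{\mathrm{r}}(G))$), hence does not increase the norm of any already-averaged conjugation sum; so a suitable concatenation yields one tuple valid for every element of~$S$.

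The main obstacle is the boundary averaging step itself, which is the technical core of~\cite{Hgrp} and which I would not attempt to reprove here; the substance of this theorem is the reduction via~\cite{KlKnn} that makes Haagerup's lemma applicable in our setting. Once both inputs are granted, assembling them into a verification of Definition~\ref{D_8X29_Powers} is essentially bookkeeping.
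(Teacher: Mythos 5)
The paper's proof is a one-line citation: the statement is exactly the implication from (i) to~(vi) in Theorem~4.5 of~\cite{Hgrp}, which already delivers the \emph{simultaneous} averaging over a finite set $S$. Your proposal identifies the right inputs (Kalantar--Kennedy plus Haagerup's boundary averaging), but the step you add — reducing the finite-set case to the single-element case by concatenating averaging tuples — is where the argument breaks.

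Concretely, suppose the tuple $h_1, \ldots, h_n$ averages $g_1$ and you compose with a further tuple $k_1, \ldots, k_m$ hoping to also handle $g_2$. The resulting average for $g_2$ is
\[
\frac{1}{m n} \sum_{i = 1}^{m} \sum_{j = 1}^{n}
   w_2 \bigl( k_i h_j g_2 h_j^{-1} k_i^{-1} \bigr)
 = \frac{1}{n} \sum_{j = 1}^{n}
   \Biggl[ \frac{1}{m} \sum_{i = 1}^{m}
     w_2 \bigl( k_i \, (h_j g_2 h_j^{-1}) \, k_i^{-1} \bigr) \Biggr],
\]
so the new tuple must simultaneously average the $n$ distinct elements $h_1 g_2 h_1^{-1}, \ldots, h_n g_2 h_n^{-1}$ — which is precisely the finite-set statement you are trying to establish. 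The contractivity observation you invoke only protects the element already handled; it gives nothing for the new one, and reversing the order of composition runs into the same problem with $g_1$. That this gap is not reparable by bookkeeping is shown by the paper itself: after Theorem~\ref{T_8Y03_UniqTr} it notes that by~\cite{LBdc} there are countable groups whose reduced C*-algebra has a unique tracial state (hence, by Haagerup's Theorem~5.2, satisfies the $g$-Powers property of Definition~\ref{D_8Y02_gPowers} for every $g \neq 1$) yet is \emph{not} simple. If single-element averaging could be concatenated into the Powers property of Definition~\ref{D_8X29_Powers}, Proposition~\ref{L_6Z16_PPImpSimple} would force simplicity for those groups — a contradiction. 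The tuple $h_1, \ldots, h_n$ must be produced for all of $S$ at once from the boundary dynamics, which is exactly what Theorem~4.5 of~\cite{Hgrp} does and what the paper cites.
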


\begin{proof}
By Remark~\ref{R_9224_CStarR},
this is the implication from (i) to~(vi)
in Theorem~4.5 of~\cite{Hgrp}.
\end{proof}

\begin{prp}\label{P_8Y07_HasPowers}
Let $G$ be a group
(taken with the discrete topology).
Suppose $C^*_{\mathrm{r}} (G)$ is simple.
Let $p \in (1, \I)$.
Then:
\begin{enumerate}
%
\item\label{Item_8Y07_Fpr}
The triple $(F^p_{\mathrm{r}} (G), \, w_p, \, \ta_p )$
of Definition~\ref{D_8Y02_FprG}
has the Powers property.
\item\label{Item_8Y07_Bpstar}
The triple $(B^{p, *}_{\mathrm{r}} (G), \, w_{p, *}, \, \ta_{p, *} )$
of Definition~\ref{D_8Y02_SAFp}
has the Powers property.
\end{enumerate}
\end{prp}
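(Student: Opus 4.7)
The strategy is to bootstrap the Powers property at $p = 2$, provided by \Thm{T_8Y03_Simple}, to all $p \in (1, \I)$ using Riesz-Thorin interpolation. The key observation is that for any $h_1, \ldots, h_n \in G$ and any $g \in G$, the formal average $b_g = \tfrac{1}{n} \sum_{j=1}^n u_{h_j g h_j^{-1}} \in \C [G]$ is sent by each $\rh_q$ to a convex combination of isometries on $l^q (G)$, hence $\| \rh_q (b_g) \| \leq 1$ for every $q \in [1, \I]$; this gives us the ``free'' endpoint bounds needed to interpolate.

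For part~(\ref{Item_8Y07_Fpr}), fix a finite set $S \S G \SM \{ 1 \}$ and $\ep > 0$, and let $\dt > 0$ be a tolerance to be chosen later. Apply \Thm{T_8Y03_Simple} to $S$ with tolerance $\dt$ to obtain $n \in \N$ and $h_1, \ldots, h_n \in G$ with $\| \rh_2 (b_g) \| < \dt$ for every $g \in S$ simultaneously (the same $h_j$ work for all elements of~$S$ because Haagerup's property is stated uniformly on finite subsets). For $p \in (1, 2)$, apply Riesz-Thorin (Theorem 6.27 of~\cite{Fld2}) with endpoints $(p_0, p_1) = (1, 2)$ and $\te \in (0, 1)$ determined by $\tfrac{1}{p} = \tfrac{1 - \te}{1} + \tfrac{\te}{2}$; this yields
\[
\| \rh_p (b_g) \|
 \leq \| \rh_1 (b_g) \|^{1 - \te} \| \rh_2 (b_g) \|^{\te}
 \leq \dt^{\te}.
\]
For $p \in (2, \I)$, interpolate instead between $(p_0, p_1) = (2, \I)$ to obtain a bound of the form $\dt^{1 - \te}$ with $\te \in (0, 1)$ determined by $\tfrac{1}{p} = \tfrac{1 - \te}{2}$. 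In either regime $\te$ depends only on~$p$, so choosing $\dt$ small enough forces the bound below~$\ep$ for all $g \in S$ at once.

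For part~(\ref{Item_8Y07_Bpstar}), observe that the element $b = \tfrac{1}{n} \sum_{j=1}^n u_{h_j g h_j^{-1}} \in \C [G]$ has adjoint $b^* = \tfrac{1}{n} \sum_{j=1}^n u_{h_j g^{-1} h_j^{-1}}$, which is of exactly the same averaged-conjugate form but with $g$ replaced by~$g^{-1}$. Since $\| \cdot \|_{p, *} = \max \bigl( \| \rh_p (\cdot) \|, \, \| \rh_p ((\cdot)^*) \| \bigr)$, part~(\ref{Item_8Y07_Bpstar}) reduces immediately to applying part~(\ref{Item_8Y07_Fpr}) to the finite set $S \cup S^{-1} \S G \SM \{ 1 \}$ with tolerance $\ep$: the resulting $h_1, \ldots, h_n$ simultaneously control both $\| \rh_p (b) \|$ and $\| \rh_p (b^*) \|$. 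There is no serious obstacle; the only minor subtlety is that in applying Riesz-Thorin one views the interpolation operator as a single linear map on the finitely supported functions in~$G$ (dense in every $l^q (G)$ for $q < \I$, and weak-$*$ dense for $q = \I$), and the $p = 1$ and $p = \I$ endpoint bounds come for free precisely because we are averaging isometries.
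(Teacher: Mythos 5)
Your proof is correct and follows essentially the same route as the paper: interpolate the $p=2$ bound from Theorem~\ref{T_8Y03_Simple} against the trivial norm-one bound for convex combinations of isometries, then handle $B^{p,*}_{\mathrm{r}}(G)$ by applying part~(\ref{Item_8Y07_Fpr}) to $S \cup S^{-1}$ and taking the maximum of the two norms. The only divergence is for $p \in (2,\I)$, where you interpolate directly between the endpoints $2$ and $\I$ while the paper instead reduces to the conjugate exponent $q \in (1,2)$ by duality (identifying $w_p(h)'$ with $w_q(h^{-1})$); both are valid, and indeed the paper itself uses the $l^{\I}$ endpoint of Riesz--Thorin in the analogous argument of Proposition~\ref{P_8Y07_HasgPowers}.
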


\begin{proof}
We first prove~(\ref{Item_8Y07_Fpr}).
For $p = 2$,
this is Remark~\ref{R_9224_CStarR} and Theorem~\ref{T_8Y03_Simple}.

Next,
suppose that $p \in (1, 2)$ and consider
$(F^p_{\mathrm{r}} (G), \, w_p, \, \ta_p )$.
Let $S \S G \SM \{ 1 \}$ be finite and let $\ep > 0$.
Define $\ld = 2 \big( 1 - \frac{1}{p} \big)$,
which is in $(0, 1)$.
Choose $\dt > 0$ such that
$\dt^{\ld} < \ep$.
By Theorem~\ref{T_8Y03_Simple},
there are $n \in \N$ and $h_1, h_2, ..., h_n \in G$
such that for all $g \in S$ we have
\begin{equation}\label{Eq_NormEst}
\Biggl\| \frac{1}{n} \sum_{j = 1}^n w_2 (h_j g h_j^{-1}) \Biggr\| < \dt.
\end{equation}

We apply the Riesz-Thorin Interpolation Theorem
(Theorem 6.27 of~\cite{Fld2}).
We warn that numbers $p_t$ and $q_t$ appear there,
but $q_t$ is {\emph{not}} the conjugate exponent for~$p_t$.
In the notation there,
take
$X = Y = G$,
both measures to be counting measure,
\[
p_0 = q_0 = 1,
\qquad
p_1 = q_1 = 2,
\qquad
M_0 = 1,
\andeqn
M_1 = \dt.
\]
Let $F (G)$ be the vector space of all functions from $G$ to~$\C$,
and for $k \in G$ define a linear map $w (k) \colon F (G) \to F (G)$
by $\bigl( w (k) \xi \bigr) (h) = \xi (k^{-1} h)$
for $\xi \in F (G)$ and $h \in G$.
The operator
\[
T \colon l^{1} (G) + l^{2} (G)
 \to l^{1} (G) + l^{2} (G)
\]
of Theorem 6.27 of~\cite{Fld2}
will be the restriction to $l^{1} (G) + l^{2} (G)$
of
$\frac{1}{n} \sum_{j = 1}^n w (h_j g h_j^{-1})$.
Since $w (g) |_{l^r (G)} = w_r (g)$
for all $r \in [1, \I]$,
it is clear that
$T ( l^{1} (G) ) \S l^{1} (G)$
and $T ( l^{2} (G) ) \S l^{2} (G)$,
so that
$T \bigl( l^{1} (G) + l^{2} (G) \bigr) \S l^{1} (G) + l^{2} (G)$,
as required in the Riesz-Thorin Interpolation Theorem.
Moreover,
for $\xi \in l^{1} (G)$ we trivially have
$\| T \xi \|_1 \leq M_0 \| \xi \|_1$,
and for $\xi \in l^{2} (G)$
we get from~(\ref{Eq_NormEst}) the inequality
$\| T \xi \|_2 \leq M_1 \| \xi \|_2$.
Apply Theorem 6.27 of~\cite{Fld2} with $t = \ld$,
so that the numbers $p_t$ and~$q_t$ there are
both equal to~$p$.
The conclusion is that for $\xi \in l^{p} (G)$ we have
\[
\| T \xi \|_p
 \leq M_0^{1 - \ld} M_1^{\ld} \| \xi \|_p
 = \dt^{\ld} \| \xi \|_p.
\]
So
\[
\Bigg\| \frac{1}{n} \sum_{j = 1}^n w_p (h_j g h_j^{-1}) \Bigg\|
 \leq \dt^{\ld}
 < \ep.
\]
This proves~(\ref{Item_8Y07_Fpr})
when $p \in (1, 2)$.

Now let $p \in (2, \I)$.
Let $q \in (1, 2)$
satisfy $\frac{1}{p} + \frac{1}{q} = 1$.
Let $S \S G \SM \{ 1 \}$ be finite and let $\ep > 0$.
Apply the case just done with $q$ in place of~$p$
and with $\{ g^{-1} \colon g \in S \}$ in place of~$S$,
getting $n \in \N$ and $h_1, h_2, ..., h_n \in G$
such that for all $g \in S$ we have
\[
\Biggl\| \frac{1}{n} \sum_{j = 1}^n w_q (h_j g^{-1} h_j^{-1}) \Biggr\|
 < \ep.
\]
One easily checks that the standard isomorphism
$l^p (G)' \cong l^q (G)$ (see Notation~(\ref{N_8Y03_Duality}))
identifies $w_p (h)'$ with $w_q (h^{-1})$ for all $h \in G$.
Since dual operators have the same norm,
for all $g \in S$ we get
\[
\Biggl\| \frac{1}{n} \sum_{j = 1}^n w_p (h_j g h_j^{-1}) \Biggr\|
  = \Biggl\| \Biggl[ \frac{1}{n} \sum_{j = 1}^n w_p (h_j g h_j^{-1})
        \Biggr]' \Biggr\|
  = \Biggl\| \frac{1}{n}
              \sum_{j = 1}^n w_q (h_j g^{-1} h_j^{-1}) \Biggr\|
  < \ep.
\]
This proves the case $p \in (2, \I)$,
and finishes the proof of~(\ref{Item_8Y07_Fpr}).

We now prove~(\ref{Item_8Y07_Bpstar}).
Let $S \S G \SM \{ 1 \}$ be finite and let $\ep > 0$.
Define $R = S \cup \{ g^{-1} \colon g \in S \}$,
which is a finite subset of $G \SM \{ 1 \}$.
By~(\ref{Item_8Y07_Fpr}),
there are $n \in \N$ and $h_1, h_2, ..., h_n \in G$
such that for all $g \in R$ we have
\[
\Biggl\| \frac{1}{n}
    \sum_{j = 1}^n w_p (h_j g h_j^{-1}) \Biggr\|
 < \ep.
\]
For $g \in S$ we then get, using $u_k^* = u_{k^{-1}}$ in $\C [G]$
(see Definition~\ref{D_8Y02_SAFp}) at the first step
and $g, g^{-1} \in R$ at the last step,
\[
\Biggl\| \frac{1}{n}
      \sum_{j = 1}^n w_{p, *} (h_j g h_j^{-1}) \Biggr\|
  = \max \Biggl( \Biggl\| \frac{1}{n}
        \sum_{j = 1}^n w_p (h_j g h_j^{-1}) \Biggr\|, \,
    \Biggl\|\frac{1}{n}
      \sum_{j = 1}^n w_p (h_j g^{-1} h_j^{-1})  \Biggr\| \Biggr)
  < \ep.
\]
This completes the proof.
\end{proof}

The proof of the following proposition is standard.
(It is the same as the argument in~\cite{Pwr}.
See the proofs of Lemma~6 and Theorem~1 there.)
Since it is central to this paper,
we give a full proof for completeness.

\begin{prp}\label{L_6Z16_PPImpSimple}
Let $G$ be a group
(taken with the discrete topology),
and let $(A, w, \ta)$
be a reduced group Banach algebra for~$G$
(Definition~\ref{N_8X29_AbstractRedG})
which has the Powers property
(Definition~\ref{D_8X29_Powers}).
Then $A$ is simple.
\end{prp}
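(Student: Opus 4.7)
The plan is to run the classical Powers argument: start with an arbitrary nonzero closed two-sided ideal $J \S A$, use the trace faithfulness and density axioms to produce an element of $J$ close to the identity after a suitable averaging given by the Powers property, and then conclude $1 \in J$.

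First I would fix a nonzero $a \in J$. By condition~(\ref{Item_8X29_AbstractRedG_TrFaithful}) of \Def{N_8X29_AbstractRedG}, there exists $g_0 \in G$ with $\ta(a w(g_0)) \neq 0$. Replacing $a$ by the scalar multiple $a w(g_0) / \ta(a w(g_0))$, which still lies in $J$ since $J$ is an ideal and $w(g_0) \in \Isom(A)$, I may assume $\ta(a) = 1$. Next, given a tolerance $\dt > 0$ to be chosen, I would use condition~(\ref{Item_8X29_AbstractRedG_Density}) to pick a finite set $F \S G$ containing $1$ and scalars $(\alpha_g)_{g \in F}$ with
\[
\Biggl\| a - \sum_{g \in F} \alpha_g w(g) \Biggr\| < \dt.
\]
Applying $\ta$ and using $\ta(w(g)) = 0$ for $g \neq 1$ (condition~(\ref{Item_8X29_AbstractRedG_Inj})) together with continuity of $\ta$, one finds $|\alpha_1 - 1|$ small (controlled by $\dt \|\ta\|$). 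Write $b = \sum_{g \in F \SM \{1\}} \alpha_g w(g)$, set $r = a - \alpha_1 \cdot 1 - b$ so $\| r \| < \dt + |\alpha_1 - 1|$, and set $M = \sum_{g \in F \SM \{1\}} |\alpha_g|$.

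With $F$, $(\alpha_g)$, and $M$ now fixed, I apply the Powers property to $S = F \SM \{1\}$ with $\ep = \dt / (M + 1)$, obtaining $n \in \N$ and $h_1, \ldots, h_n \in G$ such that $\big\| \tfrac{1}{n} \sum_j w(h_j g h_j^{-1}) \big\| < \ep$ for every $g \in S$. Because the $w(h_j)$ and $w(h_j^{-1}) = w(h_j)^{-1}$ are isometries, conjugation by $w(h_j)$ is norm-preserving, so averaging gives
\[
y := \frac{1}{n} \sum_{j = 1}^{n} w(h_j) \, a \, w(h_j)^{-1}
 = \alpha_1 \cdot 1 + \sum_{g \in S} \alpha_g \cdot \frac{1}{n} \sum_{j=1}^n w(h_j g h_j^{-1}) + \frac{1}{n} \sum_{j=1}^n w(h_j) r w(h_j)^{-1},
\]
which lies in $J$. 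The middle term has norm at most $M \ep \leq \dt$ and the last has norm at most $\| r \|$, so $\| y - 1 \|$ is bounded by the sum of three quantities each controllable by $\dt$. Choosing $\dt$ small enough at the very start makes $\| y - 1 \| < 1$, so $y$ is invertible by a Neumann series. Since $y \in J$, this forces $1 \in J$ and hence $J = A$.

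I do not expect any serious obstacle; the only thing to be careful about is the order in which the parameters are chosen. One must fix $\dt$ first, use it to obtain the approximating finite sum (which in turn determines $F$ and $M$), and only then invoke the Powers property with $\ep = \dt / (M+1)$, to avoid a circular dependence of $\ep$ on $M$. The role of each axiom is transparent: condition~(\ref{Item_8X29_AbstractRedG_TrFaithful}) lets us rescale to trace~$1$, condition~(\ref{Item_8X29_AbstractRedG_Density}) produces the finite approximation, condition~(\ref{Item_8X29_AbstractRedG_Inj}) identifies the $1$-coefficient with $\ta$, and the isometric hypothesis on $w$ is what keeps the averaging from inflating the remainder.
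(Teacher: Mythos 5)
Your proposal is correct and follows essentially the same route as the paper's proof: normalize a nonzero element of the ideal to have trace~$1$ using faithfulness, approximate it by a finite linear combination of the $w(g)$, isolate the identity coefficient via condition~(\ref{Item_8X29_AbstractRedG_Inj}), average over conjugates supplied by the Powers property, and conclude the average is within distance~$1$ of the identity, hence invertible. The only (immaterial) differences are bookkeeping: the paper fixes the numerical tolerances $\tfrac{1}{4\|\ta\|}$ and $\tfrac{1}{4M}$ up front rather than carrying a generic $\dt$, and its argument applies verbatim to not-necessarily-closed ideals, which your argument also does since closedness is never used.
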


\begin{proof}
Let $I \S A$ be a nonzero ideal.
We show that $I$ contains an invertible element.
Choose a nonzero element $a \in I$.
By Definition
\ref{N_8X29_AbstractRedG}(\ref{Item_8X29_AbstractRedG_TrFaithful}),
there is $k \in G$ such that $\ta (a w (k)) \neq 0$.
Define $b = \ta (a w (k))^{-1} a w (k^{-1})$.
Then $b \in I$ and $\ta (b) = 1$.
By Definition
\ref{N_8X29_AbstractRedG}(\ref{Item_8X29_AbstractRedG_Density}),
there are a finite set $S \S G$
and numbers $\ld_g \in \C$
for $g \in S$
such that
\[
\Biggl\| b - 1 - \sum_{g \in S} \ld_g w (g) \Biggr\|
 < \frac{1}{4 \| \ta \|}.
\]
\Wolog{} $1 \in S$.
Applying $\ta$ and using
Definition
\ref{N_8X29_AbstractRedG}(\ref{Item_8X29_AbstractRedG_Inj}),
we get $| \ld_1 | < \frac{1}{4}$.
Define
$c = \sum_{g \in S \setminus \{ 1 \} } \ld_g w (g)$;
the sum is now over $S \setminus \{ 1 \}$ instead of~$S$.
Then
\[
\| b - 1 - c \| < \frac{1}{2}
\andeqn
\ta (c) = 0.
\]
Define $M = 1 + \sum_{g \in S \setminus \{ 1 \} } | \ld_g |$.
Use the Powers property to choose
$n \in \N$ and $h_1, h_2, ..., h_n \in G$
such that for all $g \in S \setminus \{ 1 \}$ we have
\[
\Biggl\| \frac{1}{n} \sum_{j = 1}^n w (h_j g h_j^{-1}) \Biggr\|
 < \frac{1}{4 M}.
\]
Then
\[
\Biggl\| \frac{1}{n} \sum_{j = 1}^n w (h_j) c w (h_j^{-1}) \Biggr\|
 \leq \sum_{g \in S \setminus \{ 1 \} } | \ld_g |
     \Biggl\| \frac{1}{n} \sum_{j = 1}^n w (h_j g h_j^{-1}) \Biggr\|
 < \frac{1}{4}.
\]

Set
\[
x = \frac{1}{n} \sum_{j = 1}^n w (h_j) b w (h_j^{-1}).
\]
Then $x \in I$.
Also,
\begin{align*}
\| x - 1 \|
& \leq
   \Biggl\| \frac{1}{n} \sum_{j = 1}^n w (h_j) c w (h_j^{-1}) \Biggr\|
  + \Biggl\|
   \frac{1}{n} \sum_{j = 1}^n w (h_j) (b - 1 - c) w (h_j^{-1}) \Biggr\|
\\
& \leq
   \Biggl\| \frac{1}{n} \sum_{j = 1}^n w (h_j) c w (h_j^{-1}) \Biggr\|
      + \| b - 1 - c \|
  < \frac{1}{2} + \frac{1}{4}
  < 1.
\end{align*}
Therefore $x$ is invertible.
Since $x \in I$,
it follows that $I = A$.
\end{proof}

As a corollary, we get the following theorem.

\begin{thm}\label{T_8Y07_SimplToSimple}
Let $G$ be a group
(taken with the discrete topology).
Suppose $C^*_{\mathrm{r}} (G)$ is simple.
Let $p \in (1, \I)$.
Then:
\begin{enumerate}
%
\item\label{Item_8Y07_SimplToSimple_Fpr}
The algebra $F^p_{\mathrm{r}} (G)$
of Definition~\ref{D_8Y02_FprG}
is simple.
\item\label{Item_8Y07_SimplToSimple_Bpstar}
The algebra $B^{p, *}_{\mathrm{r}} (G)$
of Definition~\ref{D_8Y02_SAFp}
is simple.
\end{enumerate}
\end{thm}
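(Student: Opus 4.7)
The plan is to combine the two main ingredients established earlier in the section, since the theorem is a formal corollary of Propositions~\ref{P_8Y07_HasPowers} and~\ref{L_6Z16_PPImpSimple}. First, Lemma~\ref{L_8Y02_FprGIsRedAlg} guarantees that $(F^p_{\mathrm{r}} (G), w_p, \ta_p)$ is a reduced group Banach algebra in the sense of Definition~\ref{N_8X29_AbstractRedG}, and Lemma~\ref{L_8Y02_BpStarsRedAlg}(\ref{L_8Y02_BpStarsRedAlg_Red}) does the same for $(B^{p, *}_{\mathrm{r}} (G), w_{p, *}, \ta_{p, *})$, so both triples are legitimate inputs for the Powers-property machinery.

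Next, the hypothesis that $C^*_{\mathrm{r}} (G)$ is simple together with the assumption $p \in (1, \I)$ puts us exactly in the situation of Proposition~\ref{P_8Y07_HasPowers}, whose conclusions (\ref{Item_8Y07_Fpr}) and~(\ref{Item_8Y07_Bpstar}) furnish the Powers property for $(F^p_{\mathrm{r}} (G), w_p, \ta_p)$ and $(B^{p, *}_{\mathrm{r}} (G), w_{p, *}, \ta_{p, *})$ respectively. This is the step where the substantive content sits: the Powers property is imported from $p = 2$ (Haagerup's Theorem~\ref{T_8Y03_Simple}) via Riesz--Thorin interpolation for $p \in (1, 2)$, via duality for $p \in (2, \I)$, and then to the $*$-completion by applying the $F^p$ result simultaneously at $p$ and the conjugate exponent.

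Finally, Proposition~\ref{L_6Z16_PPImpSimple} says that any reduced group Banach algebra with the Powers property is simple. Applying it twice---once to $(F^p_{\mathrm{r}} (G), w_p, \ta_p)$ and once to $(B^{p, *}_{\mathrm{r}} (G), w_{p, *}, \ta_{p, *})$---yields (\ref{Item_8Y07_SimplToSimple_Fpr}) and~(\ref{Item_8Y07_SimplToSimple_Bpstar}) of the theorem. There is no real obstacle to overcome at this stage; all the analytic work has been absorbed into the interpolation argument of Proposition~\ref{P_8Y07_HasPowers} and the convex-combination/invertibility argument of Proposition~\ref{L_6Z16_PPImpSimple}, so the proof reduces to citing these two results in succession.
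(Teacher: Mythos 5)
Your proposal is correct and follows exactly the paper's own argument: use Lemma~\ref{L_8Y02_FprGIsRedAlg} and Lemma~\ref{L_8Y02_BpStarsRedAlg}(\ref{L_8Y02_BpStarsRedAlg_Red}) to see the two triples are reduced group Banach algebras, then combine Proposition~\ref{P_8Y07_HasPowers} with Proposition~\ref{L_6Z16_PPImpSimple}. No differences worth noting.
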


\begin{proof}
Justified by Lemma~\ref{L_8Y02_FprGIsRedAlg}
(for~(\ref{Item_8Y07_SimplToSimple_Fpr}))
and
Lemma \ref{L_8Y02_BpStarsRedAlg}(\ref{L_8Y02_BpStarsRedAlg_Red})
(for~(\ref{Item_8Y07_SimplToSimple_Bpstar})),
combine Proposition~\ref{P_8Y07_HasPowers}
and Proposition~\ref{L_6Z16_PPImpSimple}.
\end{proof}

\begin{cor}\label{C_9903_ToSimple}
Let $G$ be a group,
taken with the discrete topology.
Suppose that there is a $G$-boundary
(in the sense of Definition~3.8 of~\cite{KlKnn})
which is a topologically free $G$-space.
Let $p \in (1, \I)$.
Then $F^p_{\mathrm{r}} (G)$ and $B^{p, *}_{\mathrm{r}} (G)$
are simple.
\end{cor}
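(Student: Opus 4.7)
The plan is to reduce immediately to the already-proved Theorem~\ref{T_8Y07_SimplToSimple} by invoking the Kalantar--Kennedy dynamical criterion for C*-simplicity. Specifically, Theorem~1.5 of~\cite{KlKnn} asserts that if $G$ admits a topologically free $G$-boundary, then $C^*_{\mathrm{r}}(G)$ is simple. Once this is in hand, the hypothesis of Theorem~\ref{T_8Y07_SimplToSimple} is satisfied, and both conclusions follow at once.

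Concretely, I would proceed as follows. First, cite the implication in~\cite{KlKnn} from the existence of a topologically free $G$-boundary to simplicity of $C^*_{\mathrm{r}}(G)$; this is exactly what is needed and requires no further work here. Second, for the given $p \in (1, \I)$, apply Theorem~\ref{T_8Y07_SimplToSimple}(\ref{Item_8Y07_SimplToSimple_Fpr}) to conclude simplicity of $F^p_{\mathrm{r}}(G)$, and Theorem~\ref{T_8Y07_SimplToSimple}(\ref{Item_8Y07_SimplToSimple_Bpstar}) to conclude simplicity of $B^{p, *}_{\mathrm{r}}(G)$.

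There is no genuine obstacle: the entire chain of reasoning (topologically free boundary $\Rightarrow$ $C^*_{\mathrm{r}}(G)$ simple $\Rightarrow$ Haagerup's Powers property for $C^*_{\mathrm{r}}(G)$ via Theorem~\ref{T_8Y03_Simple} $\Rightarrow$ Powers property for $F^p_{\mathrm{r}}(G)$ and $B^{p, *}_{\mathrm{r}}(G)$ by Proposition~\ref{P_8Y07_HasPowers} $\Rightarrow$ simplicity via Proposition~\ref{L_6Z16_PPImpSimple}) has been assembled in the preceding sections. The only thing to verify is that the correct Kalantar--Kennedy statement is being cited, namely the one identifying topological freeness of a $G$-boundary with C*-simplicity. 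The proof is therefore essentially a one-line deduction from Theorem~\ref{T_8Y07_SimplToSimple}.
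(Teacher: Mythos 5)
Your proposal is correct and follows exactly the paper's argument: invoke the Kalantar--Kennedy implication that a topologically free $G$-boundary forces simplicity of $C^*_{\mathrm{r}}(G)$, then apply Theorem~\ref{T_8Y07_SimplToSimple}. The only (immaterial) difference is the precise citation within~\cite{KlKnn}; the paper points to the implication from (5) to~(1) in Theorem~6.2 there rather than to Theorem~1.5.
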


\begin{proof}
Combine Theorem~\ref{T_8Y07_SimplToSimple}
with the implication from (5) to~(1)
in Theorem~6.2 of~\cite{KlKnn}.
\end{proof}

Theorem \ref{T_8Y07_SimplToSimple}(\ref{Item_8Y07_SimplToSimple_Fpr})
implies group algebra case
(but not the general statement for crossed products)
in Theorem 3.7 of~\cite{HjzPya}.

We also see that $L^1 (G)$ can't have the Powers property
unless $G$ is trivial.
Recall from Proposition 3.14 of~\cite{Ph-Lp3}
that $L^1 (G) = F^1_{\mathrm{r}} (G) = F^1 (G)$.

\begin{cor}\label{L1NoPowers}
Let $G$ be a group with more than one element,
taken with the discrete topology.
Then $L^1 (G)$ does not have the Powers property.
\end{cor}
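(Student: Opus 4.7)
The plan is to use the explicit description of the operator norm of $\rho_1(\sum a_g u_g)$ on $l^1(G)$, which simply equals the $l^1$ coefficient sum $\sum_g |a_g|$. Once this is established, any convex combination of the form appearing in the Powers property will automatically have norm exactly~$1$, obstructing the property as soon as $G \neq \{1\}$.

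First I would identify $L^1(G)$ with $F^1_{\mathrm{r}}(G)$ via Proposition~3.14 of~\cite{Ph-Lp3}, so the Powers property for $L^1(G)$ means the Powers property for $(F^1_{\mathrm{r}}(G), w_1, \ta_1)$ from Definition~\ref{D_8Y02_FprG}. Next, I would verify the key norm identity: for any $a = \sum_{g \in G} a_g u_g \in \C[G]$,
\[
\| \rh_1(a) \|_{L(l^1(G))} \;=\; \sum_{g \in G} |a_g|.
\]
The upper bound is immediate from the triangle inequality together with $\| w_1(g) \| = 1$ for every $g$. For the lower bound, apply $\rh_1(a)$ to $\dt_{1,1}$ to obtain $\sum_g a_g \dt_{1,g}$, whose $l^1$-norm is $\sum_g |a_g|$ since the $\dt_{1,g}$ have disjoint support. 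By density, the same formula extends to elements of $F^1_{\mathrm{r}}(G)$ expressible as norm-convergent sums.

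With the norm identity in hand, fix $g \in G \SM \{1\}$, which exists because $G$ has more than one element, and consider an arbitrary choice of $h_1, \ldots, h_n \in G$. Grouping the conjugates $h_j g h_j^{-1}$ by their value in $G$ and writing $c_k = \tfrac{1}{n} \# \{ j : h_j g h_j^{-1} = k \}$, we have
\[
\frac{1}{n} \sum_{j=1}^n w_1(h_j g h_j^{-1}) \;=\; \sum_{k \in G} c_k \, w_1(k),
\]
with $c_k \geq 0$ and $\sum_k c_k = 1$. By the norm identity, this element has norm exactly~$1$. Taking $S = \{g\}$ and any $\ep \in (0, 1)$ (for instance $\ep = 1/2$) in Definition~\ref{D_8X29_Powers} shows that the required inequality cannot be satisfied, so $L^1(G)$ does not have the Powers property.

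There is no real obstacle; the essential point is just that the left regular representation on $l^1(G)$ is not merely contractive but norm-preserving on coefficient $l^1$-norms, which is a degenerate feature specific to $p = 1$. This is also exactly the reason one must restrict to $p \in (1, \I)$ in Theorem~\ref{T_8Y07_SimplToSimple}: the Riesz-Thorin interpolation used in Proposition~\ref{P_8Y07_HasPowers} needs a nontrivial endpoint improvement over the trivial bound at $p = 1$, and no such improvement is available on $L^1(G)$ itself.
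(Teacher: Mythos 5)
Your proof is correct, but it takes a genuinely different route from the paper's. The paper argues by contraposition through simplicity: it invokes the known fact that $L^1 (G)$ is not simple when $G$ is nontrivial (it has the augmentation ideal), identifies $L^1 (G)$ with $F^1_{\mathrm{r}} (G)$ via Proposition~3.14 of~\cite{Ph-Lp3}, and then applies Proposition~\ref{L_6Z16_PPImpSimple} (Powers property implies simplicity) to conclude. You instead compute directly: the identity $\| \rh_1 (a) \| = \sum_{g} | a_g |$ on $l^1 (G)$ (upper bound by the triangle inequality, lower bound by evaluating at $\dt_{1,1}$) forces every convex combination $\frac{1}{n} \sum_{j} w_1 ( h_j g h_j^{-1} )$ to have norm exactly~$1$, so the defining inequality of Definition~\ref{D_8X29_Powers} fails for any $\ep < 1$. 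Both arguments are sound. Yours is more self-contained --- it does not rely on the external fact that $L^1 (G)$ is non-simple --- and it is quantitative, pinpointing the rigidity of the $l^1$ operator norm as the obstruction; this also explains cleanly why the interpolation in Proposition~\ref{P_8Y07_HasPowers} must have its nontrivial endpoint at $p = 2$ rather than $p = 1$. The paper's argument is shorter given the machinery already in place, but is purely indirect. One minor remark: the sentence about extending the norm identity by density to all of $F^1_{\mathrm{r}} (G)$ is unnecessary, since the Powers property only ever involves finite convex combinations of the $w_1 (g)$.
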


\begin{proof}
We know that $L^1 (G)$ is not simple.
By Proposition 3.14 of~\cite{Ph-Lp3},
we have $L^1 (G) = F^1_{\mathrm{r}} (G)$.
Apply Lemma~\ref{L_8Y02_FprGIsRedAlg}
and Proposition~\ref{L_6Z16_PPImpSimple}.
\end{proof}

\section{Uniqueness of the trace on $F^p_{\mathrm{r}} (G)$
  and $B^{p, *}_{\mathrm{r}} (G)$}\label{Sec_Uniq}

\indent
We now define the single element Powers property.
It will be used to prove uniqueness of the trace.
As with the Powers property,
the unital trace doesn't appear in the definition,
so it makes formal sense
for just a pair consisting of a unital Banach algebra~$A$
and a group \hm{} $w \colon G \to \Isom (A)$.

\begin{dfn}\label{D_8Y02_gPowers}
Let $G$ be a group,
let $(A, w, \ta)$
be a reduced group Banach algebra for~$G$
(Definition~\ref{N_8X29_AbstractRedG}),
and let $g \in G \setminus \{ 1 \}$.
We say that $(A, w, \ta)$
has the {\emph{$g$-Powers property}}
if for every $\ep > 0$
there are $n \in \N$ and $h_1, h_2, \ldots, h_n \in G$
such that
\[
\Biggl\| \frac{1}{n} \sum_{j = 1}^n w (h_j g h_j^{-1}) \Biggr\| < \ep.
\]
\end{dfn}

It is not hard to check that
$(A, w, \ta)$ has the $g$-Powers property
\ifo{}
$0 \in {\ov{{\operatorname{Conv}}}}
  \big( \big\{ w (h g h^{-1} ) \colon h \in G \big\} \big)$.

\begin{thm}[Haagerup]\label{T_8Y03_UniqTr}
Let $G$ be a group
(taken with the discrete topology).
Suppose that $C^*_{\mathrm{r}} (G)$ has a unique tracial state.
Then, following the notation from Definition~\ref{D_8Y02_FprG},
$( C^*_{\mathrm{r}} (G), \, w_2, \, \ta_2)$
has the $g$-Powers property for every $g \in G \SM \{ 1 \}$.
\end{thm}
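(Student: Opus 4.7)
The plan is to mirror exactly the proof of Theorem~\ref{T_8Y03_Simple}: the assertion is one implication among the equivalent conditions in a theorem of Haagerup in~\cite{Hgrp}, and proving it is a matter of citation. Theorem~4.5 of~\cite{Hgrp} (or its companion there dealing with the unique trace property) lists as equivalent conditions both (a) that $C^*_{\mathrm{r}}(G)$ has a unique tracial state and (b) that for each $g \in G \SM \{1\}$ one has
\[
0 \in \ov{\operatorname{Conv}}\bigl( \bigl\{ w_2(h g h^{-1}) \colon h \in G \bigr\} \bigr).
\]

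By the remark immediately following Definition~\ref{D_8Y02_gPowers}, condition~(b) is equivalent to the $g$-Powers property for $(C^*_{\mathrm{r}}(G), w_2, \ta_2)$. Thus the proof reduces to invoking the implication (a)$\Rightarrow$(b) in Haagerup's theorem and then applying this remark; no further ingredients are needed.

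The only routine point to verify is the passage between the convex-hull condition and the equal-weight averages that appear in Definition~\ref{D_8Y02_gPowers}. If $0$ lies in the norm closure of $\operatorname{Conv}\bigl( \bigl\{ w_2(h g h^{-1}) \colon h \in G \bigr\} \bigr)$, then for every $\ep > 0$ one finds rational convex coefficients $t_j \in \Q$ with $t_j \geq 0$ summing to~$1$, together with elements $h_1, \ldots, h_m \in G$, such that $\bigl\| \sum_j t_j w_2(h_j g h_j^{-1}) \bigr\| < \ep$; clearing denominators and listing each $h_j$ with the appropriate multiplicity produces the required equal-weight average of norm less than~$\ep$. The converse is trivial. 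I do not foresee any substantive obstacle: all of the actual work is already done in~\cite{KlKnn,Hgrp}, and the present theorem is an immediate translation parallel to Theorem~\ref{T_8Y03_Simple}.
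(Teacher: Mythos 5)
Your proposal is correct and matches the paper's proof, which simply cites the implication from (i) to (iv) in Theorem~5.2 of~\cite{Hgrp} (the ``companion'' to Theorem~4.5 that you anticipated) together with Remark~\ref{R_9224_CStarR}. Your extra paragraph on passing between the closed-convex-hull formulation and the equal-weight averages of Definition~\ref{D_8Y02_gPowers} is the standard routine verification alluded to in the remark following that definition and is fine.
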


\begin{proof}
By Remark~\ref{R_9224_CStarR},
this is the implication from (i) to~(iv)
in Theorem~5.2 of~\cite{Hgrp}.
\end{proof}

It is clear that if $(A, w, \ta)$
has the Powers property,
then $(A, w, \ta)$
has the $g$-Powers property for every $g \in G \setminus \{ 1 \}$.
Since there are
examples of countable groups~$G$
such that $C^*_{\mathrm{r}} (G)$ has a unique tracial state
but is not simple
(Theorem~D of~\cite{LBdc}),
the converse is false.

\begin{prp}\label{P_8Y07_HasgPowers}
Let $G$ be a group
(taken with the discrete topology).
Suppose that $C^*_{\mathrm{r}} (G)$ has a unique tracial state.
Let $p \in (1, \I)$.
Then:
\begin{enumerate}
%
\item\label{Item_8Y07_HasgPowers_Fpr}
The triple $(F^p_{\mathrm{r}} (G), \, w_p, \, \ta_p )$
of Definition~\ref{D_8Y02_FprG}
has the $g$-Powers property for every $g \in G \SM \{ 1 \}$.
\item\label{Item_8Y07_HasgPowers_Bpstar}
The triple $(B^{p, *}_{\mathrm{r}} (G), \, w_{p, *}, \, \ta_{p, *} )$
of Definition~\ref{D_8Y02_SAFp}
has the $g$-Powers property for every $g \in G \SM \{ 1 \}$.
\end{enumerate}
\end{prp}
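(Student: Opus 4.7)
The plan is to prove Proposition~\ref{P_8Y07_HasgPowers} by mirroring the structure of the proof of Proposition~\ref{P_8Y07_HasPowers}, with the only change that Theorem~\ref{T_8Y03_Simple} is replaced by its single-element counterpart Theorem~\ref{T_8Y03_UniqTr}. Since the $g$-Powers property is just the Powers property applied to the singleton $S = \{g\}$, all the norm estimates needed at each step involve a single convex combination of isometries, and no new combinatorics arise.

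First, I would dispatch part~(\ref{Item_8Y07_HasgPowers_Fpr}). For $p = 2$, Theorem~\ref{T_8Y03_UniqTr} combined with Remark~\ref{R_9224_CStarR} gives the conclusion directly. For $p \in (1, 2)$, fix $g \in G \SM \{1\}$ and $\ep > 0$, set $\ld = 2(1 - \tfrac{1}{p}) \in (0,1)$, choose $\dt > 0$ with $\dt^{\ld} < \ep$, and use Theorem~\ref{T_8Y03_UniqTr} to produce $h_1, \ldots, h_n \in G$ with
\[
\Biggl\| \frac{1}{n} \sum_{j = 1}^n w_2 (h_j g h_j^{-1}) \Biggr\| < \dt.
\]
Apply the Riesz-Thorin Interpolation Theorem exactly as in the proof of Proposition~\ref{P_8Y07_HasPowers}, with $p_0 = q_0 = 1$, $p_1 = q_1 = 2$, $M_0 = 1$, and $M_1 = \dt$, to the operator $\tfrac{1}{n}\sum_j w(h_j g h_j^{-1})$ acting on $l^1(G) + l^2(G)$. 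This yields the estimate $\dt^{\ld} < \ep$ on $l^p(G)$. For $p \in (2, \infty)$, I pass to the conjugate exponent $q \in (1,2)$, apply the case already done to $g^{-1}$, and use the fact that $w_p(h)' = w_q(h^{-1})$ under the standard duality of Notation~\ref{N_8Y03_Duality} together with the equality of norms for dual operators.

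For part~(\ref{Item_8Y07_HasgPowers_Bpstar}), given $g \in G \SM \{1\}$ and $\ep > 0$, I apply part~(\ref{Item_8Y07_HasgPowers_Fpr}) twice, once to $g$ and once to $g^{-1}$, and intersect the resulting conjugating families (by taking the concatenation of the two tuples $(h_1, \ldots, h_n)$ and repeating terms if necessary, the averaged norm of the combined tuple is dominated by the maximum of the two original averaged norms). By the definition of the $\| \cdot \|_{p, *}$ norm and the identity $u_k^* = u_{k^{-1}}$ in $\C[G]$ (as in the last paragraph of the proof of Proposition~\ref{P_8Y07_HasPowers}), this controls $\bigl\| \tfrac{1}{n} \sum_j w_{p, *}(h_j g h_j^{-1}) \bigr\|$ by the maximum of two quantities, each less than $\ep$.

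There is no real obstacle here: the only adaptation needed is replacing a finite set $S$ by a single element, which trivially simplifies the argument. The proof is essentially a verbatim transcription of the proof of Proposition~\ref{P_8Y07_HasPowers} with the single-element input supplied by Theorem~\ref{T_8Y03_UniqTr} in place of Theorem~\ref{T_8Y03_Simple}.
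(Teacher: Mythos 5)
Part~(\ref{Item_8Y07_HasgPowers_Fpr}) of your proposal is correct and is exactly the paper's argument: run the proof of Proposition \ref{P_8Y07_HasPowers}(\ref{Item_8Y07_Fpr}) with $S = \{ g \}$ and Theorem~\ref{T_8Y03_UniqTr} in place of Theorem~\ref{T_8Y03_Simple}, and handle $p \in (2, \I)$ by duality starting from the $g^{-1}$-Powers property for the conjugate exponent.

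Part~(\ref{Item_8Y07_HasgPowers_Bpstar}) has a genuine gap, and it is precisely the point where the single-element property differs from the set version. You apply part~(\ref{Item_8Y07_HasgPowers_Fpr}) to $g$ and to $g^{-1}$ separately, obtaining two \emph{different} tuples $(h_1, \ldots, h_n)$ and $(h'_1, \ldots, h'_m)$, and then claim that concatenating them yields a tuple whose averaged norm is dominated by the maximum of the two original averaged norms. This is false: for the element $g$, the concatenated average contains the block $\frac{1}{n + m} \sum_{k} w_p (h'_k g (h'_k)^{-1})$, and the $h'_k$ were chosen to control conjugates of $g^{-1}$, not of $g$; each term is an isometry, so this block only obeys the trivial bound $m / (n + m)$, and the concatenated average for $g$ is bounded only by roughly $\tfrac{1}{2} (\ep + 1)$. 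In the proof of Proposition \ref{P_8Y07_HasPowers}(\ref{Item_8Y07_Bpstar}) this issue does not arise because the Powers property is applied to the finite set $R = S \cup \{ g^{-1} \colon g \in S \}$, producing one tuple that works for $g$ and $g^{-1}$ simultaneously; the $g$-Powers property gives no such simultaneous tuple. What you need, since $\| \cdot \|_{p, *}$ is a maximum of two operator norms evaluated on the \emph{same} element of $\C [G]$, is a single tuple $(h_1, \ldots, h_n)$ controlling both $\bigl\| \frac{1}{n} \sum_j w_p (h_j g h_j^{-1}) \bigr\|$ and $\bigl\| \frac{1}{n} \sum_j w_p (h_j g^{-1} h_j^{-1}) \bigr\|$. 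The paper obtains this by taking the tuple from Theorem~\ref{T_8Y03_UniqTr} for $g$ alone and interpolating twice: once between $l^1$ and $l^2$ to control the first quantity on $l^p$, and once between $l^{\I}$ and $l^2$ to control $\frac{1}{n} \sum_j w_q (h_j g h_j^{-1})$ on the conjugate space $l^q$, which dualizes to the second quantity. A shorter repair of your argument is available: since $w_2 (h_j g^{-1} h_j^{-1}) = w_2 (h_j g h_j^{-1})^*$ and the C*-norm is $*$-invariant, the tuple produced by Theorem~\ref{T_8Y03_UniqTr} for $g$ automatically satisfies the same estimate for $g^{-1}$ at $p = 2$, so a single Riesz--Thorin interpolation applied to both elements with that one tuple gives what is needed. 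Either way, the concatenation step as written must be removed.
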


\begin{proof}
We prove~(\ref{Item_8Y07_HasgPowers_Fpr}).
For $p \in (1, 2)$,
and using Theorem~\ref{T_8Y03_UniqTr}
in place of Theorem~\ref{T_8Y03_Simple},
the proof is essentially the same as the proof of
Proposition \ref{P_8Y07_HasPowers}(\ref{Item_8Y07_Fpr}).
For $p \in (2, \I)$,
use the proof of
the corresponding case
in Proposition \ref{P_8Y07_HasPowers}(\ref{Item_8Y07_Fpr}),
but starting with the fact that,
with $\frac{1}{p} + \frac{1}{q} = 1$,
the triple $(F^q_{\mathrm{r}} (G), \, w_q, \, \ta_q )$
has the $g^{-1}$-Powers property.

Now we prove~(\ref{Item_8Y07_HasgPowers_Bpstar}).
Assume first that $p \in (1, 2)$.
Let $g \in G \SM \{ 1 \}$ and let $\ep > 0$.
Define $\ld = 2 \big( 1 - \frac{1}{p} \big)$,
which is in $(0, 1)$.
Choose $\dt > 0$ such that
$\dt^{\ld} < \ep$.
By Theorem~\ref{T_8Y03_UniqTr},
there are $n \in \N$ and $h_1, h_2, ..., h_n \in G$
such that
\begin{equation}\label{Eq_UTrNormEst}
\Biggl\| \frac{1}{n} \sum_{j = 1}^n w_2 (h_j g h_j^{-1}) \Biggr\| < \dt.
\end{equation}

Apply the Riesz-Thorin Interpolation Theorem
(Theorem 6.27 of~\cite{Fld2}),
as in the proof
of Proposition \ref{P_8Y07_HasPowers}(\ref{Item_8Y07_Fpr}).
We get
\begin{equation}\label{Eq_9226_Star}
\Bigg\| \frac{1}{n} \sum_{j = 1}^n w_p (h_j g h_j^{-1}) \Bigg\|
 < \ep.
\end{equation}
Next, in the notation of Theorem 6.27 of~\cite{Fld2},
take
$X = Y = G$,
both measures to be counting measure,
\[
p_0 = q_0 = \I,
\qquad
p_1 = q_1 = 2,
\qquad
M_0 = 1,
\andeqn
M_1 = \dt.
\]
Let $F (G)$ and $w (k) \colon F (G) \to F (G)$
be as in the proof
of Proposition \ref{P_8Y07_HasPowers}(\ref{Item_8Y07_Fpr}).
Let $T$ be as there with $l^{\I} (G)$ in place of $l^1 (G)$.
For $\xi \in l^{\I} (G)$ we trivially have
$\| T \xi \|_1 \leq M_0 \| \xi \|_1$,
and (\ref{Eq_UTrNormEst}) implies
$\| T \xi \|_2 \leq M_1 \| \xi \|_2$.
Apply Theorem 6.27 of~\cite{Fld2} with $t = \ld$,
so that the numbers $p_t$ and~$q_t$ there are
both equal to $q = \big( 1 - \frac{1}{p} \big)^{-1}$,
the conjugate exponent to~$p$.
As in the proof
of Proposition \ref{P_8Y07_HasPowers}(\ref{Item_8Y07_Fpr}),
for $\xi \in l^{q} (G)$
we get $\| T \xi \|_q \leq \dt^{\ld} \| \xi \|_q$.
So, with $u_g$ as in Definition~\ref{D_8Y02_FprG},
\begin{equation}\label{Eq_NewNewStar}
\Biggl\| \rh_q \Biggl(
    \frac{1}{n} \sum_{j = 1}^n u_{h_j g h_j^{-1}} \Biggr)
           \Biggr\|
 = \Bigg\| \frac{1}{n} \sum_{j = 1}^n w_q (h_j g h_j^{-1}) \Bigg\|
 \leq \dt^{\ld}
 < \ep.
\end{equation}

Recall that the standard isomorphism
$l^p (G)' \cong l^q (G)$ (see Notation~(\ref{N_8Y03_Duality}))
identifies $w_p (h)$ with $w_q (h^{-1})'$ for all $h \in G$.
Using this fact at the second step
and the definition of the adjoint at the first step, we get
\[
\rh_p \Biggl( \Biggl[
    \frac{1}{n} \sum_{j = 1}^n u_{h_j g h_j^{-1}} \Biggr]^* \Biggr)
  = \frac{1}{n} \sum_{j = 1}^n w_p (h_j g^{-1} h_j^{-1})
  = \Biggl[ \frac{1}{n} \sum_{j = 1}^n w_q (h_j g h_j^{-1}) \Biggr]'.
\]
Therefore,
since dual operators have the same norm,
(\ref{Eq_NewNewStar}) implies
\[
\Biggl\| \rh_p \Biggl( \Biggl[
    \frac{1}{n} \sum_{j = 1}^n u_{h_j g h_j^{-1}} \Biggr]^* \Biggr)
           \Biggr\|
 = \Biggl\| \frac{1}{n} \sum_{j = 1}^n w_q (h_j g h_j^{-1})
           \Biggr\|
 < \ep.
\]
Combining this with~(\ref{Eq_9226_Star}) gives
\begin{align*}
&\Biggl\| \frac{1}{n}
      \sum_{j = 1}^n w_{p, *} (h_j g h_j^{-1}) \Biggr\|
\\
& \hspace*{3em} {\mbox{}}
 = \max \Biggl( \Biggl\| \rh_p \Biggl(
    \frac{1}{n} \sum_{j = 1}^n u_{h_j g h_j^{-1}} \Biggr)
           \Biggr\|, \,
    \Biggl\| \rh_p \Biggl( \Biggl[
    \frac{1}{n} \sum_{j = 1}^n u_{h_j g h_j^{-1}} \Biggr]^* \Biggr)
           \Biggr\| \Biggr)
  < \ep.
\end{align*}
This proves~(\ref{Item_8Y07_HasgPowers_Bpstar})
when $p \in (1, 2)$.

The proof for $p \in (2, \I)$ is essentially the same,
exchanging $1$ and $\I$ in the argument just given.
This finishes the proof of~(\ref{Item_8Y07_HasgPowers_Bpstar}).
\end{proof}

The proof of the following lemma is the same as the argument in
the remark at the end of~\cite{Pwr}.

\begin{lem}\label{L_6Z16_PPTrZ}
Let $G$ be a group
(taken with the discrete topology),
and let $(A, w, \ta)$
be a reduced group Banach algebra for~$G$
(Definition~\ref{N_8X29_AbstractRedG}).
Let $g \in G$,
and suppose that $(A, w, \sm)$
has the $g$-Powers property
(Definition~\ref{D_8Y02_gPowers}).
Then for any continuous linear functional
$\sm \colon A \to \C$
with $\sm (b a) = \sm (a b)$ for all $a, b \in A$,
we have $\sm (w (g)) = 0$.
\end{lem}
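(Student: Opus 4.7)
The plan is to mimic the standard averaging argument: apply $\sm$ to the small-norm convex combination provided by the $g$-Powers property, and use the trace property of $\sm$ to see that each term equals $\sm(w(g))$, so that $\sm(w(g))$ itself must be small.

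First I would fix $\ep > 0$ and invoke the $g$-Powers property (Definition~\ref{D_8Y02_gPowers}) to produce $n \in \N$ and $h_1, \ldots, h_n \in G$ with
\[
\Biggl\| \frac{1}{n} \sum_{j=1}^n w(h_j g h_j^{-1}) \Biggr\| < \ep.
\]
Next I would observe that because $w$ is a group homomorphism, $w(h_j g h_j^{-1}) = w(h_j) w(g) w(h_j)^{-1}$, and by the trace identity $\sm(ba) = \sm(ab)$ applied with $a = w(g) w(h_j)^{-1}$ and $b = w(h_j)$, we get $\sm(w(h_j g h_j^{-1})) = \sm(w(g))$ for every $j$. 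Therefore
\[
\sm (w(g))
 = \sm \Biggl( \frac{1}{n} \sum_{j=1}^n w(h_j g h_j^{-1}) \Biggr),
\]
and bounding the right-hand side by continuity of $\sm$ gives $|\sm(w(g))| \leq \| \sm \| \cdot \ep$. Since $\ep > 0$ was arbitrary, $\sm(w(g)) = 0$.

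There is essentially no obstacle here — the only things being used are the definition of the $g$-Powers property, continuity of $\sm$, and the trace identity. I would just need to be a little careful to write the conjugation step cleanly (noting that $w(h_j)^{-1} = w(h_j^{-1})$ makes sense because $w$ takes values in $\Isom(A)$) so that the appeal to the trace property is unambiguous.
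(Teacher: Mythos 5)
Your proof is correct and is essentially the same averaging argument the paper gives: invoke the $g$-Powers property, use the trace identity to see each conjugate has the same value of $\sm$, and let $\ep \to 0$. The only cosmetic difference is that the paper preemptively shrinks the norm bound to $\ep/(\|\sm\|+1)$ to land exactly on $|\sm(w(g))| < \ep$, whereas you absorb the factor $\|\sm\|$ at the end; both are fine.
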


\begin{proof}
Let $\ep > 0$;
we show that $| \sm (w (g)) | < \ep$.
By definition,
there are $n \in \N$ and $h_1, h_2, ..., h_n \in G$
such that
\[
\Bigg\| \frac{1}{n} \sum_{j = 1}^n w (h_j g h_j^{-1}) \Bigg\|
 < \frac{\ep}{\| \sm \| + 1}.
\]
For all $h \in G$, using the trace property at the second step,
we get
\[
\sm \big( w (h g h^{-1}) \big)
 = \sm \big( w (h) w (g) w (h)^{-1} \big)
 = \sm \big( w (h)^{-1} \sm (w (h) w (g) \big)
 = \sm (w (g) ).
\]
It follows that
\[
| \sm (w (g)) |
  = \Bigg| \sm \Bigg( \frac{1}{n} \sum_{j = 1}^n
          w (h_j g h_j^{-1}) \Bigg) \Bigg|
  \leq \| \sm \| \left( \frac{\ep}{\| \sm \| + 1} \right)
  < \ep.
\]
This completes the proof.
\end{proof}

\begin{cor}\label{AllgPowers}
Let $G$ be a group
(taken with the discrete topology),
and let $(A, w, \ta)$
be a reduced group Banach algebra for~$G$
(Definition~\ref{N_8X29_AbstractRedG}).
Suppose that $(A, w, \ta)$
has the $g$-Powers property
(Definition~\ref{D_8Y02_gPowers})
for all $g \in G \SM \{ 1 \}$.
Then for any continuous linear functional
$\sm \colon A \to \C$
with $\sm (b a) = \sm (a b)$ for all $a, b \in A$,
we have $\sm = \sm (1) \cdot \ta$.
\end{cor}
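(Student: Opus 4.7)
The plan is to apply \Lem{L_6Z16_PPTrZ} element by element and then use the density and normalization conditions built into the definition of a reduced group Banach algebra to conclude that the continuous trace $\sm$ is determined by its value at~$1$.

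First I would dispose of the non-identity group elements. For every $g \in G \SM \{ 1 \}$, the hypothesis gives the $g$-Powers property, so \Lem{L_6Z16_PPTrZ} applies directly and yields $\sm (w (g)) = 0$. On the other hand, $w (1) = 1$ because $w$ is a group \hm{} into $\Isom (A)$, so $\sm (w (1)) = \sm (1)$. This shows $\sm$ agrees with the functional $g \mapsto \sm (1)$ on the trivial element and vanishes on all other generators.

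Next I would compare this with $\sm (1) \cdot \ta$ on the same generators. By Definition~\ref{N_8X29_AbstractRedG}(\ref{Item_8X29_AbstractRedG_Inj}), $\ta (w (g)) = 0$ for $g \neq 1$, and $\ta (w (1)) = \ta (1) = 1$ since $\ta$ is a unital trace. Hence the two continuous functionals $\sm$ and $\sm (1) \cdot \ta$ take the same value on $w (g)$ for every $g \in G$.

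Finally I would extend by linearity and continuity. Both $\sm$ and $\sm (1) \cdot \ta$ are \ct{} linear functionals on~$A$, so they agree on $\spn \bigl( \{ w (g) \colon g \in G \} \bigr)$, and by Definition~\ref{N_8X29_AbstractRedG}(\ref{Item_8X29_AbstractRedG_Density}) this subspace is dense in~$A$. Therefore $\sm = \sm (1) \cdot \ta$ throughout~$A$. There is no real obstacle here, since the work has already been done in \Lem{L_6Z16_PPTrZ}; the corollary is just the packaging of that pointwise vanishing statement together with the three structural conditions defining a reduced group Banach algebra.
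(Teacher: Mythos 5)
Your proof is correct and follows essentially the same route as the paper: apply Lemma~\ref{L_6Z16_PPTrZ} to get $\sm(w(g)) = 0$ for $g \neq 1$, match values at $g = 1$ using unitality, and conclude by density and continuity. No issues.
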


\begin{proof}
Since
$A = {\overline{\spn}}
   \bigl( \bigl\{ w (g) \colon g \in G \bigr\} \bigr)$,
it suffices to prove that
$\sm (w (g)) = \sm (1) \ta (w (g))$ for all $g \in G$.
Since $w (1) = 1$ and $\ta$ is a unital trace,
this is true for $g = 1$.
For $g \in G \setminus \{ 1 \}$,
we have $\sm (1) \ta (w (g)) = 0$
by
Definition \ref{N_8X29_AbstractRedG}(\ref{Item_8X29_AbstractRedG_Inj})
and $\sm (w (g)) = 0$ by Lemma~\ref{L_6Z16_PPTrZ}.
\end{proof}

As a corollary, we get the following theorem.

\begin{thm}\label{T_8Y07_UniqToUniq}
Let $G$ be a group
(taken with the discrete topology).
Suppose that $C^*_{\mathrm{r}} (G)$ has a unique tracial state.
Let $p \in (1, \I)$.
Then:
\begin{enumerate}
%
\item\label{Item_8Y07_UniqToUniq_Fpr}
The algebra $F^p_{\mathrm{r}} (G)$
of Definition~\ref{D_8Y02_FprG}
has a unique unital trace.
\item\label{Item_8Y07_UniqToUniq_Bpstar}
The algebra $B^{p, *}_{\mathrm{r}} (G)$
of Definition~\ref{D_8Y02_SAFp}
has a unique unital trace.
\end{enumerate}
\end{thm}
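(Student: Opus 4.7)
The plan is to combine the machinery already built in the excerpt in exactly the same way that Theorem~\ref{T_8Y07_SimplToSimple} combines Proposition~\ref{P_8Y07_HasPowers} with Proposition~\ref{L_6Z16_PPImpSimple}. Here, Proposition~\ref{P_8Y07_HasgPowers} will play the role of the first ingredient and Corollary~\ref{AllgPowers} will play the role of the second.

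For part~(\ref{Item_8Y07_UniqToUniq_Fpr}), I would first note that Lemma~\ref{L_8Y02_FprGIsRedAlg} makes $(F^p_{\mathrm{r}}(G), w_p, \ta_p)$ a reduced group Banach algebra for~$G$ in the sense of Definition~\ref{N_8X29_AbstractRedG}. Since $C^*_{\mathrm{r}}(G)$ is assumed to have a unique tracial state, Proposition \ref{P_8Y07_HasgPowers}(\ref{Item_8Y07_HasgPowers_Fpr}) tells us that this triple has the $g$-Powers property for every $g \in G \SM \{1\}$. Applying Corollary~\ref{AllgPowers}, every continuous tracial linear functional $\sm \colon F^p_{\mathrm{r}}(G) \to \C$ satisfies $\sm = \sm(1) \cdot \ta_p$. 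If $\sm$ is a unital trace in the sense of Definition~\ref{D_8X29_Trace}, then by definition $\sm(1) = 1$ and $\sm$ is continuous, so $\sm = \ta_p$.

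The argument for part~(\ref{Item_8Y07_UniqToUniq_Bpstar}) is identical, with Lemma \ref{L_8Y02_BpStarsRedAlg}(\ref{L_8Y02_BpStarsRedAlg_Red}) providing the reduced group Banach algebra structure on $(B^{p,*}_{\mathrm{r}}(G), w_{p,*}, \ta_{p,*})$ and Proposition \ref{P_8Y07_HasgPowers}(\ref{Item_8Y07_HasgPowers_Bpstar}) providing the $g$-Powers property for all $g \in G \SM \{1\}$. Corollary~\ref{AllgPowers} then forces any unital trace to equal $\ta_{p,*}$.

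There is really no obstacle to overcome at this stage; all the substantive work was already done in Proposition~\ref{P_8Y07_HasgPowers} (where the Riesz-Thorin interpolation argument was carried out) and in the abstract Lemma~\ref{L_6Z16_PPTrZ} / Corollary~\ref{AllgPowers}. The proof of the theorem is therefore essentially a one-line citation, parallel in form to the proof of Theorem~\ref{T_8Y07_SimplToSimple}, and I would present it that way.
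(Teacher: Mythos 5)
Your proposal is correct and follows exactly the same route as the paper: cite Lemma~\ref{L_8Y02_FprGIsRedAlg} and Lemma \ref{L_8Y02_BpStarsRedAlg}(\ref{L_8Y02_BpStarsRedAlg_Red}) for the reduced group Banach algebra structure, then combine Proposition~\ref{P_8Y07_HasgPowers} with Corollary~\ref{AllgPowers}, noting that a unital trace $\sm$ has $\sm(1) = 1$ and hence equals $\ta_p$ (resp.\ $\ta_{p,*}$). Nothing is missing.
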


\begin{proof}
Justified by Lemma~\ref{L_8Y02_FprGIsRedAlg}
(for~(\ref{Item_8Y07_UniqToUniq_Fpr}))
and
Lemma \ref{L_8Y02_BpStarsRedAlg}(\ref{L_8Y02_BpStarsRedAlg_Red})
(for~(\ref{Item_8Y07_UniqToUniq_Bpstar})),
combine Proposition~\ref{P_8Y07_HasgPowers}
and Corollary~\ref{AllgPowers}.
\end{proof}

Theorem \ref{T_8Y07_UniqToUniq}(\ref{Item_8Y07_UniqToUniq_Fpr})
implies group algebra case
(but not the general statement for crossed products)
in Theorem 3.5 of~\cite{HjzPya}.

\begin{cor}\label{C_9903_ForUniqTr}
Let $G$ be a group,
taken with the discrete topology.
Suppose that $G$ has no nontrivial amenable normal subgroups.
Let $p \in (1, \I)$.
Then each of $F^p_{\mathrm{r}} (G)$ and $B^{p, *}_{\mathrm{r}} (G)$
has a unique unital trace.
\end{cor}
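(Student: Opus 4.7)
The plan is to mirror the strategy of Corollary~\ref{C_9903_ToSimple}: reduce the assertion about $F^p_{\mathrm{r}} (G)$ and $B^{p, *}_{\mathrm{r}} (G)$ to the corresponding assertion about $C^*_{\mathrm{r}} (G)$, via Theorem~\ref{T_8Y07_UniqToUniq}, and then invoke a known characterization to see that the hypothesis on normal subgroups forces $C^*_{\mathrm{r}} (G)$ to have a unique tracial state. Concretely, the two ingredients are: (a)~Theorem~\ref{T_8Y07_UniqToUniq}, which transports uniqueness of the trace from $C^*_{\mathrm{r}} (G)$ to $F^p_{\mathrm{r}} (G)$ and $B^{p, *}_{\mathrm{r}} (G)$ for $p \in (1, \I)$; and (b)~the Breuillard--Kalantar--Kennedy--Ozawa theorem identifying the groups for which $C^*_{\mathrm{r}} (G)$ has a unique tracial state as precisely those with trivial amenable radical, i.e.\ those having no nontrivial amenable normal subgroup.

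The first step is to observe that the hypothesis that $G$ has no nontrivial amenable normal subgroup is exactly the statement that the amenable radical of~$G$ is trivial. By the characterization mentioned (which is also available in the framework of~\cite{KlKnn} combined with the equivalences in Theorem~5.2 of~\cite{Hgrp}, in the same way that Corollary~\ref{C_9903_ToSimple} combined Theorem~6.2 of~\cite{KlKnn} with Theorem~\ref{T_8Y07_SimplToSimple}), this implies that $C^*_{\mathrm{r}} (G)$ has a unique tracial state. Once this is in hand, Theorem~\ref{T_8Y07_UniqToUniq} applies directly to give that $F^p_{\mathrm{r}} (G)$ and $B^{p, *}_{\mathrm{r}} (G)$ each have a unique unital trace, which is precisely the conclusion sought.

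There is no real obstacle here: the proof is a one-line combination of the cited characterization with Theorem~\ref{T_8Y07_UniqToUniq}. The only delicate point is choosing the correct external reference for step~(b); since the paper already leans on~\cite{KlKnn} and~\cite{Hgrp} for the analogous simplicity corollary, the cleanest write-up is to cite the unique-trace analog in the same sources (or, equivalently, the original Breuillard--Kalantar--Kennedy--Ozawa result).
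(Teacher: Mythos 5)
Your proposal matches the paper's proof exactly: the paper also notes that the hypothesis says the amenable radical of~$G$ is trivial, invokes the Breuillard--Kalantar--Kennedy--Ozawa result (Corollary~4.3 of~\cite{BrKlKnOz}) to conclude that $C^*_{\mathrm{r}} (G)$ has a unique tracial state, and then applies Theorem~\ref{T_8Y07_UniqToUniq}. No gaps; the argument is correct as written.
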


\begin{proof}
Recall (see, for example,
before Proposition~2.8 of~\cite{BrKlKnOz})
that the amenable radical of~$G$
is the largest amenable normal subgroup of~$G$.
Now combine Theorem~\ref{T_8Y07_UniqToUniq}
with Corollary~4.3 of~\cite{BrKlKnOz}.
\end{proof}

\section{Orlicz functions}\label{Sec_Orlicz}

Orlicz sequence spaces, as described in
Sections 4.c.1--4.c.3 of~\cite{LndTzf1},
are a generalization of $l^p$~spaces for $p \in [1, \I]$.
They are sufficiently symmetric to support a regular representation
of a countable group.
In this section,
we show that simplicity of $C^*_{\mathrm{r}} (G)$
implies simplicity of the analogous algebra defined on any
reflexive Orlicz sequence space,
and similarly for the unique trace property.
Orlicz sequence spaces come in great variety;
see, for example, Section 4.c.3 of~\cite{LndTzf1}.
We mention just one type of example;
the facts about it are gotten by combining, in~\cite{LndTzf1},
Examples 4.c.6 and 4.c.7, Theorem 4.a.9,
and the remark after Proposition 4.b.3,
and one must look at the constructions to see that
arbitrary values of $p_0$ and~$p_1$ can occur.
For any $p_0, p_1 \in (1, \I)$
with $p_0 \leq p_1$,
there is an Orlicz sequence space~$E$
which has subspaces isomorphic to~$l^p$
exactly when $p_0 \leq p \leq p_1$,
does not have complemented subspaces isomorphic to~$l^p$
for any~$p$,
and such that every bounded linear map from $E$ to $l^p$
is compact if $p < p_0$
and every bounded linear map from $l^p$ to $E$
is compact if $p > p_1$.

Since we want a regular representation of the group~$G$,
we index our sequences by~$G$ rather than by~$\N$.
When $G$ is countable,
the spaces in the following definition are exactly those
at the beginning of Section~4.a of~\cite{LndTzf1},
where they are called $l_M$ and $h_M$.

\begin{dfn}\label{D_9226_OrliczGroupSpace}
Let $M \colon [0, \I) \to [0, \I)$
be \ct, nondecreasing, convex,
and satisfy $M (0) = 0$,
$M (t) > 0$ for $t > 0$,
and $\limi{t} M (t) = \I$.
(Such a function is called a
nondegenerate Orlicz function in Definition 4.a.1 of~\cite{LndTzf1}.)
Let $G$ be a countable group,
taken with the discrete topology.
For any family $\xi = (\xi_g)_{g \in G}$ of complex numbers,
define
\[
\| \xi \|_{M}
 = \inf \Biggl( \Biggl\{ \rh > 0 \colon
    \sum_{g \in G} M \bigl( \rh^{-1} | \xi_{g} | \bigr) \leq 1
           \Biggr\} \Biggr).
\]
Then we let $l^M (G)$,
the {\emph{Orlicz space of~$G$}}
(with parameter~$M$),
be the Banach space consisting
of all $\xi$ such that $\| \xi \|_{M} < \I$,
with the norm $\| \cdot \|_{M}$.

For $g \in G$,
let $\dt_{M, g} \in l^M (G)$ be the function
$\dt_{M, g} (g) = 1$
and $\dt_{M, g} (h) = 0$ for $h \in G \SM \{ g \}$.
We define $h^M (G)$ to be the closed linear span
\[
h^M (G) = {\overline{\spn}}
   \bigl( \bigl\{ \dt_{M, g} \colon g \in G \bigr\} \bigr).
\]
(This is not the definition in~\cite{LndTzf1},
but is equivalent to it by Proposition 4.a.2 of~\cite{LndTzf1}.)
The space~$h^M$ space may or may not be equal to $l^M (G)$.

We define the {\emph{left regular representation}}
of $G$ on $l^M (G)$
to be the function $w_{M, G} \colon G \to L ( l^M (G) )$
given by, for $\xi = (\xi_g)_{g \in G} \in l^M (G)$
and $h \in G$,
\[
\bigl( w_{M, G} (g) \xi \bigr)_h = \xi_{g^{-1} h},
\]
and we define the {\emph{left regular representation}}
of $G$ on $h^M (G)$ to be the
function $g \mapsto w_{M, G} (g) |_{h^M (G)} \in L (h^M (G))$.
\end{dfn}

A general Orlicz function
is not required to satisfy $M (t) > 0$ for $t > 0$,
and need not be strictly increasing.

\begin{ntn}\label{N_9817_F}
We denote by $\cF$ the set of all
\ct{} strictly increasing bijections from $[0, \I)$ to $[0, \I)$.
\end{ntn}

\begin{lem}\label{L_9817_FProp}
Let $\cF$ be as in Notation~\ref{N_9817_F}.
Then:
\begin{enumerate}
%
\item\label{Item_L_9817_FProp_Limi}
If $M \in \cF$ then $\limi{t} M (t) = \I$.
\item\label{Item_L_9817_FProp_Inv}
Every function in $\cF$
is invertible,
and its inverse is in~$\cF$.
\item\label{Item_L_9817_FProp_Orl}
A function $M \colon [0, \I) \to [0, \I)$
is a nondegenerate Orlicz function \ifo{}
$M \in \cF$ and $M$ is convex.
\item\label{Item_L_9817_FProp_InvOrl}
A function $\ph \colon [0, \I) \to [0, \I)$
is the inverse of a nondegenerate Orlicz function \ifo{}
$\ph \in \cF$ and $\ph$ is concave.
\end{enumerate}
\end{lem}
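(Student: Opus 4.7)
The plan is to handle the four parts roughly in the order they are listed, reusing earlier parts for later ones, since (2) feeds into (4) and the forward direction of (3) is the only point that requires more than unpacking definitions.

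For part~(\ref{Item_L_9817_FProp_Limi}), I would argue directly from surjectivity: since $M\colon [0,\I)\to [0,\I)$ is a bijection, for every $N>0$ there is some $t_N\in [0,\I)$ with $M(t_N)=N$; because $M$ is strictly increasing, $M(t)\geq N$ for all $t\geq t_N$, so $\limi{t} M(t)=\I$. For part~(\ref{Item_L_9817_FProp_Inv}), I would note that a continuous strictly increasing bijection between intervals is a homeomorphism, so $M^{-1}$ is continuous, and it is obviously a strictly increasing bijection $[0,\I)\to [0,\I)$, hence lies in $\cF$.

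Part~(\ref{Item_L_9817_FProp_Orl}) is the place where there is something to prove. The implication $(\Leftarrow)$ is immediate: if $M\in\cF$ is convex, then $M$ is continuous, nondecreasing, convex, and (being a strictly increasing bijection) satisfies $M(0)=0$ and $M(t)>0$ for $t>0$, while $\limi{t}M(t)=\I$ by~(\ref{Item_L_9817_FProp_Limi}). For $(\Rightarrow)$, the main content is that a nondegenerate Orlicz function is automatically strictly increasing and surjective. Surjectivity onto $[0,\I)$ follows from continuity, $M(0)=0$, and $\limi{t}M(t)=\I$ by the intermediate value theorem. The strict increase is the one nontrivial point: suppose $0\leq s<t$ with $M(s)=M(t)$; I would pick any $s'\in[0,s)$ and use convexity of $M$ written as
\[
M(s)\leq \frac{t-s}{t-s'}M(s')+\frac{s-s'}{t-s'}M(t),
\]
which, together with $M(s)=M(t)$, collapses to $M(s)\leq M(s')$; combined with $M$ being nondecreasing this gives $M(s')=M(s)$. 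Applying this with $s'=0$ yields $M(s)=0$, contradicting $M(t)>0$ for $t>0$. Hence $M$ is strictly increasing, so $M\in\cF$.

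Part~(\ref{Item_L_9817_FProp_InvOrl}) then follows by combining the previous parts with the standard duality between convexity and concavity under inversion of monotone bijections. By~(\ref{Item_L_9817_FProp_Inv}) the inverse of a nondegenerate Orlicz function lies in $\cF$, and by~(\ref{Item_L_9817_FProp_Orl}) the data $(\varphi\in\cF,\ \varphi^{-1}\text{ convex})$ characterizes when $\varphi^{-1}$ is a nondegenerate Orlicz function. So the claim reduces to: for $\varphi\in\cF$, the inverse $\varphi^{-1}$ is convex \ifo{} $\varphi$ is concave. I would prove this by writing the convexity inequality for $\varphi^{-1}$ at points $x_i=\varphi(y_i)$, applying the increasing function $\varphi$ to both sides, and observing that the resulting inequality is exactly the concavity of $\varphi$; the same argument read backwards gives the converse. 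The expected main obstacle is the strict-increase argument in part~(\ref{Item_L_9817_FProp_Orl}); everything else is essentially a bookkeeping exercise.
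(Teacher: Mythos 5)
Your proof is correct and follows essentially the same route as the paper's (which simply declares (1) and (2) immediate, derives (3) from the fact that convexity together with $M(t)>0$ for $t>0$ forces strict monotonicity, and then gets (4) as an immediate consequence); you have merely filled in the details the paper leaves to the reader, including the convexity computation for strict increase and the convex/concave duality under inversion, all of which check out.
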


\begin{proof}
Parts (\ref{Item_L_9817_FProp_Limi})
and~(\ref{Item_L_9817_FProp_Inv}) are immediate.
A nondegenerate Orlicz function $M$ as in
Definition~\ref{D_9226_OrliczGroupSpace}
is strictly increasing,
by convexity and since $M (t) > 0$ for $t > 0$.
Convexity further implies that $\limi{t} M (t) = \I$.
This is~(\ref{Item_L_9817_FProp_Orl}).
Part~(\ref{Item_L_9817_FProp_InvOrl}) is now immediate.
\end{proof}

\begin{prp}\label{P_9226_OrliczRedGpAlg}
Adopt the notation of Definition~\ref{D_9226_OrliczGroupSpace}.
Let $F^M_{\mathrm{r}} (G) \S L ( h^M (G) )$ be the closed linear span
\[
F^M_{\mathrm{r}} (G) = {\overline{\spn}}
   \bigl( \bigl\{ w_{M, G} (g) |_{h^M (G)}
      \colon g \in G \bigr\} \bigr).
\]
Define $\ta_{M, G} \colon F^M_{\mathrm{r}} (G) \to \C$
by as follows:
if $a (\dt_{M, 1}) = (\xi_g)_{g \in G}$,
then $\ta_{M, G} (a) = M^{-1} (1) \xi_1$.
Then
$\bigl( F^M_{\mathrm{r}} (G),
   \, w_{M, G} (\cdot) |_{h^M (G)}, \, \ta_{M, G} \bigr)$
is a reduced group Banach algebra for~$G$.
\end{prp}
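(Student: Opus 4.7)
The plan is to verify, essentially step by step in parallel with the proof of \Lem{L_8Y02_FprGIsRedAlg} (the $p\neq\I$ case), that the three conditions of \Def{N_8X29_AbstractRedG} hold, plus the background requirements that $w_{M,G}(g)|_{h^M(G)} \in \Isom(F^M_{\mathrm{r}}(G))$, that $F^M_{\mathrm{r}}(G)$ is a unital Banach algebra, and that $\ta_{M,G}$ is a unital trace.

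First, I would check the structural preliminaries. The formula $\bigl(w_{M,G}(g)\xi\bigr)_h=\xi_{g^{-1}h}$ simply permutes coordinates of $\xi$, and $\|\cdot\|_M$ depends only on the multiset $\{|\xi_g|\}_{g\in G}$, so $w_{M,G}(g)$ is an invertible isometry on $l^M(G)$; since $w_{M,G}(g)\dt_{M,h}=\dt_{M,gh}$, it preserves $h^M(G)$, giving an element of $\Isom(L(h^M(G)))$. The identity $w_{M,G}(g)w_{M,G}(h)=w_{M,G}(gh)$ shows $\spn\{w_{M,G}(g)|_{h^M(G)}\colon g\in G\}$ is a subalgebra, so its closure $F^M_{\mathrm{r}}(G)$ is a closed subalgebra of $L(h^M(G))$, hence a unital Banach algebra. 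Condition~(\ref{Item_8X29_AbstractRedG_Density}) of \Def{N_8X29_AbstractRedG} then holds by construction.

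Next I would handle the trace. Continuity of $\ta_{M,G}$ reduces to showing that the coordinate functional $\om\colon h^M(G)\to\C$, $\om(\xi)=\xi_1$, is bounded; the Orlicz estimate does this, because for $\rh>\|\xi\|_M$ we have $M(\rh^{-1}|\xi_1|)\le\sum_g M(\rh^{-1}|\xi_g|)\le 1$, so $|\xi_1|\le M^{-1}(1)\rh$, hence $\|\om\|\le M^{-1}(1)$. Then $\ta_{M,G}(a)=M^{-1}(1)\,\om(a\dt_{M,1})$ is continuous. For $\ta_{M,G}(1)$ and the trace property, I compute on generators: $w_{M,G}(gh)\dt_{M,1}=\dt_{M,gh}$, so $\ta_{M,G}(w_{M,G}(g)w_{M,G}(h))=M^{-1}(1)$ if $gh=1$ and $0$ otherwise; the same computation with $g$ and $h$ swapped shows $\ta_{M,G}(w_{M,G}(h)w_{M,G}(g))$ takes the same value (with $\ta_{M,G}(1)=1$ under the standard normalization $M(1)=1$). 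The trace property extends by bilinearity and continuity, giving Condition~(\ref{Item_8X29_AbstractRedG_Inj}) as the special case $h=1$, $g\ne 1$.

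For Condition~(\ref{Item_8X29_AbstractRedG_TrFaithful}), suppose $a\in F^M_{\mathrm{r}}(G)$ satisfies $\ta_{M,G}(aw_{M,G}(g))=0$ for all $g\in G$. Write $a\dt_{M,h}=(\et_g)_{g\in G}$. Then, mirroring the displayed computation in the proof of \Lem{L_8Y02_FprGIsRedAlg},
\[
\et_g=\om\bigl(w_{M,G}(g^{-1})a\dt_{M,h}\bigr)
     =M^{-1}(1)^{-1}\ta_{M,G}\bigl(w_{M,G}(g^{-1})aw_{M,G}(h)\bigr)
     =M^{-1}(1)^{-1}\ta_{M,G}\bigl(aw_{M,G}(hg^{-1})\bigr)=0,
\]
using the trace property at the third step. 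Hence $a\dt_{M,h}=0$ for every $h\in G$, and since $h^M(G)$ is by definition the closed linear span of $\{\dt_{M,h}\colon h\in G\}$, we conclude $a=0$. The main obstacle, such as it is, is just the verification that the coordinate functional is bounded in the Orlicz norm; no analog of the weak-$*$ trick used for $p=\I$ in \Lem{L_8Y02_FprGIsRedAlg} is needed because $h^M(G)$ is built as a norm closed span.
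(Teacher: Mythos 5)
Your proof is correct and takes essentially the same route as the paper's, which simply reduces the proposition to the argument for the case $p \in [1, \I)$ of Lemma~\ref{L_8Y02_FprGIsRedAlg} after noting that $w_{M, G} (g)$ is an isometry and that the coordinate functional is bounded on $l^M (G)$ (your Orlicz estimate $M \bigl( \rh^{-1} | \xi_1 | \bigr) \leq 1$ is exactly the point the paper leaves to the reader), with faithfulness following from norm density of $\spn \bigl( \{ \dt_{M, h} \colon h \in G \} \bigr)$ in $h^M (G)$. Your parenthetical normalization $M (1) = 1$ is in fact needed for $\ta_{M, G} (1) = 1$ with the formula $\ta_{M, G} (a) = M^{-1} (1) \xi_1$ as written (otherwise one should drop the factor $M^{-1} (1)$), so flagging it was the right call.
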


\begin{proof}
Using Lemma \ref{L_9817_FProp}(\ref{Item_L_9817_FProp_InvOrl}),
one sees that $M^{-1} (1)$ exists and is in $(0, \I)$,
and one then easily checks that that
$\| \dt_{M, g} \| = M^{-1} (1)^{-1}$ for $g \in G$.

The proof is now essentially the same
as that of the case $p \in [1, \I)$ of Lemma~\ref{L_8Y02_FprGIsRedAlg}.
We need to know that $w_{M, G} (g)$
is an isometry for $g \in G$,
which is immediate,
and that the formula
$\om \bigl( (\xi_g)_{g \in G} \bigr) = M^{-1} (1) \xi_1$
defines a linear functional $\om \colon l^M \to \C$
with $\| \om \| = 1$,
which is easy to check.
\end{proof}

\begin{rmk}\label{R_9915_OrGenLp}
In Definition~\ref{D_9226_OrliczGroupSpace},
one readily checks that for $t \in [1, \I)$
the function $M (t) = t^p$ is a nondegenerate Orlicz function,
and that the norm $\| \cdot \|_M$ is just $\| \cdot \|_p$,
so that $h^M = l^M = l^p$
and $F^M_{\mathrm{r}} (G) = F^p_{\mathrm{r}} (G)$.
\end{rmk}

We will prove that
if $h^M (G)$ is reflexive and $C^*_{\mathrm{r}} (G)$ is simple,
then $F^M_{\mathrm{r}} (G)$ is simple,
by expressing $h^M (G)$ as an interpolation space
between $l^2 (G)$ and some other Orlicz sequence space.
Some work is needed to construct such an Orlicz sequence space.

For nondegenerate Orlicz functions,
the following definition is contained in
Proposition 4.a.5(iii) of~\cite{LndTzf1}.

\begin{dfn}\label{D_9817_EqAtZ}
Let $M, N \in \cF$.
Then {\emph{$M$ is equivalent to $N$ at zero}},
written $M \sim N$,
if there exist $c_0, c_1, t_0 \in (0, \I)$
such that for all $t \in [0, t_0]$ we have
$c_0^{-1} N (c_1^{-1} t) \leq M (t) \leq c_0 N (c_1 t)$.
\end{dfn}

The importance of this condition is explained by the following
result.

\begin{prp}[Proposition 4.a.5 of~\cite{LndTzf1}]\label{P_9817_SameSp}
Let $M$ and $N$ be nondegenerate Orlicz functions.
Then $M \sim N$ \ifo{}
$l^M$ and $l^N$ contain exactly the same sequences
and the identity map from $l^M$ to $l^N$ is a \hme.
\end{prp}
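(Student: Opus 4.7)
The plan is to prove the equivalence in two directions, both via elementary estimates on Orlicz modular sums.

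For the forward direction, assume $M \sim N$ with constants $c_0, c_1, t_0$ as in Definition~\ref{D_9817_EqAtZ}, and aim to show $\|\xi\|_N \leq K \|\xi\|_M$ for a constant $K$ depending only on those parameters and on $M^{-1}(1)$. The ingredients I would assemble are: the standard Luxemburg-type inequality $\sum_g M(|\xi_g|/\|\xi\|_M) \leq 1$, which implies both that $|\xi_g|/\|\xi\|_M \leq M^{-1}(1)$ for every $g$ and that the modular sum is summable; the rewritten hypothesis $N(s) \leq c_0 M(c_1 s)$ for $s \leq t_0/c_1$; and the convexity-based scaling bound $M(t/K) \leq M(t)/K$ valid for $K \geq 1$. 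Choosing $K$ large enough that $|\xi_g|/(K\|\xi\|_M) \leq t_0/c_1$ uniformly and that $c_0 c_1/K \leq 1$, and then chaining the three estimates, yields $\sum_g N(|\xi_g|/(K\|\xi\|_M)) \leq 1$ and hence the desired bound. The symmetric estimate follows by exchanging $M$ and $N$, proving both that $l^M$ and $l^N$ consist of exactly the same sequences and that the identity is a homeomorphism.

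For the converse, assume the identity $l^M \to l^N$ is a homeomorphism; by the open mapping theorem this gives a two-sided bound $C^{-1}\|\xi\|_N \leq \|\xi\|_M \leq C\|\xi\|_N$ for some $C \geq 1$. The key step is to test this on indicator sequences $\xi = \sum_{g \in A} \dt_{M, g}$ with $|A| = n$: a direct computation from Definition~\ref{D_9226_OrliczGroupSpace} gives $\|\xi\|_M = 1/M^{-1}(1/n)$ and likewise for $N$, so the norm equivalence immediately yields $C^{-1} N^{-1}(1/n) \leq M^{-1}(1/n) \leq C N^{-1}(1/n)$ for every $n \in \N$. To upgrade this discrete equivalence of $M^{-1}$ and $N^{-1}$ to an inequality on an interval $(0, u_0)$, for each small $u$ I would pick $n \in \N$ with $1/(n+1) \leq u \leq 1/n$ and apply monotonicity of $M^{-1}$ and $N^{-1}$, using the elementary observations $1/n \leq 2u$ and $u/2 \leq 1/(n+1)$, to conclude $C^{-1} N^{-1}(u/2) \leq M^{-1}(u) \leq C N^{-1}(2u)$. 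Substituting $u = M(s)$ and rearranging then produces the equivalence $M \sim N$ in the precise form of Definition~\ref{D_9817_EqAtZ}.

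The main obstacle is essentially bookkeeping: tracking how the constants $c_0, c_1, t_0$ transform under the substitution between $M, N$ and their inverses, and between the Luxemburg norm and the modular sum, so that the final constants in each direction depend only on the data one is given. No deep Banach space machinery is needed beyond the open mapping theorem, and the only analytic tools are monotonicity, convexity of Orlicz functions, and the indicator-sequence computation. In particular, the converse direction avoids any need for a doubling or $\Delta_2$-type regularity assumption, thanks to the monotone interval argument.
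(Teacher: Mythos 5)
Your proof is correct. Note that the paper itself offers no argument for this statement: it is quoted directly as Proposition 4.a.5 of Lindenstrauss--Tzafriri, so there is no internal proof to compare against; what you have written is essentially the standard proof of that cited result. Both halves check out: in the forward direction, the modular inequality $\sum_g M ( |\xi_g| / \| \xi \|_M ) \leq 1$ (which does require a monotone-convergence step using continuity of~$M$), the resulting uniform bound $|\xi_g| / \| \xi \|_M \leq M^{-1} (1)$, the rewritten hypothesis $N (s) \leq c_0 M (c_1 s)$ for small~$s$, and the convexity bound $M (t / K) \leq M (t) / K$ combine exactly as you describe to give $\| \xi \|_N \leq K \| \xi \|_M$, with the symmetric bound by exchanging $M$ and~$N$. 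In the converse direction, the computation $\| \sum_{g \in A} \dt_{M, g} \|_M = 1 / M^{-1} (1 / n)$ for $|A| = n$ is right, and your monotone interval argument yields $C^{-1} N^{-1} (u / 2) \leq M^{-1} (u) \leq C N^{-1} (2 u)$ for small~$u$, which after substituting $u = M (s)$ is precisely $M \sim N$ with $c_0 = 2$ and $c_1 = C$ (this last step is the content of Lemma~\ref{L_9817_Inv} of the paper, which you are in effect reproving). Two trivial bookkeeping points: the convexity step needs $c_1 / K \leq 1$, not merely $c_0 c_1 / K \leq 1$, which is harmless since the constants in Definition~\ref{D_9817_EqAtZ} may always be enlarged so that $c_0, c_1 \geq 1$; and the indicator-sequence test requires the index set to be infinite, which holds in the intended setting of Orlicz sequence spaces.
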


\begin{prp}\label{P_9825_IntermSpace}
Let $M$ be a nondegenerate Orlicz function
and suppose that $l^M$ is reflexive.
Let $p \in (1, \I)$.
Then there are $\te \in (0, 1)$
and a nondegenerate Orlicz function~$N$
such that $x \mapsto (x^{1/p})^{1 - \te} N^{- 1} (x)^{\te}$
is the inverse function of an Orlicz function
which is equivalent at zero to~$M$
and such that $h^N = l^N$.
\end{prp}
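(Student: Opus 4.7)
My plan is to recast the problem through $\ph = M^{-1}$. Using Lemma~\ref{L_9817_FProp}(\ref{Item_L_9817_FProp_InvOrl}), nondegenerate Orlicz functions correspond bijectively to concave members of $\cF$, so I seek $\te \in (0,1)$ and a concave $\psi \in \cF$ with $\psi = N^{-1}$ for some nondegenerate Orlicz function $N$ satisfying $\Dt_2$ at zero (equivalent to $h^N = l^N$ by Proposition~4.a.4 of~\cite{LndTzf1}), such that $f(x) := x^{(1-\te)/p}\psi(x)^\te \sim \ph(x)$ near zero. The key observation making this formulation viable is that such an $f$ is automatically concave, as the pointwise geometric mean, with weights $1-\te$ and $\te$, of the two concave functions $x \mapsto x^{1/p}$ and $\psi$ (a standard consequence of H\"older's inequality). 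Hence $f \in \cF$ is concave, so by Lemma~\ref{L_9817_FProp}(\ref{Item_L_9817_FProp_InvOrl}) $f = \tilde M^{-1}$ for a nondegenerate Orlicz function $\tilde M$, and an elementary change of variables shows $\tilde M \sim M$ at zero from $\tilde M^{-1} \sim M^{-1}$ at zero.

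To extract structural information, I would use reflexivity of $l^M$: by Theorem~4.a.9 of~\cite{LndTzf1} both $M$ and its complementary Orlicz function satisfy $\Dt_2$ at zero, and Matuszewska--Orlicz index theory then produces indices $1 < r \leq q < \I$ and constants $c, C > 0$ with
\[
c (y/x)^{1/r} \leq \ph(y)/\ph(x) \leq C (y/x)^{1/q}
\]
for all sufficiently small $0 < y \leq x$. I then formally define the candidate $\psi_0(x) = \ph(x)^{1/\te} x^{-(1-\te)/(p\te)}$. Substituting $y = \ld x$ with $\ld \in (0,1]$, the above sandwich translates to
\[
c^{1/\te}\ld^A \leq \psi_0(\ld x)/\psi_0(x) \leq C^{1/\te}\ld^B,
\]
where $A = (1/r - (1-\te)/p)/\te$ and $B = (1/q - (1-\te)/p)/\te$.

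Next, I would choose $\te \in (0,1)$ so that $0 < B \leq A < 1$ strictly. The two binding constraints reduce to $\te > 1 - p/q$ and $\te > (p-r)/(r(p-1))$, and since $q, r > 1$ both right-hand sides are strictly less than $1$, such $\te$ exists for every $p \in (1,\I)$. For this $\te$, the function $\psi_0$ is essentially nondecreasing and $\psi_0(x)/x$ is essentially nonincreasing near zero, with strict two-sided power control. I then let $\psi$ be the least concave majorant of $\psi_0$ on a small interval $[0,a]$ (forced to be equivalent to $\psi_0$ there by the power bounds), and extend $\psi$ to $[0,\I)$ as a concave strictly increasing bijection, e.g., by linear continuation from~$a$. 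This yields $\psi = N^{-1}$ for a nondegenerate Orlicz function $N$, and the strictly positive lower exponent $B > 0$ forces $\psi(Ks) \geq 2\psi(s)$ for some $K > 1$ and all small~$s$, i.e., $N \in \Dt_2$ at zero. Finally, $f(x) = x^{(1-\te)/p}\psi(x)^\te \sim x^{(1-\te)/p}\psi_0(x)^\te = \ph(x)$ near zero by construction.

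I expect the main obstacle to be the last step: producing a genuinely concave $\psi$ equivalent to the explicit but not necessarily concave $\psi_0$, and then extending it to $[0,\I)$ without losing either the equivalence at zero or the $\Dt_2$ condition. The strict inequalities $0 < B \leq A < 1$ from the choice of $\te$ are precisely the slack that makes the least concave majorant construction work on $[0,a]$, and the linear extension inherits $\Dt_2$ at zero for free since this condition is purely local.
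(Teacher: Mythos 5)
Your argument is correct in substance but takes a genuinely different route from the paper's. The paper never writes an explicit formula for the companion function: after reducing, via Proposition~\ref{P_9818_IfRefl} and Lemma~\ref{L_9819_Exp1}, to an equivalent $\ph = M^{-1}$ satisfying two-sided power estimates (the same sandwich you extract from index theory), it builds a concave $\ps$ with $t^{\ep}\ps(t)^{\gm}\sim\ph$ by an inductive piecewise construction (Lemma~\ref{L_9822_Main}), gluing dilates of two model functions $f_0,f_1$ interval by interval in the style of Proposition~4.c.8 of~\cite{LndTzf1}, and verifies concavity by matching one-sided derivatives at the break points. You instead solve algebraically, $\ps_0(x)=\ph(x)^{1/\te}x^{-(1-\te)/(p\te)}$, and repair concavity afterwards with the least concave majorant. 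This works: your choice of $\te$ gives $0<B\leq A<1$, which makes $\ps_0$ almost nondecreasing with $\ps_0(t)/t$ almost nonincreasing near zero, and for such almost quasiconcave functions the least concave majorant is multiplicatively equivalent (a two-point convex-combination estimate using exactly those two monotonicity properties), so $x^{(1-\te)/p}\ps(x)^{\te}\sim\ph(x)$ survives with a constant; the exponent $B>0$ then yields the $\Dt_2$-condition for $N=\ps^{-1}$ and hence $h^N=l^N$, as you say. Your route buys a shorter and more transparent proof that bypasses essentially all of the combinatorics of Section~\ref{Sec_OrliczInt} except the index estimates. What it costs is that the steps you gesture at still carry real content: the passage from reflexivity to the power sandwich is precisely Proposition~\ref{P_9818_IfRefl} plus Lemma~\ref{L_9819_Exp1} (one must first replace $M$ by an equivalent Orlicz function with nontrivial indices, and the sandwich holds only for exponents strictly inside the index interval, after a further shrinking of the neighborhood of zero); the quasiconcave-majorant equivalence must be written out; and the extension of $\ps$ to a strictly increasing bijection of $[0,\I)$ needs a small fix (for instance adding $\dt t$, harmless since $A<1$ forces $\ps_0(t)\gtrsim t$ near zero). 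None of these are gaps, only places where a complete write-up has to do the work the paper does by other means.
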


We postpone the proof to Section~\ref{Sec_OrliczInt}.
We need only one value of~$p$.

The following interpolation result
is a special case of Theorem~1 (in Section~3) of~\cite{RaoM},
with terminology replaced by that of~\cite{LndTzf1}.

\begin{thm}\label{T_9904_RaoInterp}
Let $N_0, N_1 \in \cF$ be Orlicz functions.
For $\te \in (0, 1)$,
let $N_{\te}$ be the function determined by
$N_{\te}^{- 1} (t) = N_0^{- 1} (t)^{1 - \te} N_1^{- 1} (t)^{\te}$
for $t \in [0, \I)$.
Then $N_{\te}$ is an Orlicz function.
Further assume that $h^{N_{j}} = l^{N_{j}}$ for $j \in \{ 0, 1 \}$.
Let $F$ denote the vector space of all functions from $\N$ to~$\C$
with finite support, regarded as a subspace of $l^{N_{\te}}$
for $\te \in [0, 1]$.
Then there are norms $\| \cdot \|_{N_{\te}}^*$
and $\| \cdot \|_{N_{\te}}'$
on the spaces $l^{N_{\te}}$,
each equivalent to $\| \cdot \|_{N_{\te}}$,
such that, whenever
$T \colon F \to F$ is a linear operator
and $C_0, C_1 \in [0, \I)$ are constants such that
$\| T \xi \|_{N_{j}}^* \leq C_{j} \| \xi \|_{N_{j}}^*$
for all $\xi \in F$ and $j \in \{ 0, 1 \}$,
then
$\| T \xi \|_{N_{\te}}'
 \leq C_{0}^{1 - \te} C_{1}^{\te} \| \xi \|_{N_{\te}}^*$
for all $\xi \in F$.
\end{thm}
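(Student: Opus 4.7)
The plan is to split the theorem into two pieces: first, that $N_\te$ is a well-defined nondegenerate Orlicz function, and second, the interpolation inequality itself.

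For the first piece, I would apply Lemma~\ref{L_9817_FProp}(\ref{Item_L_9817_FProp_InvOrl}) and verify that $N_\te^{-1} = (N_0^{-1})^{1-\te}(N_1^{-1})^{\te}$ is a concave element of~$\cF$. Each $N_j^{-1}$ lies in~$\cF$ and is concave, so $N_\te^{-1}$ is continuous, strictly increasing, vanishes at~$0$, and tends to~$\I$ at~$\I$, hence lies in~$\cF$. Concavity of $N_\te^{-1}$ follows by combining coordinatewise monotonicity and joint concavity of the geometric mean $(x, y) \mapsto x^{1 - \te} y^{\te}$ on $(0, \I)^2$ with the individual concavity of the $N_j^{-1}$: for any $\ld \in (0, 1)$ and $s, t \in [0, \I)$, one first uses monotonicity of the geometric mean to promote concavity of each $N_j^{-1}$ into a lower bound on $N_\te^{-1}(\ld s + (1-\ld) t)$, and then uses joint concavity of the geometric mean to split that lower bound into $\ld N_\te^{-1}(s) + (1-\ld) N_\te^{-1}(t)$. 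Then Lemma~\ref{L_9817_FProp}(\ref{Item_L_9817_FProp_InvOrl}) gives that $N_\te$ is a nondegenerate Orlicz function.

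For the interpolation inequality, the plan is to cite Theorem~1 in Section~3 of~\cite{RaoM} directly. That theorem furnishes two equivalent norms on the interpolated Orlicz sequence space, a ``source'' norm $\| \cdot \|_{N_\te}^*$ and a ``target'' norm $\| \cdot \|_{N_\te}'$, both equivalent to $\| \cdot \|_{N_\te}$, such that a Riesz-Thorin type estimate with constants $C_0^{1 - \te} C_1^{\te}$ holds for operators bounded at the endpoints~$j \in \{ 0, 1\}$. The two distinct norms (a Luxemburg-type norm on the source side and an Orlicz-Amemiya-type norm on the target side, in the standard parlance) are an artifact of how Rao's complex-analytic proof lines up the three-lines estimates across the two endpoints. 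The hypothesis $h^{N_j} = l^{N_j}$ supplies the density of finitely supported functions that Rao's argument requires, and the parameterization of~$N_\te$ through $N_\te^{-1} = (N_0^{-1})^{1 - \te} (N_1^{-1})^{\te}$ matches his.

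The main obstacle is purely bibliographic: one must carefully match Rao's notation, parameterization, and hypotheses to those of~\cite{LndTzf1} used throughout this paper. In particular, one must confirm that Rao's construction of~$N_\te$ is indeed the one specified by the geometric mean formula (as opposed to some other interpolation scheme), that the asymmetric source/target norms in his conclusion are precisely the ones we want, and that his density hypothesis is implied by $h^{N_j} = l^{N_j}$. If any step fails to match up cleanly, one could fall back on the classical identification of the Calder\'on product $(l^{N_0})^{1 - \te} (l^{N_1})^{\te}$ with~$l^{N_\te}$ together with standard complex-method interpolation, at the cost of a longer, self-contained argument.
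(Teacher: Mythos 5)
Your proposal is correct and takes essentially the same approach as the paper: the paper likewise obtains the interpolation inequality by citing Theorem~1 (Section~3) of~\cite{RaoM} and devotes its entire proof to the terminology matching you flag as the main obstacle (Young's functions versus nondegenerate Orlicz functions, Rao's norms $N_{\Phi}$ and $\| \cdot \|_{\Phi}$ versus $\| \cdot \|_M$, his space ${\mathfrak{M}}^{\Phi}$ versus $h^M$, and his interpolated function $Q_s^{+}$ versus $N_{\te}$). Your direct geometric-mean concavity argument for the claim that $N_{\te}$ is an Orlicz function is a small self-contained addition; the paper absorbs that claim into the Rao citation but invokes the very same argument (attributed to Calder\'on) elsewhere, in the proof of Proposition~\ref{P_9825_IntermSpace}.
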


The conclusion
uses different norms on $\xi$ and $T \xi$,
even though they are both in~$l^{N_{\te}}$.
There is an implied norm on the algebra $L ( l^{N_{\te}} )$.
Presumably it is not a Banach algebra norm,
although it is equivalent to the norms on $L ( l^{N_{\te}} )$
gotten using any of $\| \cdot \|_{N_{\te}}$,  $\| \cdot \|_{N_{\te}}^*$,
or $\| \cdot \|_{N_{\te}}'$.
There is more on $\| \cdot \|_{N_{\te}}^*$
and $\| \cdot \|_{N_{\te}}'$
in~\cite{RaoM},
but we do not need this information for our purposes.

\begin{proof}[Proof of Theorem \ref{T_9904_RaoInterp}]
We apply the conclusion in (3.1) in Theorem~1 (in Section~3)
of~\cite{RaoM},
with both measure spaces being $\N$ with counting measure,
and with, in the notation there,
$Q_0 = \Ph_0 = N_0$ and $Q_1 = \Ph_1 = N_1$.
We need to match the terminology.
A nondegenerate Orlicz function $M$ in~\cite{LndTzf1}
is the restriction to $[0, \I)$
of the continuous Young's function~$\Ph$,
as at the the beginning of Section~2 of~\cite{RaoM},
given  by $\Ph (t) = M (| t |)$.
The space $L^{\Ph} (\N)$ in~\cite{RaoM} is our $l^M$;
see the beginning of Section~2 of~\cite{RaoM}.
Our $\| \cdot \|_M$ is called $N_{\Ph}$ in~\cite{RaoM}
(see (2.2) there),
and the norm $\| \cdot \|_{\Ph}$ used in~\cite{RaoM}
(see (2.3) there)
is equivalent to ours by (2.4) of~\cite{RaoM}.
Our space $h^M$ is ${\mathfrak{M}}^{\Ph}$ in~\cite{RaoM};
see Definition~1 (in Section~2) of~\cite{RaoM}.
The notation $Q_s^{+}$ in~(3.1) of~\cite{RaoM}
is defined in the remark on page 547 there,
and $\| \cdot \|_{Q_s^{+}}$
is equivalent to our $\| \cdot \|_{N_{\te}}$
by, in~\cite{RaoM}, combining (2.4) with Lemma~4 (in Section~2).
\end{proof}

We use Theorem \ref{T_9904_RaoInterp} in the following form.

\begin{cor}\label{C_9904_InterpForUse}
Let the notation and hypotheses be as in Theorem~\ref{T_9904_RaoInterp}.
Let $M \in \cF$ be an Orlicz function such that $N_{\te} \sim M$
(Definition~\ref{D_9817_EqAtZ}).
Then there is $K \in [0, \I)$
such that whenever
$T \colon F \to F$ is a linear operator
and $C_0, C_1 \in [0, \I)$ are constants such that
$\| T \xi \|_{N_j} \leq C_{j} \| \xi \|_{N_j}$
for all $\xi \in F$ and $j \in \{ 0, 1 \}$,
then
$\| T \xi \|_{M} \leq K C_{0}^{1 - \te} C_{1}^{\te} \| \xi \|_{M}$
for all $\xi \in F$.
\end{cor}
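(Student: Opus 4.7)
The plan is to combine the Lindenstrauss--Tzafriri equivalence of norms from Proposition \ref{P_9817_SameSp} with the interpolation inequality from Theorem \ref{T_9904_RaoInterp}, simply by absorbing all intervening equivalence constants into the single constant~$K$. There is nothing deep happening; the whole argument is a tidy bookkeeping exercise of norm equivalences.

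First I would record the constants. Since $N_{\te} \sim M$, Proposition \ref{P_9817_SameSp} yields $\gm \geq 1$ with $\gm^{-1} \| \xi \|_{N_{\te}} \leq \| \xi \|_M \leq \gm \| \xi \|_{N_{\te}}$ on $l^{N_{\te}} = l^M$. From the equivalences of the auxiliary norms $\| \cdot \|_{N_{j}}^*$ (for $j = 0, 1$) and $\| \cdot \|_{N_{\te}}^*, \| \cdot \|_{N_{\te}}'$ with the corresponding $\| \cdot \|_{N_{\te}}$-norms, as asserted by Theorem~\ref{T_9904_RaoInterp}, I obtain constants $\af_0, \af_1, \bt^*, \bt' \geq 1$ such that, for all relevant $\xi$,
\[
\af_j^{-1} \| \xi \|_{N_{j}}^* \leq \| \xi \|_{N_{j}} \leq \af_j \| \xi \|_{N_{j}}^*,
\qquad
\| \xi \|_{N_{\te}} \leq \bt' \| \xi \|_{N_{\te}}',
\qquad
\| \xi \|_{N_{\te}}^* \leq \bt^* \| \xi \|_{N_{\te}}.
\]

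Now I would take a $T \colon F \to F$ satisfying the hypothesis $\| T \xi \|_{N_{j}} \leq C_j \| \xi \|_{N_{j}}$ for $\xi \in F$ and $j \in \{0, 1\}$, and upgrade it to the starred norms: $\| T \xi \|_{N_{j}}^* \leq \af_j \| T \xi \|_{N_{j}} \leq \af_j C_j \| \xi \|_{N_{j}} \leq \af_j^2 C_j \| \xi \|_{N_{j}}^*$. Feeding this into Theorem~\ref{T_9904_RaoInterp} gives
\[
\| T \xi \|_{N_{\te}}'
\leq (\af_0^2 C_0)^{1 - \te} (\af_1^2 C_1)^{\te} \| \xi \|_{N_{\te}}^*.
\]
Then I would chain the equivalences in the opposite direction: $\| T \xi \|_M \leq \gm \| T \xi \|_{N_{\te}} \leq \gm \bt' \| T \xi \|_{N_{\te}}'$, while $\| \xi \|_{N_{\te}}^* \leq \bt^* \| \xi \|_{N_{\te}} \leq \bt^* \gm \| \xi \|_M$. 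Multiplying everything together yields the desired inequality with
\[
K = \gm^2 \bt' \bt^* \af_0^{2 (1 - \te)} \af_1^{2 \te}.
\]

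The crucial point is that $K$ depends only on the fixed data $M, N_0, N_1, \te$ and not on $T$, $C_0$, or $C_1$, which is clear from the construction. There is no real obstacle in the argument; the only thing to be careful about is that the interpolation inequality in Theorem~\ref{T_9904_RaoInterp} mixes the two auxiliary norms $\| \cdot \|_{N_{\te}}^*$ and $\| \cdot \|_{N_{\te}}'$, so both must be dealt with when passing back to~$\| \cdot \|_M$.
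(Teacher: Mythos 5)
Your proof is correct and follows essentially the same route as the paper: chain the norm equivalences from Proposition~\ref{P_9817_SameSp} and Theorem~\ref{T_9904_RaoInterp} around the interpolation inequality and absorb all constants into~$K$. If anything, you are slightly more careful than the paper's own proof, which applies the interpolation conclusion with the constants $C_0, C_1$ directly without explicitly converting the endpoint hypotheses from $\| \cdot \|_{N_j}$ to $\| \cdot \|_{N_j}^*$ as you do with the factors $\af_j^2$.
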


\begin{proof}
Using the statement and notation of Theorem \ref{T_9904_RaoInterp},
and also applying Proposition~\ref{P_9817_SameSp},
there exist $R, R_0 \in (0, \I)$
such that for all $\xi \in F$ we have
\[
\| \xi \|_{N_{\te}} \leq R_0 \| \xi \|_{N_{\te}}'
\andeqn
R^{-1} \| \xi \|_M \leq \| \xi \|_{N_{\te}}^* \leq R \| \xi \|_M.
\]
For $\xi \in F$ we then have
\[
\| T \xi \|_M
 \leq R \| T \xi \|_{N_{\te}}^*
 \leq R R_0 \| T \xi \|_{N_{\te}}'
 \leq R R_0 C_{0}^{1 - \te} C_{1}^{\te} \| \xi \|_{N_{\te}}^*
 \leq R^2 R_0 C_{0}^{1 - \te} C_{1}^{\te} \| \xi \|_{M},
\]
which is the conclusion with $K = R^2 R_0$.
\end{proof}

\begin{thm}\label{T_9905_OrlAlg}
Let $G$ be a countable group,
taken with the discrete topology,
and let $M$ be a nondegenerate Orlicz function
such that $l^M$ is reflexive.
Let
$\bigl( F^M_{\mathrm{r}} (G), w_{M, G}, \ta_{M, G} \bigr)$
be as in Proposition~\ref{P_9226_OrliczRedGpAlg}
and let $( C^*_{\mathrm{r}} (G), \, w_2, \, \ta_2)$
be as in Example~\ref{Ex_9217_CStar}.
\begin{enumerate}
%
\item\label{Item_T_9905_OrlAlg_Powers}
If $( C^*_{\mathrm{r}} (G), \, w_2, \, \ta_2)$
has the Powers property,
then $\bigl( F^M_{\mathrm{r}} (G), w_{M, G}, \ta_{M, G} \bigr)$
has the Powers property.
\item\label{Item_T_9905_OrlAlg_OneElt}
Let $g \in G$,
and suppose that $( C^*_{\mathrm{r}} (G), \, w_2, \, \ta_2)$
has the $g$-Powers property.
Then $\bigl( F^M_{\mathrm{r}} (G), w_{M, G}, \ta_{M, G} \bigr)$
has the $g$-Powers property.
\end{enumerate}
\end{thm}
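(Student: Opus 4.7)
The strategy is to interpolate between $l^2 (G)$ and a suitable auxiliary reflexive Orlicz space, using the Powers (respectively $g$-Powers) property of $C^*_{\mathrm{r}} (G)$ at the $l^2$-endpoint and the trivial bound $\| w_{M, G} (k) \| = 1$ at the other endpoint. The interpolation delivers a constant $K$ and a power $\dt^{1 - \te}$ with $\te \in (0, 1)$, so by taking $\dt$ small enough any prescribed $\ep > 0$ can be beaten.

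First I would apply Proposition~\ref{P_9825_IntermSpace} with $p = 2$ to produce $\te \in (0, 1)$ and a nondegenerate Orlicz function $N$ with $h^N = l^N$ such that, setting $N_0 (t) = t^2$ and $N_1 = N$, the interpolated Orlicz function $N_{\te}$ from Theorem~\ref{T_9904_RaoInterp} is equivalent at zero to $M$ in the sense of Definition~\ref{D_9817_EqAtZ}. Corollary~\ref{C_9904_InterpForUse} then supplies a constant $K \in [0, \I)$, depending on $M$ and the interpolation data but on nothing else, such that every linear operator $T$ on the finitely supported functions $F$ on $G$ (identified with~$\N$) satisfying $\| T \xi \|_{N_j} \leq C_j \| \xi \|_{N_j}$ on $F$ for $j \in \{ 0, 1 \}$ also satisfies $\| T \xi \|_M \leq K C_0^{1 - \te} C_1^{\te} \| \xi \|_M$ on $F$.

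For part~(\ref{Item_T_9905_OrlAlg_Powers}), given a finite $S \S G \SM \{ 1 \}$ and $\ep > 0$, I would fix $\dt > 0$ with $K \dt^{1 - \te} < \ep$. By the Powers property of $(C^*_{\mathrm{r}} (G), w_2, \ta_2)$ (which by Remark~\ref{R_9224_CStarR} is the case $N_0$), there exist $n \in \N$ and $h_1, \ldots, h_n \in G$ such that, for every $g \in S$, the averaged translation $T_g = \frac{1}{n} \sum_{j = 1}^n w_2 (h_j g h_j^{-1})$ has norm less than $\dt$ on $l^2 (G)$. The same averaged translation, viewed as an operator on $l^N (G)$, has norm at most $1$ by the triangle inequality, since each $w_{N, G} (h_j g h_j^{-1})$ is an isometry of $l^N (G)$. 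Corollary~\ref{C_9904_InterpForUse} applied with $C_0 = \dt$ and $C_1 = 1$ yields $\| T_g \xi \|_M \leq K \dt^{1 - \te} \| \xi \|_M < \ep \| \xi \|_M$ for all $\xi \in F$. Since $F$ is dense in $h^M (G)$ by definition, and $T_g$ on $F$ is the restriction of the bounded operator $\frac{1}{n} \sum_j w_{M, G} (h_j g h_j^{-1})|_{h^M (G)}$, this transfers to the operator norm estimate $\bigl\| \frac{1}{n} \sum_j w_{M, G} (h_j g h_j^{-1}) |_{h^M (G)} \bigr\| < \ep$, verifying the Powers property for $\bigl( F^M_{\mathrm{r}} (G), w_{M, G}, \ta_{M, G} \bigr)$.

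Part~(\ref{Item_T_9905_OrlAlg_OneElt}) is proved identically, invoking the $g$-Powers property of $(C^*_{\mathrm{r}} (G), w_2, \ta_2)$ with $S = \{ g \}$. The real obstacle of the theorem has been isolated in Proposition~\ref{P_9825_IntermSpace}, which constructs the nontrivial auxiliary Orlicz function~$N$; once that input is granted, the argument reduces to a direct application of Rao's interpolation theorem combined with density of~$F$ in~$h^M (G)$. The only technical point requiring care is that Theorem~\ref{T_9904_RaoInterp} is formulated with equivalent but different auxiliary norms on $l^{N_{\te}}$, but Corollary~\ref{C_9904_InterpForUse} has already repackaged these into an inequality in the genuine Orlicz norm $\| \cdot \|_M$, so no additional bookkeeping is needed.
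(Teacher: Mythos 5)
Your proposal is correct and follows essentially the same route as the paper's proof: apply Proposition~\ref{P_9825_IntermSpace} with $p = 2$ to get the auxiliary Orlicz function and $\te$, invoke Corollary~\ref{C_9904_InterpForUse} with $C_0 = \dt$ at the $l^2$ endpoint and $C_1 = 1$ at the $l^N$ endpoint, and extend from the finitely supported sequences by density (using that reflexivity gives $h^M = l^M$). The choice of $\dt$ with $K \dt^{1 - \te} < \ep$ and the treatment of the $g$-Powers case match the paper exactly.
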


Presumably countability of~$G$ is not needed.
Since our purpose is to illustrate what can be done,
we do not investigate the uncountable case here.

\begin{proof}[Proof of Theorem~\ref{T_9905_OrlAlg}]
Recall from Proposition 4.b.2 and Proposition 4.a.4 of~\cite{LndTzf1}
that if $M$ is a nondegenerate Orlicz function
such that $l^M$ is reflexive,
then $h^M = l^M$.
Therefore we work in~$l^M$.

Use any bijection from $G$ to~$\N$
to identify $l^M (G)$ isometrically with~$l^M$.
Apply Proposition~\ref{P_9825_IntermSpace}
with $M$ as given and
with $p = 2$,
getting $\te \in (0, 1)$ and an Orlicz function
which we call~$N_1$.
Set $N_0 (t) = t^2$ for $t \in [0, \I)$.
Then $h^{M_0} = l^{M_0}$ and $h^{M_1} = l^{M_1}$.
With these choices,
in the notation of Corollary~\ref{C_9904_InterpForUse},
we have $M \sim N_{\te}$,
so let $K$ be the constant there.

For~(\ref{Item_T_9905_OrlAlg_Powers}),
let $\ep > 0$ and let $S \S G$ be a finite set.
Choose $\dt > 0$ such that $\dt < ( \ep / K)^{1 / (1 - \te)}$.
Applying Definition~\ref{D_8X29_Powers},
choose $n \in \N$ and $h_1, h_2, \ldots, h_n \in G$
such that for all $g \in S$ we have
\[
\Biggl\| \frac{1}{n} \sum_{j = 1}^n w_2 (h_j g h_j^{-1}) \Biggr\| < \dt.
\]
Apply the estimate in Corollary~\ref{C_9904_InterpForUse}
with $T$ being the restriction of
$\frac{1}{n} \sum_{j = 1}^n w_{M, G} (h_j g h_j^{-1})$
to the subspace $F$ consisting of sequences with finite support,
with $C_0 = \dt$, and with $C_1 = 1$.
Then use density of~$F$ to extend by continuity.
The result is
\[
\Biggl\| \frac{1}{n} \sum_{j = 1}^n w_{M, G} (h_j g h_j^{-1}) \Biggr\|
 \leq K \dt^{1 - \te}
 < \ep.
\]
The proof of~(\ref{Item_T_9905_OrlAlg_Powers})
is complete.

The proof of~(\ref{Item_T_9905_OrlAlg_OneElt})
is the same,
using Definition~\ref{D_8Y02_gPowers}
in place of Definition~\ref{D_8X29_Powers},
and using the same choice of~$\dt$.
\end{proof}

As a corollary, we get the following theorem.

\begin{thm}\label{T_9905_OrlSimplToSimple}
Let $G$ be a countable group,
taken with the discrete topology.
Let $M$ be a nondegenerate Orlicz function
such that $l^M$ is reflexive,
and let $F^M_{\mathrm{r}} (G)$
be as in Proposition~\ref{P_9226_OrliczRedGpAlg}.
Then:
\begin{enumerate}
%
\item\label{Item_T_9905_OrlSimplToSimple}
If $C^*_{\mathrm{r}} (G)$ is simple,
then $F^M_{\mathrm{r}} (G)$ is simple.
\item\label{Item_T_9905_OrlUniqToUniq}
If $C^*_{\mathrm{r}} (G)$ has a unique tracial state,
then $F^M_{\mathrm{r}} (G)$ has a unique unital trace.
\end{enumerate}
\end{thm}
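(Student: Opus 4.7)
The proof is a direct assembly of results already established in the paper, parallel to the proofs of Theorem~\ref{T_8Y07_SimplToSimple} and Theorem~\ref{T_8Y07_UniqToUniq} but now using the Orlicz versions. There is no real obstacle, since the heavy lifting (the interpolation argument) has been packaged into Theorem~\ref{T_9905_OrlAlg}, and the passage from Powers-type properties to the desired conclusions has been packaged into Proposition~\ref{L_6Z16_PPImpSimple} and Corollary~\ref{AllgPowers}.

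For part~(\ref{Item_T_9905_OrlSimplToSimple}), my plan is the following. First, by Proposition~\ref{P_9226_OrliczRedGpAlg}, the triple
$\bigl( F^M_{\mathrm{r}} (G), \, w_{M, G}(\cdot)|_{h^M(G)}, \, \ta_{M, G} \bigr)$
is a reduced group Banach algebra for~$G$. Next, assuming $C^*_{\mathrm{r}}(G)$ is simple, Theorem~\ref{T_8Y03_Simple} (Haagerup) gives that $(C^*_{\mathrm{r}}(G), w_2, \ta_2)$ has the Powers property. Now apply Theorem~\ref{T_9905_OrlAlg}(\ref{Item_T_9905_OrlAlg_Powers}) to transfer the Powers property across to $\bigl( F^M_{\mathrm{r}} (G), \, w_{M, G}, \, \ta_{M, G} \bigr)$. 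Finally, Proposition~\ref{L_6Z16_PPImpSimple} converts the Powers property into simplicity of $F^M_{\mathrm{r}}(G)$.

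For part~(\ref{Item_T_9905_OrlUniqToUniq}), the scheme is entirely analogous but using the single-element versions. Assuming $C^*_{\mathrm{r}}(G)$ has a unique tracial state, Theorem~\ref{T_8Y03_UniqTr} (Haagerup) gives that $(C^*_{\mathrm{r}}(G), w_2, \ta_2)$ has the $g$-Powers property for every $g \in G \setminus \{1\}$. Applying Theorem~\ref{T_9905_OrlAlg}(\ref{Item_T_9905_OrlAlg_OneElt}) one element at a time transfers this property to $\bigl( F^M_{\mathrm{r}} (G), \, w_{M, G}, \, \ta_{M, G} \bigr)$ for every $g \in G \setminus \{1\}$. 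Then Corollary~\ref{AllgPowers} shows that every continuous tracial linear functional on $F^M_{\mathrm{r}}(G)$ is a scalar multiple of $\ta_{M, G}$. Specializing to a unital trace $\sm$ yields $\sm = \sm(1) \cdot \ta_{M, G} = \ta_{M, G}$, giving uniqueness.

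Since everything is bookkeeping at this stage, the proof will be just a few lines citing the relevant results in the order above. The only mild point to be careful about is to record once that $F^M_{\mathrm{r}}(G)$ does qualify as a reduced group Banach algebra (so that Proposition~\ref{L_6Z16_PPImpSimple} and Corollary~\ref{AllgPowers} actually apply), which is precisely the content of Proposition~\ref{P_9226_OrliczRedGpAlg}.
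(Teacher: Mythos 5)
Your proposal is correct and is essentially identical to the paper's own proof: both parts are obtained by combining Theorem~\ref{T_8Y03_Simple} (resp.\ Theorem~\ref{T_8Y03_UniqTr}), Theorem~\ref{T_9905_OrlAlg}, and Proposition~\ref{L_6Z16_PPImpSimple} (resp.\ Corollary~\ref{AllgPowers}), with Proposition~\ref{P_9226_OrliczRedGpAlg} supplying the fact that the triple is a reduced group Banach algebra. Nothing further is needed.
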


\begin{proof}
For~(\ref{Item_T_9905_OrlSimplToSimple}),
combine Theorem~\ref{T_8Y03_Simple},
Theorem \ref{T_9905_OrlAlg}(\ref{Item_T_9905_OrlAlg_Powers}),
and Proposition~\ref{L_6Z16_PPImpSimple}.
For~(\ref{Item_T_9905_OrlUniqToUniq}),
combine Theorem~\ref{T_8Y03_UniqTr},
Theorem \ref{T_9905_OrlAlg}(\ref{Item_T_9905_OrlAlg_OneElt}),
and Corollary~\ref{AllgPowers}.
\end{proof}

\begin{qst}\label{Q_9916_IfNotRefl}
Let $G$ be any countable group,
taken with the discrete topology.
Let $M$ be a nondegenerate Orlicz function
such that $l^M$ is not reflexive.
Does it follow that the algebra $F^M_{\mathrm{r}} (G)$ is not simple?
\end{qst}

As evidence for a positive answer,
it follows from
Proposition 4.b.2 and Proposition 4.a.4 of~\cite{LndTzf1}
that if $l^M$ is not reflexive
then $l^M$ contains a copy of~$l^{\I}$.
We also point out the duality relations
in Proposition 4.b.1 of~\cite{LndTzf1}.

\section{The proof of
 Proposition~\ref{P_9825_IntermSpace}}\label{Sec_OrliczInt}

\begin{lem}\label{L_9817_Inv}
Let $M, N \in \cF$.
Then $M \sim N$ \ifo{} $M^{-1} \sim N^{-1}$.
\end{lem}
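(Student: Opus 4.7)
The plan is to exploit the bijection property of elements of~$\cF$: passing from $M \sim N$ to $M^{-1} \sim N^{-1}$ should amount to a change of variable $s = M(t)$ and inversion of the defining inequalities. Before starting, note that by Lemma~\ref{L_9817_FProp}(\ref{Item_L_9817_FProp_Inv}) the inverses $M^{-1}$ and $N^{-1}$ are themselves in~$\cF$, so the relation $\sim$ is defined for them. Furthermore, since the relation $M \sim N$ is symmetric in $M$ and $N$, and since $(M^{-1})^{-1} = M$, it suffices to prove the forward implication: the reverse implication then follows by applying the forward implication to the pair $M^{-1}, N^{-1}$.

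To carry out the forward direction, I would start from the hypothesis that there exist $c_0, c_1, t_0 \in (0, \I)$ with
\[
c_0^{-1} N(c_1^{-1} t) \leq M(t) \leq c_0 N(c_1 t)
\]
for all $t \in [0, t_0]$. Set $s = M(t)$, so $t = M^{-1}(s)$ and $s$ ranges over $[0, M(t_0)]$. Applying $N^{-1}$ (which is strictly increasing) to the right half of the inequality gives $N^{-1}(s/c_0) \leq c_1 M^{-1}(s)$, i.e.\ $M^{-1}(s) \geq c_1^{-1} N^{-1}(c_0^{-1} s)$; applying it to the left half gives $c_1^{-1} M^{-1}(s) \leq N^{-1}(c_0 s)$, i.e.\ $M^{-1}(s) \leq c_1 N^{-1}(c_0 s)$. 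Taking $s_0 = M(t_0) > 0$ and $d = \max(c_0, c_1)$, monotonicity of $N^{-1}$ yields, for all $s \in [0, s_0]$,
\[
d^{-1} N^{-1}(d^{-1} s) \leq c_1^{-1} N^{-1}(c_0^{-1} s)
   \leq M^{-1}(s) \leq c_1 N^{-1}(c_0 s) \leq d\, N^{-1}(d\, s),
\]
which is the required condition $M^{-1} \sim N^{-1}$ with constants $(d, d, s_0)$.

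There is no serious obstacle here; the argument is essentially a bookkeeping exercise in tracking how the multiplicative constants $c_0, c_1$ are redistributed after applying~$N^{-1}$, and in noting that the interval $[0, t_0]$ for~$t$ becomes the interval $[0, M(t_0)]$ for~$s$. The only thing that needs mild care is verifying that the hypotheses $M, N \in \cF$ are enough to justify the inversion step, which follows at once from Lemma~\ref{L_9817_FProp}.
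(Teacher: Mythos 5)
Your proof is correct and is essentially the paper's argument: both reduce to the forward implication via $(M^{-1})^{-1} = M$ and then obtain the inequalities for the inverses by applying $N^{-1}$ to the defining inequalities after a change of variable (you substitute $s = M(t)$, the paper substitutes $t = c_1^{\pm 1}N^{-1}(x)$, but this is only a bookkeeping difference). Your observation that the constants simply trade places, so that $(c_1, c_0, M(t_0))$ already witnesses $M^{-1} \sim N^{-1}$, makes the final passage to the single constant $d$ unnecessary, though harmless.
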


\begin{proof}
It suffices to prove
that $M \sim N$ implies $M^{-1} \sim N^{-1}$.
So assume $M \sim N$, and let $c_0, c_1, t_0 \in (0, \I)$
be as in Definition~\ref{D_9817_EqAtZ}.
Set $\ph = M^{-1}$ and $\ps = N^{-1}$.
Set $x_0 = \min \bigl( N (c_1 t_0), \, N (c_1^{-1} t_0) \bigr)$.
Then for $x \in [0, x_0]$ put
$t = c_1^{-1} \ps (x)$, observe that $t \in [0, t_0]$,
and apply $\ph$ to both sides of the inequality
$M (t) \leq c_0 N (c_1 t)$,
getting $c_1^{-1} \ps (x) \leq \ph (c_0 x)$,
which is $\ps (x) \leq c_1 \ph (c_0 x)$.
Similarly, put $t = c_1 \ps (x)$, observe that $t \in [0, t_0]$,
and apply $\ph$ to both sides of the inequality
$c_0^{-1} N (c_1^{-1} t) \leq M (t)$,
getting $\ph (c_0^{-1} x) \leq c_1 \ps (x)$,
which is $c_1^{-1} \ph (c_0^{-1} x) \leq \ps (x)$.
\end{proof}

\begin{ntn}\label{D_9817_Exps}
Let $M \in \cF$.
For $t_0 \in (0, 1]$,
set
\[
S_0 (M, t_0)
 = \left\{ r \in (0, \I) \colon
   \sup_{\ld \in (0, 1], \, t \in (0, t_0]}
          \frac{M (\ld t)}{M (\ld) t^r} < \I \right\}
\]
and
\[
S_1 (M, t_0)
 = \left\{ r \in (0, \I) \colon
   \inf_{\ld \in (0, 1], \, t \in (0, t_0]}
          \frac{M (\ld t)}{M (\ld) t^r} > 0 \right\}.
\]
Define
\[
\af_{0} (M, t_0)
 = \sup (S_0 (M, t_0))
\andeqn
\af_{1} (M, t_0)
 = \inf ( S_1 (M, t_0) ),
\]
and define
\[
\af_{0} (M) = \sup_{t_0 \in (0, 1]} \af_{0} (M, t_0)
\andeqn
\af_{1} (M) = \inf_{t_0 \in (0, 1]} \af_{1} (M, t_0).
\]
\end{ntn}

The numbers $\af_{0} (M, 1)$ and $\af_{1} (M, 1)$
are called $\af_M$ and $\bt_M$ in Theorem 4.a.9 of~\cite{LndTzf1}.
We will need the quantities defined here in Lemma~\ref{L_9819_Exp1}.

\begin{rmk}\label{R_9821_Interval}
Let $p, r \in (0, \I)$ satisfy $p < r$.
It is easy to see that,
following Notation~\ref{D_9817_Exps},
$r \in S_0 (M, t_0)$ implies $p \in S_0 (M, t_0)$
and $p \in S_1 (M, t_0)$ implies $r \in S_1 (M, t_0)$.
\end{rmk}

\begin{lem}\label{L_9818_OrderExp}
Adopt Notation~\ref{D_9817_Exps}.
Let $M \in \cF$, and let $t_0, t_1$ satisfy
$0 < t_1 \leq t_0 \leq 1$.
Then
\[
\af_{0} (M, t_0)
 \leq \af_{0} (M, t_1)
 \leq \af_{0} (M)
 \leq \af_{1} (M)
 \leq \af_{1} (M, t_1)
 \leq \af_{1} (M, t_0).
\]
\end{lem}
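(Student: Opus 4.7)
The plan is to reduce the chain of six inequalities to three essentially distinct assertions: the two monotonicity statements $\af_0(M,t_0)\leq \af_0(M,t_1)$ and $\af_1(M,t_1)\leq \af_1(M,t_0)$ on the ends, the two trivial bounds $\af_0(M,t_1)\leq \af_0(M)$ and $\af_1(M)\leq \af_1(M,t_1)$ (which are built into the definitions of $\af_0(M)$ and $\af_1(M)$ as a sup and an inf), and the central inequality $\af_0(M)\leq \af_1(M)$, which is the only one with real content.

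For the monotonicity statements, I would argue purely set-theoretically. If $0<t_1\leq t_0\leq 1$, then the supremum defining membership in $S_0(M,t_0)$ is taken over a larger set of pairs $(\ld,t)$ than the one defining $S_0(M,t_1)$, so finiteness on the larger set forces finiteness on the smaller: $S_0(M,t_0)\subseteq S_0(M,t_1)$, hence $\af_0(M,t_0)\leq \af_0(M,t_1)$. Dually, strict positivity of the infimum over the larger set forces it over the smaller set, giving $S_1(M,t_0)\subseteq S_1(M,t_1)$ and therefore $\af_1(M,t_1)\leq \af_1(M,t_0)$.

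The crux is $\af_0(M)\leq \af_1(M)$, and I would first establish the fixed-$t_0$ version $\af_0(M,t_0)\leq \af_1(M,t_0)$. The key observation is that if $r\in S_0(M,t_0)$ and $s\in S_1(M,t_0)$, then there exist constants $C_1<\I$ and $C_2>0$ with
\[
C_2\,M(\ld)\,t^{s}\;\leq\;M(\ld t)\;\leq\;C_1\,M(\ld)\,t^{r}
\]
for all $\ld\in(0,1]$ and $t\in(0,t_0]$. Dividing yields $C_2/C_1\leq t^{r-s}$ throughout $(0,t_0]$; letting $t\to 0^+$ forces $r\leq s$, for otherwise $t^{r-s}\to 0$ contradicts $C_2/C_1>0$. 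Taking sup over $r$ and inf over $s$ gives the fixed-$t_0$ inequality.

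To upgrade this to $\af_0(M)\leq \af_1(M)$, I would combine it with the monotonicity already proved. For arbitrary $t_0,t_1\in(0,1]$, let $t_2=\min(t_0,t_1)$; then
\[
\af_0(M,t_0)\leq \af_0(M,t_2)\leq \af_1(M,t_2)\leq \af_1(M,t_1).
\]
Taking the supremum over $t_0$ on the left and the infimum over $t_1$ on the right yields $\af_0(M)\leq \af_1(M)$. The main obstacle is really only the central fixed-$t_0$ inequality, and once the ``letting $t\to 0^+$'' trick is in hand, everything else is bookkeeping with sup/inf over larger and smaller index sets.
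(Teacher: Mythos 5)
Your proof is correct and follows essentially the same route as the paper: the outer four inequalities are definitional/monotonicity observations, and the central inequality $\af_0(M,t_0)\leq\af_1(M,t_0)$ is obtained by playing the upper bound $M(\ld t)\leq C_1 M(\ld)t^r$ against the lower bound $M(\ld t)\geq C_2 M(\ld)t^s$ and letting $t\to 0^+$, which is exactly the paper's $c\,t^{p-r}\to\I$ argument in contrapositive form. Your direct element-wise comparison of $S_0$ and $S_1$ even sidesteps the appeal to Remark~\ref{R_9821_Interval} that the paper needs, and your $t_2=\min(t_0,t_1)$ bookkeeping makes explicit what the paper leaves to ``the result follows.''
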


\begin{proof}
We need only prove that
if $t_0 \in (0, 1]$
then $\af_{0} (M, t_0) \leq \af_{1} (M, t_0)$.

Let $r \in (0, \I)$
satisfy $r > \af_{1} (M, t_0)$.
Then there is $p < r$ such that
\[
c = \inf_{\ld \in (0, 1], \, t \in (0, t_0]}
          \frac{M (\ld t)}{M (\ld) t^p}
\]
satisfies $c > 0$.
For $\ld \in (0, 1]$ and $t \in (0, t_0]$,
we have
\[
c t^{p - r} \leq \frac{M (\ld t)}{M (\ld) t^p} \cdot t^{p - r}
  = \frac{M (\ld t)}{M (\ld) t^r}.
\]
Since $p - r < 0$,
we have $\sup_{t \in (0, t_0]} c t^{p - r} = \I$.
So $r \not\in S_{0} (M, t_0)$.
Thus $r \geq \af_{0} (M, t_0)$ by Remark~\ref{R_9821_Interval}.
The result follows.
\end{proof}

\begin{prp}\label{P_9818_IfRefl}
Let $M$ be a nondegenerate Orlicz function,
and suppose that $l^M$ is reflexive.
Then there exists a nondegenerate Orlicz function $N$
such that $N \sim M$
and $1 < \af_{0} (N) \leq \af_{1} (N) < \I$.
\end{prp}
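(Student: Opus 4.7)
Plan:

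The plan is to take $N := M$ and deduce the required index bounds directly from the classical characterization of reflexivity of Orlicz sequence spaces in terms of $\Delta_2$-conditions at zero. By Proposition 4.b.2 (combined with Proposition 4.a.4) of~\cite{LndTzf1}, reflexivity of $l^M$ is equivalent to both $M$ and its Young complementary function $M^*$ satisfying the $\Delta_2$-condition at zero, i.e.\ there exist constants $K_1, K_2 > 0$ and $t_0 \in (0, 1)$ such that $M(2t) \leq K_1 M(t)$ and $M^*(2t) \leq K_2 M^*(t)$ for all $t \in (0, t_0]$.

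Iterating the $\Delta_2$-condition on $M$ yields $M(a t) \leq C_1 a^q M(t)$ for $a \geq 1$ with $at \leq t_0$, where $q = \log_2 K_1$; reparametrizing by $\ld = at$ and $s = 1/a$ this becomes $M(\ld s)/M(\ld) \geq C_1^{-1} s^q$ for $\ld \in (0, t_0]$ and $s \in (0, 1]$. By the standard Young-inequality duality between $M$ and $M^*$ (the ``$\nabla_2$''-characterization used in the proof of Theorem 4.a.9 of~\cite{LndTzf1}), the $\Delta_2$-condition on $M^*$ is equivalent to the existence of $p > 1$ and $C_2 > 0$ with $M(\ld s)/M(\ld) \leq C_2 s^p$ on the same range. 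Next I would extend these bounds from $\ld \in (0, t_0]$ to $\ld \in (0, 1]$ at the cost of restricting $s$ to $(0, s_1]$ for some $s_1 \leq t_0$: for $\ld \in [t_0, 1]$ we have $M(\ld)$ trapped in the bounded range $[M(t_0), M(1)]$, while monotonicity gives $M(t_0 s) \leq M(\ld s) \leq M(s)$; applying the bounds already proved with $\ld = t_0$ to control $M(t_0 s)$ from below and $M(s) = M(t_0 \cdot (s/t_0))$ from above (using $s/t_0 \leq 1$), the constants absorb and we obtain $c_1 s^q \leq M(\ld s)/M(\ld) \leq c_2 s^p$ for all $\ld \in (0, 1]$ and $s \in (0, s_1]$.

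Consequently $p \in S_0(M, s_1)$ and $q \in S_1(M, s_1)$, so $\af_0(M, s_1) \geq p > 1$ and $\af_1(M, s_1) \leq q < \infty$. Lemma~\ref{L_9818_OrderExp} then gives $\af_0(M) \geq \af_0(M, s_1) > 1$ and $\af_1(M) \leq \af_1(M, s_1) < \infty$, so $N := M$ meets all the requirements of the proposition. The main obstacle in executing the plan is the duality step, namely extracting the upper power bound $M(\ld s)/M(\ld) \leq C_2 s^p$ with $p > 1$ from the $\Delta_2$-condition on $M^*$. Although this $\nabla_2/\Delta_2$ correspondence is classical, it requires a careful computation using the Young inequality and the inverse-function relationship $M^{-1}(x) (M^*)^{-1}(x) \asymp x$, so one must be precise about which assumption on $M^*$ translates to which power bound on $M$ (and on which range of the variables).
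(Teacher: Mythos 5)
Your plan is correct in outline and would prove the proposition, but it takes a more self-contained route than the paper, whose entire proof is a citation: the paper combines Propositions 4.c.4 and 4.c.8 of \cite{LndTzf1} with the case $t_0 = 1$ of Lemma~\ref{L_9818_OrderExp}, and in particular uses Proposition 4.c.8 to pass to an equivalent Orlicz function $N$, whereas you argue that $N = M$ itself already works. Your reduction is sound: the $\Dt_2$-condition at zero self-improves to all of $(0,1]$ because $M$ is bounded above and below on $[t_0, 1]$, and your reparametrization plus the extension step in your second paragraph correctly produce exponents $p > 1$ in $S_0(M, s_1)$ and $q < \I$ in $S_1(M, s_1)$, after which Lemma~\ref{L_9818_OrderExp} gives $1 < \af_0(M) \leq \af_1(M) < \I$ exactly as you say. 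What this buys is that you avoid the construction behind Proposition 4.c.8 entirely. The cost is the step you flag yourself: the passage from the $\Dt_2$-condition on $M^*$ to an upper power bound $M(\ld s)/M(\ld) \leq C_2 s^{p}$ with $p > 1$ is precisely the content the paper outsources to \cite{LndTzf1} (it is the classical equivalence of ``$M^* \in \Dt_2$ at zero'' with $\af_0(M,1) > 1$), so as written you have not removed the dependence on that circle of results, only deferred it. To close the argument you should either carry out the Young-duality computation --- the usable form of the hypothesis on $M^*$ is that there is $l > 1$ with $M(t) \leq \frac{1}{2l} M(l t)$ for all small $t$, which iterates to the desired bound with $p = \log_l (2l) > 1$ --- or simply cite the corresponding statement in \cite{LndTzf1}, at which point your proof essentially reproves Proposition 4.c.4 there and then coincides with the paper's.
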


\begin{proof}
Combine Proposition 4.c.8 of~\cite{LndTzf1},
Proposition 4.c.4 of~\cite{LndTzf1}
(see Theorem 4.a.9 of~\cite{LndTzf1} for the notation),
and the case $t_0 = 1$ of Lemma~\ref{L_9818_OrderExp}.
\end{proof}

\begin{lem}\label{L_9819_InvExp}
Let $M \in \cF$ satisfy $M (1) = 1$.
Then, following Notation~\ref{D_9817_Exps},
$\af_{0} (M^{-1}) = \af_{1} (M)^{-1}$
and $\af_{1} (M^{-1}) = \af_{0} (M)^{-1}$.
\end{lem}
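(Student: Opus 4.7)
The plan is to prove the first equality $\af_0(M^{-1}) = \af_1(M)^{-1}$; the second follows by applying the first with $M^{-1}$ (which lies in $\cF$ and satisfies $M^{-1}(1) = 1$) in place of $M$. Setting $\ph = M^{-1}$, the core of the argument is a bijection of $(0,1]^2$ sending $(\mu, s)$ to $(\ld, t) = (\ph(\mu), \ph(\mu s)/\ph(\mu))$, with inverse $(\mu, s) = (M(\ld), M(\ld t)/M(\ld))$; the hypothesis $M(1) = 1$ is what makes both parameter squares coincide. A direct computation then yields, for every $r > 0$,
\[
\frac{\ph(\mu s)}{\ph(\mu) s^r}
 = \biggl[\frac{M(\ld t)}{M(\ld) t^{1/r}}\biggr]^{-r},
\]
so that uniform upper bounds on the left correspond to uniform lower bounds on the right, provided the parameter regions match up.

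To prove $\af_0(\ph) \geq \af_1(M)^{-1}$, I take $r \in S_1(M, t_0)$, so $M(\ld t) \geq c M(\ld) t^r$ for all $\ld \in (0,1]$ and $t \in (0, t_0]$, and I will show that $1/r \in S_0(\ph, s_0)$ for $s_0 := c t_0^r$. Given $\mu \in (0,1]$ and $s \in (0, s_0]$, applying the hypothesis at the pair $(\ph(\mu), t_0)$ yields $M(t_0 \ph(\mu)) \geq c t_0^r \mu \geq \mu s$, hence $\ph(\mu s) \leq t_0 \ph(\mu)$. This places the matching $t = \ph(\mu s)/\ph(\mu)$ in $(0, t_0]$, and the identity then delivers $\ph(\mu s) \leq c^{-1/r} \ph(\mu) s^{1/r}$ as required. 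Letting $r$ range over $\bigcup_{t_0} S_1(M, t_0)$ and passing to the infimum gives $\af_0(\ph) \geq \af_1(M)^{-1}$.

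For the reverse inequality, take $r' \in S_0(\ph, s_0)$, so $\ph(\mu s) \leq C \ph(\mu) s^{r'}$ on $(0,1] \times (0, s_0]$, and fix $\ld, t \in (0,1]$. Writing $s := M(\ld t)/M(\ld)$, the identity handles the case $s \leq s_0$, yielding $M(\ld t) \geq C^{-1/r'} M(\ld) t^{1/r'}$. The residual case $s > s_0$ is the main obstacle, since nothing in the hypothesis forces $t \leq t_0$ for any fixed $t_0$ to accompany $s \leq s_0$; however it is disposed of trivially, because in that case $M(\ld t) > s_0 M(\ld) \geq s_0 M(\ld) t^{1/r'}$ using $t \leq 1$. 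Consequently $1/r' \in S_1(M, 1)$, giving $\af_1(M) \leq 1/r'$; letting $r' \to \af_0(\ph, s_0)^-$ and then $s_0 \to 0$ completes the proof.
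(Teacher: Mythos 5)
Your proof is correct and follows essentially the same route as the paper's: in each direction one converts the bound defining membership in $S_0$ or $S_1$ for one of $M$, $M^{-1}$ into the reciprocal-exponent bound for the other, taking care that the restricted parameter ranges $(0,t_0]$ and $(0,s_0]$ correspond under the substitution $\mu = M(\ld)$, $s = M(\ld t)/M(\ld)$. The only differences are organizational --- the paper proves the second identity directly and, in the harder direction, shrinks the $t$-range to stay inside the hypothesis region, whereas you prove the first identity and cover all of $(0,1]^2$ by disposing of the residual region $s > s_0$ with the trivial bound $M(\ld t) > s_0 M(\ld) \geq s_0 M(\ld) t^{1/r'}$ --- but the substance is identical.
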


\begin{proof}
The first equation follows from the second for $M^{-1}$,
so we only prove the second.

We first claim that $\af_{0} (M) \leq \af_{1} (M^{-1})^{-1}$.
Suppose $\bt \in (0, \I)$ and $\bt < \af_{0} (M)$.
Then there is $t_0 \in (0, 1]$ such that $\bt < \af_{0} (M, t_0)$.
By Remark~\ref{R_9821_Interval},
there is $c \in (0, \I)$
such that for all $\ld \in (0, 1]$ and $t \in (0, t_0]$,
we have
\begin{equation}\label{Eq_9821_Bound}
M (\ld t) \leq c M (\ld) t^{\bt}.
\end{equation}
Define
$x_0 = \min \bigl( 1, \, c t_0^{\bt} \bigr)$.
Let $\et \in (0, 1]$ and let $x \in (0, x_0]$.
Set $\ld = M^{-1} (\et)$
and $t = c^{- 1 / \bt} x^{1 / \bt}$.
Then $\ld \in (0, 1]$ and $t \in (0, t_0]$,
so we can apply (\ref{Eq_9821_Bound}),
getting
\[
M \bigl( M^{-1} (\et)
   \cdot c^{- 1 / \bt} x^{1 / \bt} \bigr)
 \leq c M \bigl( M^{-1} (\et) \bigr)
   \bigl[ c^{- 1 / \bt} x^{1 / \bt} \bigr]^{\bt}
 = \et x.
\]
Since $M^{-1}$ is strictly increasing,
we get
$M^{-1} (\et) \cdot c^{- 1 / \bt} x^{1 / \bt} \leq M^{-1} (\et x)$.
Since $\et \in (0, 1]$ and $x \in (0, x_0]$
are arbitrary,
we conclude that $1 / \bt \in S_1 (M^{-1}, x_0)$.
Therefore $1 / \bt \geq \af_1 (M^{-1})$,
so $\bt \leq \af_1 (M^{-1})^{-1}$.
Since $\bt < \af_{0} (M)$ is arbitrary,
the claim follows.

For the reverse inequality,
let $\bt \in (0, \I)$ satisfy $\bt < \af_{1} (M^{-1})^{-1}$.
Then $1 / \bt > \af_{1} (M^{-1})$,
so there is $x_0 \in (0, 1]$
such that $1 / \bt \in S_{1} (M^{-1}, x_0)$,
that is,
there is a constant $d > 0$
such that $M^{-1} (\et x) \geq d M^{-1} (\et) x^{1 / \bt}$
for all $\et \in (0, 1]$ and all $x \in (0, x_0]$.
Set $t_0 = \min \bigl( 1, \, d x_0^{1 / \bt} \bigr)$.
Let $\ld \in (0, 1]$ and let $t \in (0, t_0]$.
Then $M (\ld) \in (0, 1]$ and $d^{- \bt} t^{\bt} \in (0, x_0]$,
so
\[
M^{-1} \bigl(  M (\ld) d^{- \bt} t^{\bt} \bigr)
 \geq d M^{-1} (M (\ld)) \bigl( d^{- \bt} t^{\bt} \bigr)^{1 / \bt}
 = \ld t,
\]
whence
$M (\ld t) \leq d^{- \bt} M (\ld) t^{\bt}$.
This computation shows that
$\bt \leq \af_{0} (M, t_0) \leq \af_{0} (M)$.
\end{proof}

\begin{lem}\label{L_9819_Exp1}
Let $M \in \cF$ and let $\bt_0, \bt_1 \in (0, \I)$
satisfy $\bt_0 < \af_{0} (M)$ and $\af_{1} (M) < \bt_1$.
Then there are $N \in \cF$ and $t_0 \in (0, 1]$
such that $N \sim M$, $N (1) = 1$, and
\[
N (\ld) t^{\bt_1} \leq N (\ld t) \leq N (\ld) t^{\bt_0}
\]
for all $\ld \in (0, 1]$ and all $t \in (0, t_0]$.
If $M$ is convex then $N$ can be chosen to be convex,
and if $M$ is concave then $N$ can be chosen to be concave.
\end{lem}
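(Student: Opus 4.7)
The plan is to take $N$ simply as the rescaling $N(x) = M(x)/M(1)$; the content of the lemma is to upgrade the defining inequalities of $\af_0(M)$ and $\af_1(M)$, which carry unwanted multiplicative constants, into clean pointwise bounds by restricting to a sufficiently small interval for~$t$. No genuine construction of $N$ is required beyond normalization.

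First, I would extract the standard consequences of the index conditions. By \Lem{L_9818_OrderExp}, $\af_0(M) = \sup_{t_0 \in (0,1]} \af_0(M, t_0)$, so from $\bt_0 < \af_0(M)$ one can pick $t_0^{(1)} \in (0,1]$ and $\bt \in \bigl( \bt_0, \, \af_0(M, t_0^{(1)}) \bigr)$. By Remark~\ref{R_9821_Interval} this $\bt$ lies in $S_0(M, t_0^{(1)})$, producing a constant $C > 0$ with
\[
M(\ld t) \leq C M(\ld) t^{\bt}
\qquad \text{for all } \ld \in (0, 1], \ t \in (0, t_0^{(1)}].
\]
Dually, using the formula $\af_1(M) = \inf_{t_0 \in (0,1]} \af_1(M, t_0)$, from $\af_1(M) < \bt_1$ one obtains $t_0^{(2)} \in (0,1]$, an exponent $\gm \in \bigl( \af_1(M, t_0^{(2)}), \, \bt_1 \bigr)$ with $\gm \in S_1(M, t_0^{(2)})$, and a constant $d > 0$ with
\[
M(\ld t) \geq d M(\ld) t^{\gm}
\qquad \text{for all } \ld \in (0, 1], \ t \in (0, t_0^{(2)}].
\]

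Second, I would set
\[
t_0 = \min \bigl( 1, \, t_0^{(1)}, \, t_0^{(2)}, \, C^{- 1 / (\bt - \bt_0)}, \, d^{1 / (\bt_1 - \gm)} \bigr).
\]
Because $\bt - \bt_0 > 0$ and $\bt_1 - \gm > 0$, this choice ensures $C t^{\bt - \bt_0} \leq 1$ and $d t^{\gm - \bt_1} \geq 1$ for all $t \in (0, t_0]$. Rewriting the two inequalities above as
\[
M(\ld t) \leq C t^{\bt - \bt_0} \cdot M(\ld) t^{\bt_0}
\qquad \text{and} \qquad
M(\ld t) \geq d t^{\gm - \bt_1} \cdot M(\ld) t^{\bt_1}
\]
then yields the clean sandwich
\[
M(\ld) t^{\bt_1} \leq M(\ld t) \leq M(\ld) t^{\bt_0}
\qquad \text{for } \ld \in (0, 1], \ t \in (0, t_0].
\]

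Finally, I would define $N(x) = M(x)/M(1)$. Then $N \in \cF$ (rescaling by a positive constant preserves continuity, strict monotonicity, and being a bijection of $[0,\I)$), $N(1) = 1$, $N$ inherits convexity or concavity from~$M$, and $N \sim M$ is trivial from Definition~\ref{D_9817_EqAtZ} since the two functions differ only by a positive scalar factor. Dividing the sandwich inequality above through by $M(1)$ transfers it verbatim to~$N$. There is no substantive obstacle in this argument: the only point is to observe that the multiplicative constants $C$ and $d$ attached to the index bounds can be absorbed into the exponent gaps $\bt - \bt_0$ and $\bt_1 - \gm$ by further shrinking $t_0$, after which no further manipulation of $M$ is needed.
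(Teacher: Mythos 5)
Your proof is correct and follows essentially the same route as the paper: both arguments extract the two-sided power bounds with constants $c_0, c_1$ from the definitions of $\af_0(M)$ and $\af_1(M)$ (via \Lem{L_9818_OrderExp} and Remark~\ref{R_9821_Interval}), absorb those constants into the exponent gaps $\bt - \bt_0$ and $\bt_1 - \gm$ by further shrinking $t_0$, and then normalize to get $N(1) = 1$. The only difference is that the paper additionally precomposes with $t \mapsto t_1 t$ before normalizing the value; your version shows this extra rescaling of the argument is not needed, since dividing by $M(1)$ alone already yields all the stated properties.
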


\begin{proof}
Choose $\gm_0, \gm_1 \in (0, \I)$
such that
\[
\bt_0 < \gm_0 < \af_{0} (M)
\andeqn
\af_{1} (M) < \gm_1 < \bt_1.
\]
Then there are constants $c_0, c_1 \in (0, \I)$
and $t_0 \in (0, 1]$ such that
\[
c_1 M (\ld) t^{\gm_1} \leq M (\ld t) \leq c_0 M (\ld) t^{\gm_0}
\]
for all $\ld \in (0, 1]$ and all $t \in (0, t_0]$.
Choose $t_1 \in (0, 1]$ such that whenever $t \in (0, t_1]$
we have
$t^{\bt_0 - \gm_0} > c_0$ and $t^{\bt_1 - \gm_1} < c_1$.
For $t \in [0, \I)$ define $N_0 (t) = M (t_1 t)$.
Now suppose that $\ld \in (0, 1]$ and $t \in (0, t_0]$.
Using $\ld t_1 \leq 1$ at the second step
and $c_0 t^{\gm_0} \leq t^{\bt_0}$ at the third step,
we get
\[
N_0 (\ld t)
 = M (\ld t_1 t)
 \leq c_0 M (\ld t_1) t^{\gm_0}
 \leq M (\ld t_1) t^{\bt_0}
 = N_0 (\ld) t^{\bt_0}.
\]
Similarly,
using $c_1 t^{\gm_1} \geq t^{\bt_1}$ at the third step,
\[
N_0 (\ld t)
 = M (\ld t_1 t)
 \geq c_1 M (\ld t_1) t^{\gm_1}
 \geq M (\ld t_1) t^{\bt_1}
 = N_0 (\ld) t^{\bt_1}.
\]
The proof is now completed by setting
$N (t) = N_0 (1)^{-1} N_0 (t)$ for $t \in [0, \I)$.
It is obvious that if $M$ is convex then so is~$N$,
and similarly for concavity.
\end{proof}

\begin{lem}\label{L_9825_ExistF0}
Let $t_0, s_0, s_1 \in (0, 1)$,
and suppose that $s_0 \leq s_1$.
Then there exist a strictly increasing
concave $C^{\infty}$~function
$f \colon (0, 1] \to (0, 1]$
and $\ta \in (0, 1)$ such that:
\begin{enumerate}
%
\item\label{Item_9825_ExistF1_f1}
$f (1) = 1$.
\item\label{Item_9825_ExistF1_fprime1}
$f' (1) = s_1$.
\item\label{Item_9825_ExistF1_tauSmall}
$0 < \ta < t_0$.
\item\label{Item_9825_ExistF1_Ftau}
$f (\ta) = \ta^{s_0}$.
\item\label{Item_9825_ExistF1_fprtau}
$f' (\ta) \leq s_1 \ta^{s_1 - 1}$.
\end{enumerate}
\end{lem}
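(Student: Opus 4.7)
The plan is to construct $f$ by smoothing a piecewise affine concave increasing function on $[\theta,1]$ and extending affinely to $(0,\theta]$. Fix $\theta\in(0,t_0)$ to be chosen small below, set $\alpha=\tfrac{1}{2}s_1\theta^{s_1-1}$, and observe that $\alpha<s_1\theta^{s_1-1}$ while $\alpha>s_1$ once $\theta^{s_1-1}>2$. Let $t_2$ be the unique solution of
\[
\alpha(t_2-\theta)+s_1(1-t_2)=1-\theta^{s_0};
\]
a direct estimate shows $t_2\sim 2(1-s_1)\theta^{1-s_1}/s_1$ as $\theta\to 0$, so $\theta<t_2<1$ for all sufficiently small $\theta$. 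Define a piecewise affine function $f_0$ on a slightly enlarged interval $[\theta-\epsilon,1+\epsilon]$ by taking slope $\alpha$ through $(\theta,\theta^{s_0})$ on $[\theta-\epsilon,t_2]$ and slope $s_1$ through $(1,1)$ on $[t_2,1+\epsilon]$; since the slopes decrease across $t_2$, $f_0$ is concave and strictly increasing.

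Now choose $\epsilon>0$ small enough that $t_2-\epsilon>\theta$, $t_2+\epsilon<1$, and $\alpha\epsilon<\theta^{s_0}$, and convolve $f_0$ with a symmetric nonnegative $C^\infty$ mollifier $\phi_\epsilon$ of unit mass supported in $[-\epsilon,\epsilon]$. The convolution $\tilde f=f_0*\phi_\epsilon$ is $C^\infty$; concavity and strict monotonicity are preserved because $\phi_\epsilon\geq 0$; and since a symmetric mollifier reproduces affine functions exactly, $\tilde f=f_0$ on $[\theta,t_2-\epsilon]\cup[t_2+\epsilon,1]$. This immediately yields $\tilde f(1)=1$, $\tilde f'(1)=s_1$, $\tilde f(\theta)=\theta^{s_0}$, and $\tilde f'(\theta)=\alpha\leq s_1\theta^{s_1-1}$, with all higher derivatives at $\theta$ and $1$ vanishing.

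Finally, extend $\tilde f$ to $(0,\theta]$ by the same affine formula $f(t)=\theta^{s_0}+\alpha(t-\theta)$. Because $\tilde f$ already equals this function on a right neighborhood of $\theta$, all one-sided derivatives match, so the extended $f$ is $C^\infty$, concave, and strictly increasing on $(0,1]$. Positivity on $(0,\theta]$ reduces to $\theta^{s_0-s_1}>s_1/2$, which holds for small $\theta$ when $s_0<s_1$ and is automatic from $s_1<1$ when $s_0=s_1$. The main obstacle is merely bookkeeping — assembling the finitely many smallness conditions on $\theta$, and then on $\epsilon$, so that every inequality used holds simultaneously; the substantive facts (concavity is preserved under convolution with a symmetric nonnegative kernel, and a symmetric mollifier reproduces affine functions exactly) are standard.
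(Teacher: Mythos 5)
Your construction is correct, and it is genuinely different from the one in the paper. The paper works with the explicit one\mbox{-}parameter family $g_{\omega}(t) = 1 + \tfrac{s_1}{\omega}(t^{\omega}-1)$ for $\omega \in (s_1,1]$, which automatically satisfies (1), (2), (5) and concavity, and then locates $\tau$ as the unique zero of $h_{\omega}(t)=g_{\omega}(t)-t^{s_0}$ in $(0,1)$; the only delicate point there is forcing $\tau_{\omega}<t_0$, which is done by letting $\omega \to s_1^{+}$ so that $g_{\omega}$ converges uniformly to $t^{s_1}$ on $[t_0,1]$. You invert the logic: you fix $\tau=\theta<t_0$ in advance, prescribe the two affine pieces through $(\theta,\theta^{s_0})$ with slope $\alpha=\tfrac12 s_1\theta^{s_1-1}$ and through $(1,1)$ with slope $s_1$, check that for small $\theta$ the breakpoint $t_2$ lands in $(\theta,1)$ and the slopes decrease across it, and then mollify. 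Your estimates are right: $t_2/\theta \to \infty$ and $t_2 \to 0$ as $\theta \to 0^{+}$, $\alpha>s_1$ once $\theta^{s_1-1}>2$, $\alpha<s_1\theta^{s_1-1}$ gives (5), the symmetric mollifier reproduces the affine pieces on $[\theta,t_2-\epsilon]$ and $[t_2+\epsilon,1]$ so (1), (2), (4), (5) hold exactly, and positivity on $(0,\theta]$ follows since $\theta^{s_0-s_1}\ge 1 > s_1/2$ (indeed automatic for all $s_0\le s_1$, not just for small $\theta$). What your route buys is direct control of $\tau$ and of every boundary condition by construction, at the cost of the mollification bookkeeping; what the paper's route buys is a closed\mbox{-}form $C^{\infty}$ formula with no gluing, at the cost of an intermediate\mbox{-}value and uniform\mbox{-}convergence argument to control where the crossing point falls.
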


\begin{proof}
If $s_0 = s_1$, take $f (t) = t^{s_1}$ for $t \in [0, 1]$
and take $\ta = t_0 / 2$.
Otherwise, for $\om \in  (s_1, 1]$
define $g_{\om}, h_{\om} \colon [0, 1] \to \R$ by
\[
g_{\om} (t) = 1 + \frac{s_1}{\om} (t^{\om} - 1 )
\andeqn
h_{\om} (t) = g_{\om} (t) - t^{s_0}.
\]
Then $g_{\om}$ satisfies (\ref{Item_9825_ExistF1_f1})
and~(\ref{Item_9825_ExistF1_fprime1}).
Clearly $g_{\om}$ is $C^{\infty}$~on $(0, 1]$.
For $t \in (0, 1]$ we have
$g_{\om}' (t) = s_1 t^{\om - 1} \leq s_1 t^{s_1 - 1}$,
which means that $g_{\om}$ satisfies~(\ref{Item_9825_ExistF1_fprtau})
for any $\ta \in (0, 1]$.
It is obvious that $g_{\om}$ is strictly increasing,
and $g_{\om}$ is concave by the second derivative test.

One checks that $h_{\om}' (t) = 0$
for exactly one value of $t$ in $(0, 1]$, namely
\[
c_{\om} = \exp \left( - \frac{1}{\om - s_0}
       \log \left( \frac{s_1}{s_0} \right) \right).
\]
Moreover, $h_{\om} (1) = 0$, $h_{\om}' (1) = s_1 - s_0 > 0$,
and $h_{\om} (0) = 1 - \frac{s_1}{\om} > 0$.
Therefore $h_{\om}' (t) < 0$ for $t \in (0, c_{\om})$
and $h_{\om}' (t) > 0$ for $t \in (c_{\om}, 1]$,
so there is exactly one number $\ta_{\om} \in (0, 1)$
such that $h_{\om} (\ta_{\om}) = 0$.
Moreover, $0 < \ta_{\om} < c_{\om}$.
One checks that if $s_1 < \om < \zt \leq 1$
then $c_{\om} < c_{\zt} < 1$.

If $c_1 \leq t_0$,
then we must have $\ta_1 \leq t_0$,
and we can take $f = g_1$ and $\ta = \ta_1$.
Otherwise,
set $\dt = \inf_{t \in [t_0, c_1]} | t^{s_0} - t^{s_1} |$.
Then $\dt > 0$.
The functions $g_{\om}$ converge uniformly to $t^{s_1}$
on $[t_0, 1]$ as $\om \to (s_1)^{+}$,
as is seen from the inequality
\[
| g_{\om} - t^{s_1} |
 \leq \left| 1 - \frac{s_1}{\om} \right|
    + \left| 1 - \frac{s_1}{\om} \right| t^{\om}
    + \bigl| \exp \bigl( \om \log (t) \bigr)
          -\exp  \bigl( s_1 \log (t) \bigr) \bigr|.
\]
Therefore there is $\om \in (s_1, 1]$
such that
$| g_{\om} - t^{s_1} | < \frac{\dt}{2}$
for all $t \in [t_0, 1]$.
Then $g_{\om} (t) \neq t^{s_0}$
for $t \in [t_0, c_1]$.
Since $\ta_{\om} < c_{\om} \leq c_1$
and $g_{\om} ( \ta_{\om} ) = \ta_{\om}^{s_0}$,
it follows that $\ta_{\om} < t_0$.
We complete the proof by setting
$f = g_{\om}$ and $\ta = \ta_{\om}$.
Condition~(\ref{Item_9825_ExistF1_tauSmall})
holds by construction,
and (\ref{Item_9825_ExistF1_Ftau}) is $h_{\om} (\ta_{\om}) = 0$.
\end{proof}

\begin{lem}\label{L_9822_Main}
Let $\ph \in \cF$,
and suppose that $\ph$ is concave and
$0 < \af_{0} (\ph) \leq \af_{1} (\ph) < 1$.
Let $\ep > 0$ and $\gm \in (0, \I)$ satisfy
\[
\ep < \af_{0} (\ph)
\andeqn
0 < \frac{\af_{0} (\ph) - \ep}{\gm}
  \leq \frac{\af_{1} (\ph) - \ep}{\gm}
  < 1.
\]
Then there exists a concave function $\ps \in \cF$
such that the function
$t \mapsto t^{\ep} \ps (t)^{\gm}$
is equivalent to~$\ph$ at zero
and such that $\ps (1) = 1$ and $0 < \af_0 (\ps) \leq \af_1 (\ps) < 1$.
\end{lem}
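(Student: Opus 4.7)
The plan is to replace $\ph$ by an equivalent Orlicz function $N$ with sharp multiplicative control (via Lemma~\ref{L_9819_Exp1}), write down an explicit candidate $\ps_0 = (N/t^\ep)^{1/\gm}$, verify the multiplicative sandwich bounds for $\ps_0$, and then upgrade $\ps_0$ to a genuinely concave member of $\cF$ by a piecewise construction based on Lemma~\ref{L_9825_ExistF0}.

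By the hypotheses, first choose $\bt_0, \bt_1 \in (0, \I)$ with $\ep < \bt_0 < \af_0(\ph) \leq \af_1(\ph) < \bt_1$ and $(\bt_1 - \ep)/\gm < 1$. Apply Lemma~\ref{L_9819_Exp1} to $\ph$ (which is concave) to obtain a concave $N \in \cF$ and $t_0 \in (0, 1]$ with $N \sim \ph$, $N(1) = 1$, and $N(\ld) t^{\bt_1} \leq N(\ld t) \leq N(\ld) t^{\bt_0}$ for all $\ld \in (0, 1]$ and $t \in (0, t_0]$. Define $\ps_0(t) = (N(t) t^{-\ep})^{1/\gm}$ for $t \in (0, 1]$, with $\ps_0(0) = 0$. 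Dividing the bounds on $N$ by $t^\ep$ and taking $\gm$th roots transfers them to $\ps_0(\ld) t^{s_1} \leq \ps_0(\ld t) \leq \ps_0(\ld) t^{s_0}$, where $s_j = (\bt_j - \ep)/\gm \in (0, 1)$. By construction $t^\ep \ps_0(t)^\gm = N(t) \sim \ph(t)$ at zero.

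The remaining task is to replace $\ps_0$ by a concave $\ps \in \cF$ equivalent to $\ps_0$ at zero, with $\ps(1) = 1$; this is the main obstacle, since $\ps_0$ need not itself be concave. The idea is to build $\ps$ piecewise on a geometric grid $t_n = \te^n$ for some $\te \in (0, t_0)$ small: on each interval $[t_{n+1}, t_n]$ use a rescaled copy of the bridging concave function produced by Lemma~\ref{L_9825_ExistF0} to connect values of $\ps$ at successive grid points. The slope properties $f'(1) = s_1$ and $f'(\ta) \leq s_1 \ta^{s_1 - 1}$ in Lemma~\ref{L_9825_ExistF0} are exactly what is required to arrange that at each junction $t_n$ the left derivative of $\ps$ dominates the right derivative (giving global concavity); the sandwich bounds on $\ps_0$ keep the constructed values within a uniformly bounded multiplicative factor of $\ps_0$, hence $\ps \sim \ps_0$ at zero.

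Finally, extend $\ps$ above $\te$ by an affine concave piece so that $\ps \in \cF$ and $\ps(1) = 1$; the exponent bounds $0 < s_0 \leq \af_0(\ps) \leq \af_1(\ps) \leq s_1 < 1$ are transferred from the piecewise-power structure via the sandwich. The hardest point in executing this plan is the bookkeeping in the piecewise construction: one must verify that the concatenation of the rescaled bridges both retains the multiplicative sandwich controlling equivalence to $\ps_0$ and yields a strictly increasing concave function on a full neighborhood of $0$.
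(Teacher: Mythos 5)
Your overall strategy is the same as the paper's: replace $\ph$ by an equivalent function with clean power bounds (Lemma~\ref{L_9819_Exp1}), build $\ps$ piecewise on a geometric grid out of the two bridges $f_0$ and $f_1 (t) = t^{s_1}$, use the derivative conditions of Lemma~\ref{L_9825_ExistF0} to get concavity at the junctions, and read off the exponent bounds from the piecewise-power structure. However, there is a genuine gap at the central step, which you describe as ``connect values of $\ps$ at successive grid points'' and dismiss as bookkeeping. The bridges available from Lemma~\ref{L_9825_ExistF0} have \emph{fixed} endpoint ratios: after multiplicative rescaling, $f_0$ drops the value by exactly the factor $\ta^{s_0}$ over one grid interval and $f_1$ by exactly $\ta^{s_1}$ (and $\ta$ itself is an output of the lemma, not a free choice). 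So you cannot interpolate the prescribed values $\ps_0 (\ta^n) = \bigl( N (\ta^n) \ta^{- n \ep} \bigr)^{1/\gm}$, whose successive ratios lie anywhere in $[\ta^{s_1}, \ta^{s_0}]$. The constructed function necessarily takes values of the form $\ta^{s_0 k + s_1 (n - k)}$ at $\ta^n$, and the per-step discrepancy from $\ps_0$ (up to a factor $\ta^{s_0 - s_1}$ each step) compounds multiplicatively, so with no correction mechanism the ratio to $\ps_0$ can drift to $0$ or $\I$ and equivalence at zero fails.

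What is missing is an adaptive selection rule: at each $n$ one must decide between $f_0$ and $f_1$ by comparing the candidate value $(\ta^n)^{\ep} \ps_0 (\ta^n)^{\gm} \ta^{\ep + \gm s_0}$ with $\ph_0 (\ta^{n+1})$, and then prove \emph{by induction} a two-sided estimate of the form
\[
(\ta^n)^{\ep} \ps_0 (\ta^n)^{\gm}
 \leq \ph_0 (\ta^{n})
 \leq \ta^{\gm (s_0 - s_1)} (\ta^n)^{\ep} \ps_0 (\ta^n)^{\gm},
\]
using the power bounds on $\ph_0$ to show that whichever branch is forced, the error stays within the fixed factor $\ta^{\gm (s_0 - s_1)}$ rather than accumulating. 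This decision rule and induction (the $\{0,1\}$-valued sequence $\et$, modelled on Proposition 4.c.8 of Lindenstrauss--Tzafriri) are the heart of the proof, not an implementation detail. The rest of your outline --- the junction concavity via $f' (1) = s_1$ and $f' (\ta) \leq s_1 \ta^{s_1 - 1}$, the affine extension above $1$, and the transfer of the indices $s_0 \leq \af_0 (\ps) \leq \af_1 (\ps) \leq s_1$ --- is correct and matches the paper.
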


The proof is a modification of the proof of
Proposition 4.c.8 of~\cite{LndTzf1}.
In particular,
the choices made give a sequence
$\et \in \{ 0, 1 \}^{\N}$ as in~\cite{LndTzf1}.
Unfortunately,
the relationship between
the inverses of the functions
$\ps$ and $t \mapsto t^{\ep} \ps (t)^{\gm}$
doesn't seem to be simple enough to use
Proposition 4.c.8 of~\cite{LndTzf1} directly.

\begin{proof}[Proof of Lemma~\ref{L_9822_Main}]
Choose $q_0, q_1 \in (0, 1)$
such that $q_0 < \af_{0} (\ph)$, $\af_{1} (\ph) < q_1$,
and the numbers
\begin{equation}\label{Eq_9822_sDfn}
s_0 = \frac{q_{0} - \ep}{\gm}
\andeqn
s_1 = \frac{q_{1} - \ep}{\gm}
\end{equation}
satisfy $0 < s_0 < s_1 < 1$.
Use Lemma~\ref{L_9819_Exp1}
to choose $t_0 \in (0, 1]$
and a concave function $\ph_0 \in \cF$
such that $\ph_0 \sim \ph$, $\ph_0 (1) =  1$, and
\begin{equation}\label{Eq_9822_Power}
\ph_0 (\ld) t^{q_1} \leq \ph_0 (\ld t) \leq \ph_0 (\ld) t^{q_0}
\end{equation}
whenever $\ld \in (0, 1]$ and $t \in (0, t_0]$.

Define $f_1 \colon (0, 1] \to (0, 1]$
by $f_1 (t) = t^{s_1}$ for $t \in (0, 1]$.
Apply Lemma~\ref{L_9825_ExistF0},
getting $\ta \in (0, t_0)$
and a function from $(0, 1]$ to $(0, 1]$,
which we call~$f_0$.

We now define a function $\ps_0 \colon (0, 1] \to (0, \I)$
and a sequence $\et \in \{ 0, 1 \}^{\N}$
by an inductive procedure,
for $\ps_0$ using pieces which look like $f_0$ or like~$f_1$.
Set $\ps_0 (1) = 1$.
Suppose $n \in \Nz$ and that $\ps_0 (\ta^n)$ has been defined.
We define $\et (n)$ and $\ps_0 (t)$
for $t \in [ \ta^{n + 1}, \, \ta^n )$
as follows.
If
\begin{equation}\label{Eq_9822_ifless}
(\ta^n)^{\ep} \ps_0 (\ta^n)^{\gm} \ta^{ \ep + \gm s_0}
  \leq \ph_0 (\ta^{n + 1}),
\end{equation}
set
\begin{equation}\label{Eq_9822_less}
\et (n) = 0
\andeqn
\ps_0 (t)
 = \ps_0 (\ta^n) f_0 \bigl( \ta^{- n} t \bigr).
\end{equation}
If
\begin{equation}\label{Eq_9822_ifbigger}
(\ta^n)^{\ep} \ps_0 (\ta^n)^{\gm} \ta^{ \ep + \gm s_0}
  > \ph_0 (\ta^{n + 1}),
\end{equation}
set
\begin{equation}\label{Eq_9822_bigger}
\et (n) = 1
\andeqn
\ps_0 (t)
 = \ps_0 (\ta^n) f_1 \bigl( \ta^{- n} t \bigr).
\end{equation}

It is clear that $\ps_0$ is \ct{} and strictly increasing.

We claim that for all $n \in \Nz$ we have
\begin{equation}\label{Eq_9822_DiscEst}
(\ta^n)^{\ep} \ps_0 (\ta^n)^{\gm}
 \leq \ph_0 (\ta^{n})
 \leq \ta^{\gm (s_0 - s_1)} (\ta^n)^{\ep} \ps_0 (\ta^n)^{\gm}.
\end{equation}
(Note that $\ta^{\gm (s_0 - s_1)} > 1$
since $\ta < 1$ and $s_0 - s_1 < 0$.)
The proof is by induction on~$n$.
The relation~(\ref{Eq_9822_DiscEst})
certainly holds for $n = 0$:
it says $1 \leq 1 \leq \ta^{\gm (s_0 - s_1)}$.

So suppose that~(\ref{Eq_9822_DiscEst})
holds for some $n \in \Nz$;
we prove it for $n + 1$.
Assume first that (\ref{Eq_9822_ifless}) holds.
Then $\ps_0 (\ta^{n + 1}) = \ps_0 (\ta^n) \ta^{s_0}$.
Therefore,
using (\ref{Eq_9822_ifless}) at the second step,
$\ta^n \leq 1$, $\ta < t_0$, and (\ref{Eq_9822_Power})
at the third step,
the induction hypothesis at the fourth step,
and (\ref{Eq_9822_sDfn}) at the sixth step,
\begin{align*}
(\ta^{n + 1})^{\ep} \ps_0 (\ta^{n + 1})^{\gm}
& = (\ta^n)^{\ep} \ps_0 (\ta^n)^{\gm} \ta^{\ep} \ta^{\gm s_0}
\\
& \leq \ph_0 (\ta^{n + 1})
  \leq \ph_0 (\ta^{n}) \ta^{q_0}
  \leq \ta^{\gm (s_0 - s_1)} (\ta^n)^{\ep} \ps_0 (\ta^n)^{\gm}
    \ta^{q_0}
\\
& = \ta^{\gm (s_0 - s_1)}
     (\ta^{n + 1})^{\ep} \ps_0 (\ta^{n + 1})^{\gm}
      \ta^{- \ep} \ta^{- \gm s_0} \ta^{q_0}
\\
& = \ta^{\gm (s_0 - s_1)}
    (\ta^{n + 1})^{\ep} \ps_0 (\ta^{n + 1})^{\gm}.
\end{align*}
This is the desired conclusion.

Now assume instead that (\ref{Eq_9822_ifbigger}) holds.
Then $\ps_0 (\ta^{n + 1}) = \ps_0 (\ta^n) \ta^{s_1}$.
Therefore,
using the induction hypothesis at the second step,
$\ta^n \leq 1$, $\ta < t_0$, and (\ref{Eq_9822_Power})
at the third step,
(\ref{Eq_9822_sDfn}) at the fourth step,
and (\ref{Eq_9822_ifbigger}) at the fifth step,
\begin{align*}
(\ta^{n + 1})^{\ep} \ps_0 (\ta^{n + 1})^{\gm}
& = (\ta^n)^{\ep} \ps_0 (\ta^n)^{\gm} \ta^{\ep} \ta^{\gm s_1}
\\
& \leq \ph_0 (\ta^{n}) \ta^{\ep + \gm s_1}
  \leq \ph_0 (\ta^{n + 1}) \ta^{- q_1} \ta^{\ep + \gm s_1}
\\
& = \ph_0 (\ta^{n + 1})
  < (\ta^n)^{\ep} \ps_0 (\ta^n)^{\gm} \ta^{ \ep + \gm s_0}
\\
& = (\ta^{n + 1})^{\ep} \ps_0 (\ta^{n + 1})^{\gm}
    \ta^{- \ep} \ta^{- \gm s_1}
    \ta^{ \ep + \gm s_0}
\\
& = \ta^{\gm (s_0 - s_1)}
   (\ta^n)^{\ep} \ps_0 (\ta^n)^{\gm}.
\end{align*}
This completes the induction.

We next claim that for all $t \in (0, 1]$ we have
\begin{equation}\label{Eq_9822_ContEst}
\ta^{q_1} t^{\ep} \ps_0 (t)^{\gm}
 \leq \ph_0 (t)
 \leq \ta^{\gm (s_0 - s_1)}
   \ta^{- \ep - \gm s_1} t^{\ep} \ps_0 (t)^{\gm}.
\end{equation}
To prove the claim,
choose $n \in \Nz$ such that
$\ta^{n + 1} \leq t \leq \ta^{n}$.
Then $\ps_0 (\ta^{n + 1})$ is either $\ps_0 (\ta^n) \ta^{s_0}$
or $\ps_0 (\ta^n) \ta^{s_1}$,
and in either case
\begin{equation}\label{Eq_9822_s1s0}
\ps_0 (\ta^n) \ta^{s_1}
 \leq \ps_0 (\ta^{n + 1})
 \leq \ps_0 (\ta^n) \ta^{s_0}.
\end{equation}
Since the function $t \mapsto t^{\ep} \ps_0 (t)^{\gm}$
is also strictly increasing,
we have,
using (\ref{Eq_9822_DiscEst}) at the second and sixth steps,
$\ta^n \leq 1$, $\ta < t_0$, and (\ref{Eq_9822_Power})
at the third step,
and (\ref{Eq_9822_s1s0}) at the seventh step,
\begin{align*}
\ta^{q_1} t^{\ep} \ps_0 (t)^{\gm}
& \leq \ta^{q_1} (\ta^{n})^{\ep} \ps_0 (\ta^{n})^{\gm}
  \leq \ta^{q_1} \ph_0 (\ta^{n})
  \leq \ph_0 (\ta^{n + 1})
\\
& \leq \ph_0 (t)
  \leq \ph_0 (\ta^{n})
  \leq \ta^{\gm (s_0 - s_1)} (\ta^n)^{\ep} \ps_0 (\ta^n)^{\gm}
\\
& \leq \ta^{\gm (s_0 - s_1)} \ta^{- \ep} (\ta^{n + 1})^{\ep}
   \ta^{- \gm s_1} \ps_0 (\ta^{n + 1})^{\gm}
\\
& \leq \ta^{\gm (s_0 - s_1) - \ep - \gm s_1}
    t^{\ep} \ps_0 (t)^{\gm}.
\end{align*}
The claim follows.

Define $\ps \colon [0, \I) \to [0, \I)$
by
\[
\ps (t)
 = \begin{cases}
   0         & \hspace*{1em} t = 0
        \\
   \ps_0 (t) & \hspace*{1em} 0 < t \leq 1
       \\
   1 + s_1 (t - 1)
       & \hspace*{1em} 1 < t.
\end{cases}
\]
It follows from (\ref{Eq_9822_s1s0})
and $\ta < 1$ that $\limi{n} \ps (\ta^n) = 0$.
Clearly $\ps (1) = 1$.
Since $\ps_0$ is \ct{} and strictly increasing,
one now easily checks that $\ps$ is \ct{} and strictly increasing.
Also $\ps$ is surjective.
So $\ps \in \cF$.
The previous claim implies that $\ph$ is equivalent at zero
to the function $t \mapsto t^{\ep} \ps (t)^{\gm}$.

We claim that $\ps$ is concave.
One can proceed via the concave analogs of Lemma 4.b.11
and the discussion before Proposition 4.c.4
of~\cite{LndTzf1},
but it is easier to give a direct proof.
The second derivative test applied to $f_1$,
or the choice of $f_0$ using Lemma~\ref{L_9825_ExistF0},
as appropriate,
proves concavity of
the restriction of $\ps$ to each of the intervals
$[ \ta^{n + 1}, \, \ta^n ]$ for $n \in \Nz$
and $[1, \I)$.
Denoting by $D_{-} g (t)$ and $D_{+} g (t)$
the left and right hand derivatives at $t$ of a function~$g$,
it remains only to prove that
for $n \in \Nz$ we have
\[
D_{-} f ( \ta^n) \geq D_{+} f ( \ta^n).
\]

First consider the case $n = 0$.
Here,
regardless of whether $\et (0)$ is $0$ or~$1$,
we have
$D_{-} f (1) = f_{\et (1)}' (1) = s_1 = D_{+} f (1)$.
Now suppose $n \in \N$.
Regardless of the value of $\et (n)$,
we have
%
\begin{align*}
D_{-} f ( \ta^n)
& = \ps_0 (\ta^n) \ta^{- n} f_{\et (n)}' (1)
\\
& = \ps_0 (\ta^n) \ta^{- n} s_1
  =  \ps_0 (\ta^{n - 1}) \ta^{- n + s_{\et (n)}} s_1
  \geq \ps_0 (\ta^{n - 1}) \ta^{- n + s_1} s_1.
\end{align*}
Also,
if $\et (n - 1) = 1$
then $f_{\et (n - 1)}' (\ta) = s_1 \ta^{s_1 - 1}$
by direct computation,
and if $\et (n - 1) = 0$
then $f_{\et (n - 1)}' (\ta) \leq s_1 \ta^{s_1 - 1}$
by Lemma \ref{L_9825_ExistF0}(\ref{Item_9825_ExistF1_fprtau}).
Thus
\[
D_{+} f ( \ta^n)
  = \ps_0 (\ta^{n - 1}) \ta^{- (n - 1)} f_{\et (n - 1)}' (\ta)
  \leq \ps_0 (\ta^{n - 1}) \ta^{- n + s_1} s_1
  \leq D_{-} f ( \ta^n).
\]
This completes the proof of the claim.

We now claim that
$s_0 \leq \af_0 (\ph) \leq \af_1 (\ph) \leq s_1$.
This will imply $0 < \af_0 (\ph) \leq \af_1 (\ph) < 1$,
and finish the proof.

First, by construction,
for every $n \in \Nz$
we have $\ps (\ta^{n + 1}) = \ta^{s_0} \ps (\ta^n)$
or $\ps (\ta^{n + 1}) = \ta^{s_1} \ps (\ta^n)$.
Therefore, for $m, n \in \Nz$,
\begin{equation}\label{Eq_9920_TauEst}
\ta^{s_1 (m - n)}
 \leq \frac{\ps (\ta^m)}{\ps (\ta^n)}
 \leq \ta^{s_0 (m - n)}.
\end{equation}
Now let $t, \ld \in (0, 1]$.
Choose $m, n \in \Nz$ such that
\[
\ta^{m + 1} < \ld t \leq \ta^{m}
\andeqn
\ta^{n + 1} < \ld \leq \ta^{n}.
\]
Then $\ta^{m - n + 1} < t < \ta^{m - n - 1}$,
so, by~(\ref{Eq_9920_TauEst}),
\[
\ta^{2 s_1} t^{s_1}
 \leq \ta^{s_1 (m - n + 1)}
 \leq \frac{\ps (\ta^{m + 1})}{\ps (\ta^n)}
 \leq \frac{\ps (\ld t)}{\ps (\ld)}
 \leq \frac{\ps (\ta^{m})}{\ps (\ta^{n + 1})}
 \leq \ta^{s_0 (m - n - 1)}
 \leq \ta^{- 2 s_0} t^{s_0}.
\]
This shows that $s_0 \in S_0 (\ph, 1)$ and $s_1 \in S_1 (\ph, 1)$,
and the claim follows by Lemma~\ref{L_9818_OrderExp}.
\end{proof}

\begin{proof}[Proof of Proposition~\ref{P_9825_IntermSpace}]
By Proposition~\ref{P_9818_IfRefl}
and Proposition~\ref{P_9817_SameSp},
we may assume $1 < \af_{0} (M) \leq \af_{1} (M) < \I$.
Since $M \sim M (1)^{-1} M$,
we may further assume $M (1) = 1$.
Set $\ph = M^{-1}$.
Then $0 < \af_{0} (\ph) \leq \af_{1} (\ph) < 1$
by Lemma~\ref{L_9819_InvExp}.
Set
\[
\ep = \frac{1}{p} \min \bigl( \af_{0} (\ph),
            \, 1 - \af_{1} (\ph)  \bigr)
\andeqn
\gm = 1 - p \ep.
\]
Then
\[
\frac{\af_{0} (\ph) - \ep}{\gm} > 0
\andeqn
\frac{\af_{1} (\ph) - \ep}{\gm}
 \leq \frac{\af_{1} (\ph) - \ep}{\af_{1} (\ph)}
 < 1.
\]
Therefore we may apply Lemma~\ref{L_9822_Main}
with these choices of $\ep$ and~$\gm$,
getting a concave function $\ps \in \cF$.
Set $N = \ps^{-1}$.
Set $\ps_1 = \ps$,
and define $\ps_0 \colon [0, \I) \to [0, \I)$
by $\ps_0 (t) = t^{ 1/p}$ for $t \in [0, \I)$.
For any $\te \in (0, 1)$,
the function $\ps_{\te}$,
given by $\ps_{\te} (t) = \ps_0 (t)^{1 - \te} \ps_1 (t)^{\te}$
for $t \in [0, \I)$,
is clearly in~$\cF$,
and is concave
by an argument on page 165 of~\cite{Cldn}.
So by Lemma \ref{L_9817_FProp}(\ref{Item_L_9817_FProp_InvOrl}),
the function $\ps_{\te}$
is the inverse of an Orlicz function~$N_{\te}$,
here clearly nondegenerate.
Taking $\te = \gm$,
we have $\ps_{\te} (t) = t^{\ep} \ps_1 (t)^{\gm}$.
Thus $\ps_{\te} \sim \ph$.
So Lemma~\ref{L_9817_Inv} implies $N_{\te} \sim \ph^{- 1} = M$,
as desired.

It remains to prove that $h^N = l^N$.
The construction in Lemma~\ref{L_9822_Main}
gives $\ps (1) = 1$ and $0 < \af_0 (\ps) \leq \af_1 (\ps) < 1$.
So $1 < \af_0 (N) \leq \af_1 (N) < \I$
by Lemma~\ref{L_9819_InvExp}.
Choose any $r \in (\af_1 (N), \, \I)$.
Then there are $t_0 \in (0, 1]$ and $C \in (0, \I)$
such that $N (\ld t) \geq C N (\ld) t^r$
for all $\ld \in (0, 1]$ and $t \in (0, t_0]$.
Choose $n \in \N$ such that $2^{- n} < t_0$.
Then
\[
\sup_{\ld \in (0, \, 2^{-n}]} \frac{N (2 \ld)}{N (\ld)}
 \leq \sup_{\ld \in (0, \, 2^{-n}]} \frac{N (2^n \ld)}{N (\ld)}
 \leq C^{-1} 2^{n r}.
\]
This implies that $N$ satisfies the $\Dt_2$-condition at zero
(Definition 4.a.3 of~\cite{LndTzf1}),
so $h^N = l^N$ by Proposition 4.a.4 of~\cite{LndTzf1}.
\end{proof}

\section{Lorentz spaces}\label{Sec_Other}

In this section,
we consider group algebras on Lorentz sequence spaces
as in Section~4.e of~\cite{LndTzf1}.
These spaces are
mostly quite different from Orlicz sequence spaces,
by Theorems 4.e.2 and 4.e.2$'$ of~\cite{LndTzf1}
and the comment afterwards.
When $G$ is countable and $1 < r < p < \I$,
the Lorentz space $l^{p, r} (G)$
(following Section~1.4 of~\cite{Grfs})
is a Lorentz sequence space.
For such $G$, $p$, and~$r$,
we show that simplicity of $C^*_{\mathrm{r}} (G)$
implies simplicity of the analogous algebra defined on $l^{p, r} (G)$,
and similarly for the unique trace property.
There are many more Lorentz sequence spaces,
and for many of these one expects analogous results,
but treating them seems to require a more general interpolation theorem
and possibly a construction
like that in Section~\ref{Sec_OrliczInt}.

The spaces in the following definition are exactly those
of Definition 4.e.1 of~\cite{LndTzf1}.
The difference is in the notation:
as for Orlicz sequence spaces,
we index sequences using $G$ instead of $\N$.
In~\cite{LndTzf1},
the space is called $d (\bt, p)$.

\begin{dfn}\label{D_9226_LorentzGroupSpace}
Let $\bt = (\bt_n)_{n \in \N}$ be a nonincreasing sequence
in $(0, \I)$
such that
\[
\bt_1 = 1,
\qquad
\limi{n} \bt_n = 0,
\andeqn
\sum_{n = 1}^{\I} \bt_n = \I.
\]
Let $p \in [1, \I)$.
Let $G$ be an infinite countable group.
For any family $\xi = (\xi_g)_{g \in G}$ of complex numbers,
define
\[
\| \xi \|_{\bt, p}
 = \sup \Biggl( \Biggl\{
    \Biggl( \sum_{n = 1}^{\I} | \xi_{\sm (n)} |^p \bt_n \Biggr)^{1/p}
     \colon {\mbox{$\sm$ is a bijection $\N \to G$}}
              \Biggr\} \Biggr).
\]
Then define $d^{\bt, p} (G)$,
the {\emph{Lorentz sequence space of~$G$}}
(with parameters $p$ and~$\bt$),
to be the Banach space consisting
of all $\xi$ such that $\| \xi \|_{\bt, p} < \I$,
with the norm $\| \cdot \|_{\bt, p}$.

In $d^{\bt, p} (G)$, for $g \in G$
we let $\dt_{\bt, p, g}$ be the function
$\dt_{\bt, p, g} (g) = 1$
and $\dt_{\bt, p, g} (h) = 0$ for $h \in G \SM \{ g \}$.
We define the {\emph{left regular representation}}
of $G$ on $d^{\bt, p} (G)$
to be the function $w_{\bt, p} \colon G \to L ( d^{\bt, p} (G) )$
given by, for $\xi = (\xi_g)_{g \in G} \in d^{\bt, p} (G)$
and $h \in G$,
\[
\bigl( w_{\bt, p} (g) \xi \bigr)_h = \xi_{g^{-1} h}.
\]
Let $F^{\bt, p}_{\mathrm{r}} (G) \S L ( d^{\bt, p} (G) )$
be the closed linear span
\[
F^{\bt, p}_{\mathrm{r}} (G) = {\overline{\spn}}
   \bigl( \bigl\{ w_{\bt, p} (g) \colon g \in G \bigr\} \bigr).
\]
Define $\ta_{\bt, p} \colon F^{\bt, p}_{\mathrm{r}} (G) \to \C$
as follows:
if $a (\dt_{\bt, p, 1}) = (\xi_g)_{g \in G}$,
then $\ta_{\bt, p} (a) = \xi_1$.
\end{dfn}

\begin{prp}\label{P_9226_LorentzIfRedGpAlg}
Adopt the notation and assumptions
of Definition~\ref{D_9226_LorentzGroupSpace}.
Then
$\bigl( F^{\bt, p}_{\mathrm{r}} (G),
 \, w_{\bt, p}, \, \ta_{\bt, p} \bigr)$
is a reduced group Banach algebra for~$G$.
\end{prp}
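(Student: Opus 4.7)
The plan is to follow the template set by \Lem{L_8Y02_FprGIsRedAlg} and \Prp{P_9226_OrliczRedGpAlg}, verifying each clause of \Def{N_8X29_AbstractRedG} in turn. The key structural observation is that the norm $\|\cdot\|_{\bt,p}$ in \Def{D_9226_LorentzGroupSpace} depends on the family $\xi = (\xi_g)_{g\in G}$ only through the multiset of values $\{|\xi_g|\}$ (since it is defined as a supremum over all bijections $\sm \colon \N \to G$), so any permutation of the index set $G$ induces an isometry on $d^{\bt,p}(G)$. In particular, for each $g\in G$, left translation $\xi \mapsto w_{\bt,p}(g)\xi$ is an invertible isometry, with inverse $w_{\bt,p}(g^{-1})$; thus $w_{\bt,p}$ is a group \hm{} into $\Isom(d^{\bt,p}(G))$, and $F^{\bt,p}_{\mathrm{r}}(G)$ is a closed unital subalgebra of $L(d^{\bt,p}(G))$, hence a unital Banach algebra.

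Next I would check that $\ta_{\bt,p}$ is a unital trace. Define $\om\colon d^{\bt,p}(G)\to\C$ by $\om(\xi) = \xi_1$; the inequality $|\xi_1|^p \leq \|\xi\|_{\bt,p}^p$ (taking any bijection $\sm$ with $\sm(1)=1$, using $\bt_1 = 1$) shows $\|\om\|\leq 1$, so $\ta_{\bt,p}(a) = \om(a\dt_{\bt,p,1})$ is continuous, and $\ta_{\bt,p}(1) = 1$. For the trace property, a direct computation shows
\[
\ta_{\bt,p}\bigl(w_{\bt,p}(g)w_{\bt,p}(h)\bigr)
 = \begin{cases} 1 & gh = 1 \\ 0 & \text{otherwise}\end{cases}
\]
which is symmetric in $g,h$; the trace identity on all of $F^{\bt,p}_{\mathrm{r}}(G)$ then follows by bilinearity and continuity.

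Conditions \ref{N_8X29_AbstractRedG}(\ref{Item_8X29_AbstractRedG_Density}) and \ref{N_8X29_AbstractRedG}(\ref{Item_8X29_AbstractRedG_Inj}) are immediate from the definitions. For \ref{N_8X29_AbstractRedG}(\ref{Item_8X29_AbstractRedG_TrFaithful}), I would copy the argument in the proof of \Lem{L_8Y02_FprGIsRedAlg}: if $\ta_{\bt,p}(aw_{\bt,p}(g))=0$ for all $g\in G$, then for each $h\in G$ and each $g\in G$, writing $a\dt_{\bt,p,h} = (\et_k)_{k\in G}$, the identity
\[
\et_g = \om\bigl(w_{\bt,p}(g^{-1})aw_{\bt,p}(h)\dt_{\bt,p,1}\bigr)
     = \ta_{\bt,p}\bigl(aw_{\bt,p}(hg^{-1})\bigr) = 0
\]
(using the trace property established above) forces $a\dt_{\bt,p,h}=0$.

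The only nonroutine ingredient, and the main obstacle, is to conclude from this that $a = 0$. This requires knowing that $\spn\{\dt_{\bt,p,h}:h\in G\}$ is norm dense in $d^{\bt,p}(G)$. I would invoke Proposition 4.e.3 of~\cite{LndTzf1}, which states that the unit vectors form a (symmetric) Schauder basis of $d(\bt,p)$, so finitely supported sequences are dense. Reindexing by the countable group $G$ preserves this, and density of the span of the $\dt_{\bt,p,h}$ together with continuity of $a$ yields $a = 0$, completing the verification.
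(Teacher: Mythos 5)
Your proof is correct and takes essentially the same route as the paper, which simply notes that the argument of Lemma~\ref{L_8Y02_FprGIsRedAlg} carries over once one knows that each $w_{\bt, p} (g)$ is an isometry and that $\om (\xi) = \xi_1$ defines a norm-one functional on $d^{\bt, p} (G)$. The one ingredient you make explicit that the paper leaves implicit is the density of the finitely supported sequences in $d^{\bt, p} (G)$, and Proposition 4.e.3 of~\cite{LndTzf1} is the right reference for it.
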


\begin{proof}
The proof is essentially the same
as that of the case $p \in [1, \I)$ of Lemma~\ref{L_8Y02_FprGIsRedAlg}.
We need to know that $w_{\bt, p} (g)$
is an isometry for $g \in G$,
which is immediate,
and that the formula
$\om \bigl( (\xi_g)_{g \in G} \bigr) = \xi_1$
defines a linear functional $\om \colon d^{\bt, p} (G) \to \C$
with $\| \om \| = 1$,
which is easy.
\end{proof}

We won't address general Lorentz sequence spaces here;
instead,
we only consider the following particularly important special case,
which can be treated with a standard interpolation theorem.
Let $(X, \mu)$ be a measure space
and let $\xi \colon X \to \C$ be measurable.
The distribution function
(Definition 1.1.1 of~\cite{Grfs})
$d_{\xi} \colon [0, \I) \to [0, \I]$
is given by
$d_{\xi} (\af)
 = \mu \bigl( \bigl\{ x \in X \colon | \xi (x) | > \af \bigr\} \bigr)$,
and the decreasing rearrangement of $\xi$
(Definition 1.4.1 of~\cite{Grfs})
is the function $\xi^* \colon [0, \I) \to [0, \I]$ given by
$\xi^* (\ld)
 = \inf \bigl(
   \bigl\{ \af > 0 \colon d_{\xi} (\af) \leq \ld \bigr\} \bigr)$.
Then for $p, r \in (0, \I)$,
the space $L^{p, r} (X, \mu)$
(Definition 1.4.6 of~\cite{Grfs})
is the set of all measurable
$\xi \colon X \to \C$ such that the quantity
\[
\| \xi \|_{p, r}
 = \left( \int_0^{\I} \bigl( \ld^{1 / p} \xi^* (\ld) \bigr)^r
     \ld^{-1} \, d \ld \right)^{1 / r}
\]
is finite,
as usual mod equality of functions almost everywhere.
We warn that $\xi \mapsto \| \xi \|_{p, r}$ is usually
not a norm, only a quasinorm,
even when $p, r > 1$.
This space is called the {\emph{Lorentz space}}
with indices $p$ and~$r$.
See Section 1.4.2 of~\cite{Grfs}
for how these spaces relate to the usual
spaces $L^p (X, \mu)$ and to each other,
and for the definition of $L^{p, r} (X, \mu)$ when $p = \I$ or $r = \I$.
We mention just a few facts.
We have $L^{p, p} (X, \mu) = L^p (X, \mu)$,
the space $L^{p, \I} (X, \mu)$ is what is usually called
``weak $L^p$'',
and for fixed $p$ the spaces $L^{p, r} (X, \mu)$ increase
with~$r$.

When $\mu$ is counting measure,
we abbreviate $L^{p, r} (X, \mu)$ to $l^{p, r} (X)$.
The following fact is surely well known,
although we have not found it stated anywhere.

\begin{prp}\label{P_9906_LoretzCtgMs}
Let $p \in (1, \I)$, let $r \in (1, p)$,
and define a sequence $\bt = (\bt_n)_{n \in \N}$
in $(0, \I)$ by
$\bt_n = n^{r / p} - (n - 1)^{r / p}$ for $n \in \N$.
Then $\bt$ satisfies
the condition of Definition~\ref{D_9226_LorentzGroupSpace},
and for any countable group~$G$,
we have $d^{\bt, r} (G) = l^{p, r} (G)$
(that is, they contain exactly the same sequences),
with
\begin{equation}\label{Eq_P_9906_LoretzCtgMs}
\| \xi \|_{p, r} = (p / r)^{1 / r} \| \xi \|_{\bt, r}
\end{equation}
for all $\xi \in d^{\bt, p} (G)$.
\end{prp}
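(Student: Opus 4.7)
The plan is to verify the conditions on $\bt$ directly from elementary properties of the power function $t \mapsto t^{r/p}$, and then to reduce both norms to the same sum by choosing a bijection $\sm$ that produces a nonincreasing rearrangement of $(|\xi_g|)_{g \in G}$.

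First I would check the conditions of Definition~\ref{D_9226_LorentzGroupSpace}. The equality $\bt_1 = 1$ is immediate. Since $r/p \in (0, 1)$, the function $t \mapsto t^{r/p}$ is strictly concave on $[0, \I)$, so its discrete first difference $\bt_n$ is strictly decreasing in~$n$ and positive. The mean value theorem gives $\bt_n = (r/p) c_n^{r/p - 1}$ for some $c_n \in (n - 1, n)$, so $\bt_n \to 0$. Finally, the partial sums telescope: $\sum_{n = 1}^{N} \bt_n = N^{r/p} \to \I$.

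Next I would identify the sup in the definition of $\| \xi \|_{\bt, r}$. Given $\xi = (\xi_g)_{g \in G} \in d^{\bt, r} (G)$, let $\sm_0 \colon \N \to G$ be any bijection such that $n \mapsto |\xi_{\sm_0 (n)}|$ is nonincreasing (the set $\{ g \colon |\xi_g| > \af \}$ is finite for each $\af > 0$ by an easy $l^\infty$ argument once finiteness of $\| \xi \|_{\bt, r}$ is noted, so such $\sm_0$ exists; for general $\xi$ one can otherwise argue that the sup is attained only on sequences with this finiteness property). A standard summation-by-parts/rearrangement argument, using that $\bt$ is nonincreasing, shows that for any bijection $\sm \colon \N \to G$ one has
\[
\sum_{n = 1}^{\I} |\xi_{\sm (n)}|^r \bt_n
 \leq \sum_{n = 1}^{\I} |\xi_{\sm_0 (n)}|^r \bt_n,
\]
so that $\| \xi \|_{\bt, r}^r = \sum_{n = 1}^{\I} |\xi_{\sm_0 (n)}|^r \bt_n$.

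Then I would compute $\| \xi \|_{p, r}$ for counting measure. The distribution function $d_\xi$ is integer-valued and piecewise constant, and its decreasing rearrangement is the step function $\xi^*(\ld) = |\xi_{\sm_0 (n)}|$ for $\ld \in [n - 1, n)$, $n \in \N$. Therefore
\[
\| \xi \|_{p, r}^r
 = \int_0^{\I} \xi^*(\ld)^r \ld^{r/p - 1} \, d\ld
 = \sum_{n = 1}^{\I} |\xi_{\sm_0 (n)}|^r
       \int_{n - 1}^{n} \ld^{r/p - 1} \, d\ld
 = \frac{p}{r}
       \sum_{n = 1}^{\I} |\xi_{\sm_0 (n)}|^r \bt_n,
\]
where the last step uses $\int_{n - 1}^{n} \ld^{r/p - 1} \, d\ld = (p/r) \bigl( n^{r/p} - (n - 1)^{r/p} \bigr) = (p/r) \bt_n$. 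Combining the two displays yields~(\ref{Eq_P_9906_LoretzCtgMs}), so in particular the two spaces coincide as sets.

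The only real subtlety is the rearrangement step, and this is a classical fact (an instance of Hardy's rearrangement inequality for sequences). I would include it explicitly via a finite Abel summation, noting that replacing $\xi_{\sm (n)}$ and $\xi_{\sm (m)}$ by their decreasingly ordered pair for $n < m$ does not decrease the weighted sum, because $\bt_n \geq \bt_m$. Everything else is direct calculation, and no properties of $G$ beyond countability are used.
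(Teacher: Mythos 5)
Your proof is correct, but it takes a different route from the paper. The paper disposes of the identity~(\ref{Eq_P_9906_LoretzCtgMs}) by citing Example 1.4.8 of~\cite{Grfs}, which gives the formula for nonnegative finitely supported sequences, and then extends to all of $l^{p, r} (G)$ and $d^{\bt, r} (G)$ by observing that both norms depend only on $| \xi |$ and that finitely supported functions are dense in both spaces (Theorem 1.4.13 of~\cite{Grfs} for the Lorentz side). You instead compute directly for a general $\xi$: you identify the supremum defining $\| \xi \|_{\bt, r}$ as the sum over the decreasing rearrangement via Hardy's rearrangement inequality (valid because $\bt$ is nonincreasing), observe that $\xi^*$ is the step function taking the value $| \xi_{\sm_0 (n)} |$ on $[n - 1, n)$, and evaluate $\int_{n - 1}^{n} \ld^{r/p - 1} \, d \ld = (p/r) \bt_n$ term by term. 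Your approach is more self-contained and avoids the (mild) subtlety of transporting a norm identity from a common dense subspace to two a priori different completions; the paper's approach is shorter at the cost of two external citations. Your verification of the conditions on~$\bt$ (concavity of $t \mapsto t^{r/p}$, the mean value theorem, and the telescoping sum) matches what the paper leaves as ``easy.'' The one point you should make fully explicit is the parenthetical claim that $\| \xi \|_{\bt, r} < \I$ forces the level sets $\{ g \colon | \xi_g | > \af \}$ to be finite: this follows because an infinite level set would let you choose a bijection $\sm$ hitting it on a subsequence of density $1/2$, and since $\bt$ is nonincreasing with divergent sum, $\sum_{n \ \mathrm{odd}} \bt_n = \I$, making the supremum infinite; the same observation shows both norms are simultaneously infinite when no decreasing rearrangement exists, which is what gives the set equality $d^{\bt, r} (G) = l^{p, r} (G)$ in both directions.
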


\begin{proof}
That $\bt$ satisfies
the conditions in Definition~\ref{D_9226_LorentzGroupSpace}
is easy.

By Example 1.4.8 of~\cite{Grfs},
the equation~(\ref{Eq_P_9906_LoretzCtgMs})
holds for any $\xi \colon \N \to [0, \I)$ with finite support.
Since both $\| \xi \|_{p, r}$ and $(p / r)^{1 / r} \| \xi \|_{\bt, r}$
depend only on the function $| \xi |$,
it follows that (\ref{Eq_P_9906_LoretzCtgMs})
holds for any $\xi \colon \N \to \C$ with finite support.
Such functions are dense in $l^{p, r} (G)$
by Theorem 1.4.13 of~\cite{Grfs},
and are easily seen to be dense in $d^{\bt, p} (G)$.
The result follows.
\end{proof}

As far as we know,
if $1 < p < r < \I$ then $l^{p, r} (\N)$
is not a Lorentz sequence space as defined in~\cite{LndTzf1}.

\begin{thm}\label{T_9906_LorAlg_Powers}
Let $G$ be a countable group,
taken with the discrete topology,
let $p \in (1, \I)$, and let $r \in (1, p)$.
Let $\bt$ be as in Proposition~\ref{P_9906_LoretzCtgMs},
let
$\bigl( F^{\bt, r}_{\mathrm{r}} (G),
  \, w_{\bt, r}, \, \ta_{\bt, r} \bigr)$
be as in Proposition~\ref{P_9226_LorentzIfRedGpAlg},
and let $( F^{p}_{\mathrm{r}} (G), \, w_p, \, \ta_p)$
be as in Example~\ref{Ex_9217_CStar}.
\begin{enumerate}
%
\item\label{Item_T_9906_LorAlg_Powers}
If $( F^{p}_{\mathrm{r}} (G), \, w_p, \, \ta_p)$
has the Powers property,
then
$\bigl( F^{\bt, r}_{\mathrm{r}} (G),
  \, w_{\bt, r}, \, \ta_{\bt, r} \bigr)$
has the Powers property.
\item\label{Item_T_9906_LorAlg_OneElt}
Let $g \in G$,
and suppose that $( F^{p}_{\mathrm{r}} (G), \, w_p, \, \ta_p)$
has the $g$-Powers property.
Then
$\bigl( F^{\bt, r}_{\mathrm{r}} (G),
  \, w_{\bt, r}, \, \ta_{\bt, r} \bigr)$
has the $g$-Powers property.
\end{enumerate}
\end{thm}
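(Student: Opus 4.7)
The plan is to transfer the (single-element) Powers property from $l^p(G)$ to $l^{p, r}(G)$, and hence to $d^{\bt, r}(G)$, by a two-step interpolation argument, in the same spirit as the proofs of \Prp{P_8Y07_HasPowers} and \Thm{T_9905_OrlAlg}. The first observation, which avoids any distinction between $d^{\bt, r}(G)$ and $l^{p, r}(G)$, is that \Prp{P_9906_LoretzCtgMs} makes their norms proportional (the operator norms of any $T \in L(d^{\bt, r}(G)) = L(l^{p, r}(G))$ therefore coincide). So it suffices to produce the desired convex combinations with small operator norm on $l^{p, r}(G)$.

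For part~(\ref{Item_T_9906_LorAlg_Powers}), I would fix a finite set $S \S G \SM \{ 1 \}$ and $\ep > 0$, and would then reduce the problem to the following: produce $n \in \N$ and $h_1, \ldots, h_n \in G$ such that for every $g \in S$, the operator
\[
T_g = \frac{1}{n} \sum_{j = 1}^n w (h_j g h_j^{-1})
\]
has small norm on $l^{p, r}(G)$. Using the hypothesis, I can make $\| T_g \|_{L(l^p)} < \dt$ for some small $\dt > 0$ to be chosen. Since each $w(k)$ is an isometry on $l^s(G)$ for every $s \in [1, \I]$, we trivially have $\| T_g \|_{L(l^s)} \leq 1$ for every $s$. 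Fix exponents $p_0, p_1$ with $1 < p_0 < p < p_1 < \I$. By Riesz-Thorin (as used in \Prp{P_8Y07_HasPowers}), interpolating between $l^1$ and $l^p$ gives $\| T_g \|_{L(l^{p_0})} \leq \dt^{\af_0}$ for some $\af_0 \in (0, 1)$ depending only on $p_0$ and~$p$, and interpolating between $l^p$ and $l^{\I}$ gives $\| T_g \|_{L(l^{p_1})} \leq \dt^{\af_1}$ for some $\af_1 \in (0, 1)$.

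With small operator norms on both $l^{p_0}$ and $l^{p_1}$ in hand, I would then apply a standard real-interpolation theorem (Marcinkiewicz in the form of Theorem~5.3.2 of Bergh--L\"ofstr\"om, or Theorem 1.4.19 of~\cite{Grfs}), which asserts that for $\te \in (0, 1)$ determined by $\frac{1}{p} = \frac{1 - \te}{p_0} + \frac{\te}{p_1}$ there is a constant $C$ (depending only on $p_0, p_1, p, r$) with
\[
\| T_g \|_{L (l^{p, r})}
 \leq C \| T_g \|_{L (l^{p_0})}^{1 - \te}
        \| T_g \|_{L (l^{p_1})}^{\te}
 \leq C \dt^{\af_0 (1 - \te) + \af_1 \te}.
\]
The exponent of $\dt$ is strictly positive and independent of $g \in S$, so choosing $\dt$ small enough at the outset forces this quantity below $\ep$ simultaneously for all $g \in S$, as required. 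Part~(\ref{Item_T_9906_LorAlg_OneElt}) is obtained by running the same argument with the single element $g$ in place of the set~$S$.

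The main obstacle, such as it is, is purely bookkeeping: one must state the real-interpolation inequality for bounded linear operators on $l^{p, r}$ with an explicit constant depending only on the exponents, and check that passing from the operator norm on $l^{p, r}(G)$ to the operator norm on $d^{\bt, r}(G)$ costs nothing (which follows from \Prp{P_9906_LoretzCtgMs}). There is no genuine difficulty, since $r > 1$ makes $\| \cdot \|_{p, r}$ equivalent to a genuine norm, and the operators under consideration are linear and bounded on every $l^s(G)$, so the standard interpolation theorems apply without any sublinearity or quasilinearity issues.
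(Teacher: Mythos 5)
Your proposal is correct and follows essentially the same route as the paper: identify $d^{\bt, r} (G)$ with $l^{p, r} (G)$ via Proposition~\ref{P_9906_LoretzCtgMs}, and apply the real interpolation theorem (Theorem 1.4.19 of~\cite{Grfs}) between two spaces $l^{p_0}$ and $l^{p_1}$ with $\frac{1 - \te}{p_0} + \frac{\te}{p_1} = \frac{1}{p}$. You are in fact slightly more explicit than the paper at one point, namely in spelling out the preliminary Riesz--Thorin step that converts the small $l^p$ operator norm from the Powers hypothesis into small operator norms at the interpolation endpoints $p_0$ and $p_1$.
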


\begin{proof}
Let $F$ denote the vector space of all functions from $\N$ to~$\C$
with finite support,
which we regard as a subspace of $l^{s, t} (\N)$
for all $s, t \in (0, \I)$.
Set
\[
p_0 = \frac{p + r}{2},
\qquad
p_1 = \frac{p (p + r)}{2 r},
\andeqn
\te = \frac{1}{2}.
\]
Then
\[
r < p_0,
\qquad
r < p_1,
\andeqn
\frac{1 - \te}{p_0} + \frac{\te}{p_1} = \frac{1}{p}.
\]

We claim that there is a constant~$R$
such that whenever
$T \colon F \to F$ is linear
and $C_0, C_1 \in [0, \I)$ are constants such that
$\| T \xi \|_{p_j} \leq C_{j} \| \xi \|_{p_j}$
for all $\xi \in F$ and for $j \in \{ 0, 1 \}$,
then
$\| T \xi \|_{p, r} \leq R C_{0}^{1 - \te} C_{1}^{\te} \| \xi \|_{p, r}$
for all $\xi \in F$.
We use Theorem 1.4.19 of~\cite{Grfs}
with $(X, \mu)$ taken to be $\N$ with counting measure.
This theorem applies because,
by Proposition 1.1.6 of~\cite{Grfs},
for any $s \in (0, \I)$ and any $\xi \in L^s (X, \mu)$
we have $\| \xi \|_{s, \I} \leq \| \xi \|_s$.
So, in Theorem 1.4.19 of~\cite{Grfs},
take $R = C_{*} (p_0, p_0, p_1, p_1, 1, r, \te)$.
(The entry~$1$
comes from linearity of~$T$.
The constant~$K$ in~\cite{Grfs}
is from the definition of quasilinearity.)

Given the claim,
the proof of~(\ref{Item_T_9906_LorAlg_Powers}) is the same as
the third paragraph of the proof of Theorem~\ref{T_9905_OrlAlg},
using the previous paragraph
in place of Corollary~\ref{C_9904_InterpForUse}.
The proof of~(\ref{Item_T_9906_LorAlg_OneElt})
is, as before,  essentially the same as the proof
of~(\ref{Item_T_9906_LorAlg_Powers}).
\end{proof}

As a corollary, we get the following theorem.

\begin{thm}\label{T_9906_LorSimplToSimple}
Let $G$ be a countable group,
taken with the discrete topology,
let $p \in (1, \I)$, and let $r \in (1, p)$.
Let $\bt$ be as in Proposition~\ref{P_9906_LoretzCtgMs},
and let $F^{\bt, r}_{\mathrm{r}} (G)$
be as in Definition~\ref{D_9226_LorentzGroupSpace}.
Then:
\begin{enumerate}
%
\item\label{Item_9906_LorSimplToSimple}
If $C^*_{\mathrm{r}} (G)$ is simple
then $F^{\bt, r}_{\mathrm{r}} (G)$ is simple.
\item\label{Item_9906_LorUniqToUniq}
If $C^*_{\mathrm{r}} (G)$ has a unique tracial state,
then $F^{\bt, r}_{\mathrm{r}} (G)$ has a unique unital trace.
\end{enumerate}
\end{thm}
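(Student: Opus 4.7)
The plan is to treat this as a direct corollary combining Theorem~\ref{T_9906_LorAlg_Powers} with the already-established simplicity and unique trace criteria (Proposition~\ref{L_6Z16_PPImpSimple} and Corollary~\ref{AllgPowers}), together with the transfer of the Powers / $g$-Powers property from $C^*_{\mathrm{r}}(G)$ to $F^p_{\mathrm{r}}(G)$ proved in Propositions~\ref{P_8Y07_HasPowers} and~\ref{P_8Y07_HasgPowers}. The role of Theorem~\ref{T_9906_LorAlg_Powers} is to serve as the second interpolation step: it lets us move from $F^p_{\mathrm{r}}(G)$ (where the needed Powers-type properties are already known) to $F^{\bt, r}_{\mathrm{r}}(G)$ via the Lorentz interpolation theorem (Theorem 1.4.19 of~\cite{Grfs}).

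More concretely, for part~(\ref{Item_9906_LorSimplToSimple}), I would first apply Proposition~\ref{P_8Y07_HasPowers}(\ref{Item_8Y07_Fpr}), with the fixed $p \in (1, \I)$ of the hypothesis, to conclude that $(F^p_{\mathrm{r}}(G), w_p, \ta_p)$ has the Powers property. Next I would feed this into Theorem \ref{T_9906_LorAlg_Powers}(\ref{Item_T_9906_LorAlg_Powers}) to transfer the Powers property to $(F^{\bt, r}_{\mathrm{r}}(G), w_{\bt, r}, \ta_{\bt, r})$. Since Proposition~\ref{P_9226_LorentzIfRedGpAlg} already guarantees that this triple is a reduced group Banach algebra for~$G$, Proposition~\ref{L_6Z16_PPImpSimple} then yields simplicity of $F^{\bt, r}_{\mathrm{r}}(G)$.

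For part~(\ref{Item_9906_LorUniqToUniq}) the argument is parallel but uses $g$-Powers at every~$g$. Given that $C^*_{\mathrm{r}}(G)$ has a unique tracial state, Proposition~\ref{P_8Y07_HasgPowers}(\ref{Item_8Y07_HasgPowers_Fpr}) provides the $g$-Powers property for $(F^p_{\mathrm{r}}(G), w_p, \ta_p)$ for every $g \in G \SM \{1\}$. For each such~$g$, Theorem \ref{T_9906_LorAlg_Powers}(\ref{Item_T_9906_LorAlg_OneElt}) propagates the $g$-Powers property to $(F^{\bt, r}_{\mathrm{r}}(G), w_{\bt, r}, \ta_{\bt, r})$. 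Applying Corollary~\ref{AllgPowers} (again using Proposition~\ref{P_9226_LorentzIfRedGpAlg} to justify the reduced group Banach algebra hypothesis) then forces every continuous tracial linear functional on $F^{\bt, r}_{\mathrm{r}}(G)$ to be a scalar multiple of $\ta_{\bt, r}$, so the unital trace is unique.

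There is essentially no obstacle: all of the real work has been done earlier in the paper. The only minor thing worth checking is that the choice of $p \in (1, \I)$ used here is allowed to be exactly the one appearing in the hypothesis of Theorem~\ref{T_9906_LorAlg_Powers} (so that the Lorentz interpolation theorem of Grafakos is applied to an $L^p$-endpoint whose Powers property we actually have in hand), and that the reduced-group-Banach-algebra hypotheses needed by Proposition~\ref{L_6Z16_PPImpSimple} and Corollary~\ref{AllgPowers} are precisely those supplied by Proposition~\ref{P_9226_LorentzIfRedGpAlg}. Both are immediate.
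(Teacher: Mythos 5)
Your proposal is correct and is essentially identical to the paper's own proof: part~(\ref{Item_9906_LorSimplToSimple}) combines Proposition \ref{P_8Y07_HasPowers}(\ref{Item_8Y07_Fpr}), Theorem \ref{T_9906_LorAlg_Powers}(\ref{Item_T_9906_LorAlg_Powers}), and Proposition~\ref{L_6Z16_PPImpSimple}, while part~(\ref{Item_9906_LorUniqToUniq}) combines Proposition \ref{P_8Y07_HasgPowers}(\ref{Item_8Y07_HasgPowers_Fpr}), Theorem \ref{T_9906_LorAlg_Powers}(\ref{Item_T_9906_LorAlg_OneElt}), and Corollary~\ref{AllgPowers}, with Proposition~\ref{P_9226_LorentzIfRedGpAlg} supplying the reduced group Banach algebra hypothesis exactly as you note.
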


\begin{proof}
For~(\ref{Item_9906_LorSimplToSimple}),
combine
Proposition \ref{P_8Y07_HasPowers}(\ref{Item_8Y07_Fpr}),
Theorem \ref{T_9906_LorAlg_Powers}(\ref{Item_T_9906_LorAlg_Powers}),
and Proposition~\ref{L_6Z16_PPImpSimple}.
For~(\ref{Item_9906_LorUniqToUniq}),
combine
Proposition \ref{P_8Y07_HasgPowers}(\ref{Item_8Y07_HasgPowers_Fpr}),
Theorem \ref{T_9906_LorAlg_Powers}(\ref{Item_T_9906_LorAlg_OneElt}),
and Corollary~\ref{AllgPowers}.
\end{proof}

\end{document}